\definecolor{coprochroma}{RGB}{100,84,3}
\theoremstyle{plain}
\newtheorem{thm}{Theorem}[section]
\theoremstyle{definition}
\newtheorem{rem}[thm]{Remark}
\newtheorem{lem}[thm]{Lemma}
\newtheorem{prop}[thm]{Proposition}
\newtheorem{hyp}[thm]{Hypothesis}
\newcommand{\dd}[1]{\mathrm{d}\mathrm{#1}}
\newcommand{\stkout}[1]{\ifmmode\text{\sout{\ensuremath{#1}}}\else\sout{#1}\fi}
\newcommand{\HH}{{\mathbb{H}^2}}
\newcommand{\HHD}{{\mathbb{H}^2_D}}
\newcommand{\RR}{\mathbb{R}}
\newcommand{\NN}{\mathbb{N}}
\newcommand{\Z}{\mathbb{Z}}
\newcommand{\x}{\mathbf{x}}
\newcommand{\y}{\mathbf{y}}
\newcommand{\CC}{\mathcal{C}}
\newcommand{\nord}{\mathcal{N}}
\newcommand{\sud}{\mathcal{S}}
\newcommand{\est}{\mathcal{E}}
\newcommand{\vest}{\mathcal{W}}
\newcommand{\punct}{\mathcal{P}}
\numberwithin{equation}{section}
\title{
Discrete Laplacians on the hyperbolic space -- a comparative study
}
\author[1,2,\thanks{E-mail: \texttt{mihai.bucataru@fmi.unibuc.ro;} \href{https://orcid.org/0000-0002-6503-2084}{ORCID ID: 0000-0002-6503-2084}}]
{Mihai Bucataru}
\author[1,3,\thanks{Corresponding author. E-mails: \texttt{dmanea28@gmail.com}, \texttt{dmanea@imar.ro}; \href{https://orcid.org/0000-0003-4085-226X}{ORCID ID: 0000-0003-4085-226X}. D. M. was partially supported by CNCS-UEFISCDI Romania, Grant no. 0794/2020
``Spectral Methods in Hyperbolic Geometry'' PN-III-P4-ID-PCE-2020-0794 and by the PNRR-III-C9-2023-I8 grant
CF 149/31.07.2023 ``Conformal Aspects of Geometry and Dynamics''.}]
{Dragoș Manea}
\affil[1]{\small Department of Mathematics, Faculty of Mathematics and Computer Science, University of Bucharest, 14~Academiei, 010014~Bucharest, Romania}
\affil[2]{\small ``Gheorghe Mihoc~--~Caius Iacob" Institute of Mathematical Statistics and Applied Mathematics of the Romanian Academy, 13~Calea 13 Septembrie, 050711~Bucharest, Romania}
\affil[3]{\small ``Simion Stoilow" Institute of Mathematics of the Romanian Academy, 21~Calea Grivi\c tei, 010702~Bucharest, Romania}
\date{}
\begin{document}
\maketitle

\begin{abstract}\noindent

This paper is concerned with the construction of discrete counterparts of the Laplace-Beltrami operator on Riemannian manifolds that can be effectively used in the numerical solution of partial differential equations. Since existing constructions often lack rigorous convergence guarantees or imply a significant computational effort, we focus on designing operators that are both computationally feasible and supported by convergence results.

We consider as a starting point the two-dimensional hyperbolic space $\mathbb{H}^2$, one of the simplest non-Euclidean settings, and develop two variants of discrete finite-difference operator tailored to this constant negatively curved space, both serving as approximations to the (continuous) Laplace-Beltrami operator within the $\mathrm{L}^2$ framework. 
    
    We prove that the discrete heat equation 
    associated with both operators mentioned above exhibits stability and converges towards the continuous heat-Beltrami Cauchy problem on $\mathbb{H}^2$.
    Moreover, using techniques inspired from the sharp analysis of discrete functional inequalities, we prove that the solutions of the discrete heat equations corresponding to both variants of discrete Laplacian exhibit an exponential decay asymptotically equal to the one induced by the Poincar\'e inequality on $\HH$. 
    
    Eventually, we illustrate that a discrete Laplacian specifically designed for the geometry of the hyperbolic space yields a more precise approximation and offers advantages from both theoretical and computational perspectives. Furthermore, this discrete operator can be effectively generalized to the three-dimensional hyperbolic space.
\end{abstract}
\hspace*{0.8cm} \textbf{2020 Mathematics Subject Classification:} 65M06, 65M15, 58J35, 58J60, 35K08. \\[10pt]
\hspace*{0.8cm} \textbf{Keywords:} Finite difference method (FDM), PDEs on Riemannian manifolds,\\
\hspace*{0.8cm} Numerical methods on manifolds, Geometry-tailored numerical grid, Discrete functional inequalities.
\section{Introduction}
In this work, we aim to construct a discrete Laplacian on hyperbolic spaces that is explicit, computationally feasible, and provably convergent when used in PDE solvers. As a first step, we focus on one of the simplest non-trivial (i.e., non-Euclidean) settings, namely the two-dimensional hyperbolic space $\mathbb{H}^2$, while designing the method in such a way that it naturally extends to higher dimensional hyperbolic spaces.

Our goal is therefore not only to define a discrete Laplacian on the curved space $\mathbb{H}^2$, but to ensure that this operator fulfills its fundamental role: to accurately approximate partially diferential equations, most notably the heat equation (possibly with source terms). That is, we seek a discrete operator that leads to a numerical scheme converging to the solution of the heat-Beltrami equation on hyperbolic space:
\begin{eqnarray} \label{eq:heat-hyperbolic-source}
\left\{
\begin{array}{ll}
\partial_t u(t, \mathbf{x}) = \Delta_g u(t,\mathbf{x})+f(t,\x)\,,
& t \in (0, T],\; \mathbf{x} \in \HH\,, \\[4pt]
\phantom{\partial_t} u(0, \mathbf{x}) = u_0(\mathbf{x})\,,
& \mathbf{x} \in \HH\,,
\end{array}
\right.
\end{eqnarray}
where $T>0$ is any fixed time, $u_0 \in {\rm L}^2(\HH)$ is the initial data, $\Delta_g$ is the Laplace-Betrami operator on $\HH$ and $f\in C([0,T],\mathrm{L}^2(\HH))$ is the source term.
Having defined our purpose, let us see what are the main ingredients used to build a discrete Laplacian:
\begin{enumerate}[i)]
    \item \label{discrete-lapl-first-aim}First, we need a grid, that is a discrete set of points well arranged in space, somehow evenly dispersed in $\HH$, whilst bearing in mind that it must be efficiently implemented on a computer. Notice that the curvature may influence the distribution of the grid points.
    \item Then, we need a means to transfer information between the entire continuous space and the discrete grid. Since we focus on the ${\rm L}^2$ settings, the transfer procedure should be compatible with both discrete and continuous ${\rm L}^2$ norms.
    \item Once these tasks are accomplished, we can fashion our discrete Laplacian. Much like in the Euclidean plane, it emerges as a linear combination of the values of our function in five adjacent points, their weights meticulously mirroring the even diffusion of heat across the curved space we study.
    \item Subsequently, we justify the stability of both semi-discrete and fully discrete numerical schemes corresponding to the discrete Laplacian and we assess the order of error relative to the continuous solution of the heat-Beltrami problem with source.
    \item \label{discrete-lapl-last-aim} Finally, our discrete framework grants us an additional perk -- the restoration of the ${\rm L}^2$ exponential stability akin to the continuous homogeneous heat problem. To achieve this, we prove a discrete Poincaré inequality tailored for our Laplace operator. This inequality resembles the Poincaré inequality in the entire space $\HH$, together with its optimal constant derived from the inherent negative curvature.
\end{enumerate}

A good starting point is to focus on one of the models of the hyperbolic space that is most suited to a finite-difference approach. Among the isometrically equivalent models briefly summarised in \cite{loustau2021hyperbolicgeometry} we have chosen the Poincar\' e upper half-plane $\HH \simeq \RR^2_+ \coloneqq \{\mathbf{x} = (x_1, x_2) |\; x_1, x_2 \in \mathbb{R},\; x_2 > 0\}$ , in which the hyperbolic metric reads
\begin{equation}\label{eq:cont01}
    g_\x\coloneqq \dfrac{(dx_1) ^ 2 + (dx_2) ^ 2}{x_2 ^ 2}.
\end{equation}

Given that the model's support set in this instance is a subset of $\RR^2$, the initial straightforward yet effective approach involves employing the standard uniform Euclidean grid confined to the upper half-plane. However, a pertinent question naturally arises: \textit{Could an alternative approach that significantly considers the geometry of the space yield more precise results while concurrently enhancing resource utilisation?}

In the present paper, we aim to address the above question by formulating two versions of the discrete Laplacian. One relies on the uniform Euclidean grid, while the other is tailored to the specificity of curved space. Both fit into the framework \ref{discrete-lapl-first-aim})-\ref{discrete-lapl-last-aim}), however, as anticipated, the latter variant yields more precise results, all the while optimising memory usage for grid construction and associated functions.

Our approach finds its place in a vast series of attempts to numerically approximate the solutions of differential equations on Riemannian manifolds. Starting from zero-order equations \cite{AlvarezBolte2008}, going through ODEs describing curves on manifolds \cite{Fiori2017,BarettGarke2019,GarkeNurnberg2021,FioriCervigni2022} and arriving to dynamical systems and other PDEs \cite{Holst2001,AmorimBenArtzi2005,Giesselmann2009}, various methods were used to tackle this approximation problem. Especially for ODEs, one of the most preferred method is to compute the solution iteratively using normal geodesic coordinates around the current point, whilst, in the case of PDEs, finite element \cite{Holst2001}, finite volume \cite{AmorimBenArtzi2005,Giesselmann2009} and Monte Carlo \cite{CruzeiroMalliavin2006} are employed. However, none of the methods enumerated above does literally construct a discrete counterpart of the differential operator they analyse.

While several approaches to discrete Laplacians on Riemannian manifolds exist in the literature, they present important limitations. For instance, in \cite{Jiang2024}, the discrete operator is constructed using weights computed locally at each point, leading to increased computational cost. Moreover, convergence is established only on compact manifolds and holds with a certain probability, as it relies on randomly sampled point clouds. The method further requires a sufficiently large number of nearest neighbours for the stencil, chosen to satisfy an a posteriori condition. By contrast, the approach developed in this paper yields a globally defined discrete Laplacian with deterministic structure, which does not rely on random sampling or adaptive neighbour selection. Most importantly, we establish rigorous convergence results for the associated heat equation on the hyperbolic space.

On the other hand, we mention \cite{BuragoIvanov2014} which recovers the eigenvalues of the Laplace-Beltrami operator on a manifold by constructing a discrete operator on an embedded graph and \cite{Izmestiev2025}, where a definition of discrete Laplacians is provided for general hyperbolic surfaces. However, key numerical aspects -- such as order of accuracy, stability, and convergence in the context of PDEs -- are not addressed.

Another category of scientific literature our approach aims to extend refers to integral inequalities on Riemannian manifolds, especially negatively curved ones. As seen in the point \ref{discrete-lapl-last-aim}) above, integral inequalities serve, among others, to derive well-posedness, stability and decay properties of PDEs and also to estimate eigenvalues of differential operators. Far from pretending an exhaustive survey, we mention the works \cite{MugelliTalenti1997,MugelliTalenti1998,sharpPoincare2018,NgoNguyen2019} pertaining to sharp Sobolev and Poincar\' e - type inequalities on the hyperbolic space, \cite{BerchioGanguly2020} for the Hardy inequality on the same space and \cite{Kristaly2022} for estimates of the first eigenvalue of the Laplacian in negatively curved spaces.

The literature pertaining to the heat equation on the hyperbolic space of any dimension is also well-developed. The corresponding heat kernel was computed and estimated uniformly in \cite{DaviesMandouvalos,GrigoryanHeatKernelOnHyperbolic} and the asymptotic behaviour and the existence of long-time profiles for the heat equation on the hyperbolic space was analysed in \cite{Vazquez2022}. We refer to \cite{AnkerPapageorgiou2023} for the study of the long-time asymptotic behaviour of the heat equation in a more general setting, that of symmetric spaces. Moreover, the approximation of the solutions of the heat equation on the hyperbolic space with a non-local problem was studied in \cite{BandleGonzalez2018}, whereas the papers \cite{Banica2007} and \cite{Tataru2001} are dedicated to the Schr\" odinger and wave equations on the hyperbolic space, respectively. 

One instrument that plays an important role role in the study of the Laplace-Beltrami operator on the hyperbolic space is the so-called \emph{Fourier-Helgason transform}, that can be defined in the more general setting of symmetric spaces (see, for example \cite{Helgason2008,Terras2013}). This integral transform was successfully employed, for example, in the aforementioned works \cite{Tataru2001,Banica2007, BandleGonzalez2018}, whereas in \cite{Pesenson2009} the authors introduced a discrete version of it
and managed to build approximations of ${\rm L}^2$ functions with discrete counterparts. However, due to the rather cumbersome form of this transform, we prefer to use more direct methods in the construction of our discrete Laplace operators.

Another aspect worth mentioning is that, since in the present paper we aim to solve the heat equation posed in the whole space $\HH$ on a computer with limited resources -- finite memory and processing speed -- we need to restrict ourselves to a bounded subset of $\HH$, while taking care of the boundary conditions so that the reduced problem can still approximate the continuous problem on the entire space. In this sense, we simultaneously employ two types of refinement of the approximation: along with the reduction of the parameter $h$ accounting for the step size of the finite difference grid itself, we enlarge the bounded domain
\begin{equation}\label{Intro:eq:cine-PLM-i-HD}
    \HHD\coloneqq [-D,D]\times \left [\frac 1 D,D\right] \subset \HH\,,
\end{equation}
on which we pose the actual discrete heat equation. In particular, in our analysis, we choose the variable $D$, describing the size of the discrete domain, to increase polynomially with respect to $\frac 1 h$:
\begin{equation}\label{Intro:eq:cine-PLM-i-D}
D_{h,\gamma,\zeta}\coloneqq \zeta h^{-\gamma}, \text{ with } \zeta > 2\,, \gamma >0.
\end{equation}
Fortunately, since the heat equation in $\HH$ decays quickly enough as the spatial variable tends to infinity, we can impose zero Dirichlet \emph{artificial boundary conditions} to the reduced discrete problem and obtain, for a properly chosen source term, convergence to the solution of \eqref{eq:heat-hyperbolic-source}. Refer to Section \ref{section:First-Discrete-Laplacian} for more details on the aforementioned spatial approximation. 

Furthermore, within each of the discrete frameworks defined by our discrete Laplace operators, we establish a Poincaré inequality that closely mirrors its continuous analogue on the hyperbolic space 
$\HH$, and we explicitly compute the associated optimal constant. These results not only reinforce the fidelity of our discrete model in capturing key geometric and analytic features of 
$\HH$, but also position our work within the literature on discrete functional inequalities with sharp constants (see, e.g., \cite{david2022DiscreteHardyLine,beci-yehuda,david-stampac,pescaru-beci}). In this way, our contribution creates a connection between the discrete numerical approximation of operators on negatively curved spaces and the rigorous theory of discrete inequalities with optimal constants.

The paper is organised as follows. In Section~\ref{sec:preliminarii} a brief introduction to the geometry of the hyperbolic space. Several properties regarding the heat equation associated to the Laplace-Beltrami operator on the hyperbolic space, such as the exponential stability and tail control, are presented in Section~\ref{sec:continuous-case}. Further on, the construction of the first and second discrete Laplacians are presented in Sections~\ref{sec:first-lapl} and~\ref{sec:second-lapl}, respectively, together with consistency estimates of the order $\mathcal{O}(h^2)$. \\

The main results of the paper concern the convergence of order $\mathcal{O}(h^2)$ of the semi-discrete scheme associated with both variants of discrete Laplace operators defined in the aforementioned sections and the convergence of order $\mathcal{O}(h^2+\tau^2)$ of the corresponding fully-discrete Crank-Nicolson scheme. They can be consulted in Sections~\ref{sec:semi-discret} and \ref{sec:crac-stix}, respectively. This convergence order is sharp, as seen from the numerical experiments performed in Section~\ref{sec:numerics}, where we also extend the second discrete Laplacian to the 3D hyperbolic space $\mathbb{H}^3$ and use it to solve the stationary heat equation. We finish our paper by drawing some conclusions and suggesting further research directions in Section~\ref{sec:conclusion}.

\section{Preliminaries on the hyperbolic space}\label{sec:preliminarii}

We start our preliminary section with a basic introduction into the geometry of the $n$-dimensional hyperbolic space $\mathbb{H}^n$, together with the differential operators that are necessary for our study. From a geometric point of view, the hyperbolic space is defined as the unique $n$-dimensional, complete, simply connected Riemannian manifold with constant sectional curvature equal to $-1$. This abstract definition makes sense, since all the manifolds satisfying the aforementioned properties are isometric. Therefore, it is enough to work on a model of the hyperbolic space, and the Riemannian properties that we obtain can be transferred to any other model through isometries.

In the sequel, we will drive our attention to the seemingly most appropriate model of the hyperbolic geometry for employing numerical methods, that is the half-space model, suggesting to the interested reader to consult \cite{loustau2021hyperbolicgeometry} for a survey of the most used models that exist in the literature. Thus, throughout this paper, $\mathbb{H}^n$ will denote the half-space model of $n$-dimensional hyperbolic geometry. Its support set is the upper half-space of $\RR^n$:
\[\mathbb{H}^n \simeq \RR^n_+\coloneqq\{\mathbf{x} = (x', x_n) |\; x'\in \RR^{n-1}, x_n > 0\}, \]
endowed with the metric:
\[g_\x\coloneqq \dfrac{(dx_1) ^ 2 + (dx_2) ^ 2+\cdots (dx_n)^2}{x_n ^ 2}.\]
We remark that, as opposed to the Euclidean metric on the half-space, the hyperbolic distances approach infinity near the base hyperplane $\partial \mathbb{H}^n\simeq \RR^{n-1}$ and decrease as $x_n$ goes to infinity. More precisely, one can compute the hyperbolic distance in this model in the following manner:
\begin{equation}\label{eq:hyp-distance}
d_{g}(\mathbf{x},\mathbf{y})= {\rm acosh}\left(1+\frac{\|\mathbf{x}-\mathbf{y}\|_e^2}{2x_n y_n}\right),
\end{equation}
where $\|\cdot\|_e$ denotes the Euclidean norm of a vector.
As an early disclosure, this behaviour of the hyperbolic distance towards the extremities of the $x_n$ axis will inspire the construction of our second grid (in Section \ref{sec:construction-second-lapl}), the one more tailored to this specific geometry.

Since it is a way of measuring distances, every Riemannian metric induces a measure (from a geometric point of view, a volume form) on the underlying manifold, together with differential operators, which are counterparts of the gradient, divergence and Laplacian operators on $\RR^n$. For a detailed discussion of those operators in a general setting, one could consult \cite[Section 2.1]{IgnatManeaMoroianu2024}. Here, we will only provide the form of the hyperbolic measure $\mu$:
\begin{align*}
\int_{\mathbb{H}^n} v(\x)\,\dd \mu(\x)=\int_{\RR^n_+} v(\x)\frac{1}{x_n^n} \,\dd \x,
\end{align*}
 and the differential operators on the particular model we are working on (following  \cite[Section 2.2]{IgnatManeaMoroianu2024}):
\begin{align*}
\nabla_g v = x_n^2 \nabla_e v, &&
 {\rm div}_g(Y)=x_n^n \,{\rm div}_e\left(\frac{1}{x_n^n} Y\right), && \Delta_g v = x_n^n\, {\rm div}_e \left(\frac{1}{x_n^{n-2}} \nabla_e v\right), 
\end{align*}
where the indices "$g$" and "$e$" indicate that the operator corresponds to the hyperbolic and Euclidean metrics, respectively. One should pay attention that we have chosen the sign of $\Delta_g$ according to the analysts' convention, for which the Laplacian is a non-positive operator.

The operator of interest in this study is the $2$-dimensional Laplace-Beltrami operator on the half-plane model of $\HH$:
\begin{equation}\label{def:laplacianH2}
    \Delta_g v (\x)= x_2^2 (\partial^2_{x_1} v+ \partial^2_{x_2} v),\text{ for }\x=(x_1,x_2)\in \HH \simeq \RR^2_+.
\end{equation}

\section{The continuous case}
\label{sec:continuous-case}

The aim of this section is to emphasize some of the properties of the heat equation associated to the Laplace-Beltrami operator on $\HH$ that will be useful for its approximation with a finite-difference numerical scheme.
\begin{hyp}\label{hyp:biserica-Grivitei}  Throughout this paper, we will impose the following regularity for the initial datum $u_0$ and the source. For the initial datum, we assume:
\begin{equation}\label{eq:biserica-Grivitei}
u_0\in \mathcal{M}\coloneqq \left\{v\in C^6(\HH) : \|v\|_{\mathcal{M}}\coloneqq \sup_{w>0}\,\sum_{\alpha=0}^6 \left\| e^{ w\,d_g(\mathrm{O},\x)} |\nabla_g^\alpha v|_g \right\|_{{\rm L}^2(\HH)} <\infty\right\},
\end{equation}
where by $\nabla_g^\alpha$ we understand the Riemannian covariant derivative applied $\alpha$ times to the function $v$ and $|\cdot|_g$ stands for the extension of the metric $g$ to the space of $(0,\alpha)$-type tensors on $T\HH$. 

The point $\mathrm{O} \in \HH$ is an arbitrary fixed point, assumed for simplicity to be $\mathrm{O} \coloneqq (0,1)\in \HH$, as the set $\mathcal{M}$ is invariant to the choice of this point. We note that by ${\rm L}^2(\HH)$ we mean the Lebesgue Hilbert space corresponding to the measure $\mu$ induced by the metric $g$ on $\HH$.

For the source term, we assume that the norm defined above is continuous is time:
\begin{equation}
f\in C([0,T],\mathcal{M}),\, \text{ for a fixed time }T>0.
\end{equation}

\end{hyp}

\begin{rem}
Since in this paper we work on the half-plane model of $\HH$, it is useful to note that Hypothesis \ref{hyp:biserica-Grivitei} is equivalent to:
\[\sup_{w>0}\sum_{\substack{\alpha,\beta\in \NN\\ \alpha+\beta\leq 6}}\left\| e^{w\, d_g(\mathrm{O},\x)} \partial_{x_1}^\alpha \partial_{x_2}^\beta v(\x)\right\|_{{\rm L}^2(\HH)} <\infty.\]
\end{rem}

\begin{rem}
The $C^6(\HH)$ regularity in Hypothesis \ref{hyp:biserica-Grivitei} is only used to prove the order of convergence $\mathcal{O}(h^2+\tau^2)$ for the discrete Crank-Nicolson scheme in Theorem \ref{thm:convergence-theta}. For all the other convergence results, it is sufficient to impose $C^4(\HH)$ regularity on $u_0$ and $f$. 

Furthermore, the exponential weighting of the $\mathrm{L}^2$ norm of the derivatives in $\|\cdot\|_{\mathcal{M}}$ is essential in order to obtain the tail decay estimates in Section \ref{sec:kernel_estimates}, which in turn lead to the approximation of the heat problem on the whole space $\HH$ with a numerical scheme on the bounded domain \eqref{Intro:eq:cine-PLM-i-HD}.
\end{rem}

\subsection{Tail control}

The following proposition describing the ${\rm L}^2$ tail control of functions in terms of weighted integrals as in \eqref{eq:biserica-Grivitei} will be used to estimate the error between the solution of the numerical scheme -- which is defined on the bounded domain $\HHD$ defined in \eqref{Intro:eq:cine-PLM-i-HD} -- and the solution of the continuous problem \eqref{eq:heat-hyperbolic-source}.
\begin{prop}\label{prop:tail-control}
Let $D \coloneqq D_{h, \gamma, \zeta}$ chosen as in~\eqref{Intro:eq:cine-PLM-i-D}, $\alpha \in \{0, 2\}$, $\beta \in \{2, 4\}$. Then, there exists a constant $C_{\gamma,\zeta}>0$ such that, for any $v\in {\rm L}^2(\HH)$ and any $h>0$,
\begin{equation}\label{eq:tail-control}
\frac{D^\alpha}{h^\beta}\|v\|_{{\rm L}^2\left(\HH\setminus\mathbb{H}^2_{D}\right)} \leq C_{\gamma,\zeta} \left\|e^{\left(2+\frac{4}{\gamma}\right) d_g(\mathrm{O},\x)} v(\x)\right\|_{{\rm L}^2(\HH)}.
\end{equation}
\end{prop}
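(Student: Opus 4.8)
The plan is to reduce the weighted $\mathrm{L}^2$ estimate on $\HH$ to a pointwise comparison between the algebraic prefactor $D^\alpha/h^\beta$ and the exponential weight evaluated on the complement $\HH\setminus\HHD$. The key geometric input is an explicit lower bound for the hyperbolic distance $d_g(\mathrm O,\x)$ when $\x=(x_1,x_2)\notin\HHD$, i.e. when either $|x_1|>D$, or $x_2>D$, or $x_2<1/D$. Using the distance formula \eqref{eq:hyp-distance} with $\mathrm O=(0,1)$, one gets $d_g(\mathrm O,\x)={\rm acosh}\!\left(1+\frac{x_1^2+(x_2-1)^2}{2x_2}\right)$, and in each of the three boundary regimes this is bounded below by something of order $\log D$. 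Concretely, I would show there is an absolute constant $c>0$ with $d_g(\mathrm O,\x)\ge c\log D$ for all $\x\in\HH\setminus\HHD$ (the worst case being $x_2\approx 1/D$ or $x_2\approx D$ with $x_1$ small, which already forces the argument of ${\rm acosh}$ to be $\gtrsim D$).

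Next I would exploit the hypothesis $D=D_{h,\gamma,\zeta}=\zeta h^{-\gamma}$, so that $h^{-1}=(D/\zeta)^{1/\gamma}$ and hence
\[
\frac{D^\alpha}{h^\beta}=\zeta^{-\beta/\gamma}\,D^{\alpha+\beta/\gamma}\le \zeta^{-\beta/\gamma}\,D^{2+4/\gamma},
\]
using $\alpha\le 2$ and $\beta\le 4$. Combining this with the distance bound, on $\HH\setminus\HHD$ we have $D^{2+4/\gamma}\le \bigl(e^{d_g(\mathrm O,\x)/c}\bigr)^{2+4/\gamma}=e^{\frac{1}{c}(2+4/\gamma)\,d_g(\mathrm O,\x)}$. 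To land exactly on the weight $e^{(2+4/\gamma)d_g(\mathrm O,\x)}$ stated in \eqref{eq:tail-control} I should be slightly more careful: I want $D^{2+4/\gamma}\le e^{(2+4/\gamma)d_g(\mathrm O,\x)}$, which is equivalent to $\log D\le d_g(\mathrm O,\x)$. So the real target of the geometric step is the clean inequality $d_g(\mathrm O,\x)\ge \log D$ on $\HH\setminus\HHD$ — and indeed from ${\rm acosh}(1+t)\ge \log(1+t)$ together with the fact that the argument is $\ge \tfrac{D}{2}-\tfrac12\ge$ (something $\ge D$ after adjusting constants, or absorbing the discrepancy into $C_{\gamma,\zeta}$ via $\zeta>2$), this holds. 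Any bounded multiplicative slack here is harmless because it is absorbed into $C_{\gamma,\zeta}$.

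Finally I would assemble the estimate:
\[
\frac{D^\alpha}{h^\beta}\|v\|_{\mathrm{L}^2(\HH\setminus\HHD)}
\le \zeta^{-\beta/\gamma}\Bigl\|D^{2+4/\gamma}\,v\Bigr\|_{\mathrm{L}^2(\HH\setminus\HHD)}
\le \zeta^{-\beta/\gamma}\Bigl\|e^{(2+\frac4\gamma)d_g(\mathrm O,\x)}v(\x)\Bigr\|_{\mathrm{L}^2(\HH\setminus\HHD)}
\le C_{\gamma,\zeta}\Bigl\|e^{(2+\frac4\gamma)d_g(\mathrm O,\x)}v(\x)\Bigr\|_{\mathrm{L}^2(\HH)},
\]
where the pointwise bound from the previous step is used on the set $\HH\setminus\HHD$ in the middle inequality, and the last step just enlarges the domain of integration (the integrand being nonnegative). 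The constant $C_{\gamma,\zeta}$ collects $\zeta^{-\beta/\gamma}$ over the finitely many admissible pairs $(\alpha,\beta)$ together with whatever bounded geometric slack appeared. I expect the only genuinely delicate point to be the geometric step: getting a distance lower bound on $\HH\setminus\HHD$ that is uniform over all three boundary pieces and sharp enough (i.e. $\gtrsim\log D$ with the right leading behaviour) so that it dominates $D^{2+4/\gamma}$; the rest is bookkeeping with the relation $D=\zeta h^{-\gamma}$ and monotonicity of the exponential. One should also double-check the edge case where $D$ is close to its minimal allowed size so that $\HHD$ is nonempty and the logarithmic bound is still valid, but since $\zeta>2$ this causes no trouble.
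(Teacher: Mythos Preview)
Your proposal is correct and follows essentially the same route as the paper: rewrite $\frac{D^\alpha}{h^\beta}=\zeta^{-\beta/\gamma}D^{\alpha+\beta/\gamma}$, establish the geometric bound $d_g(\mathrm O,\x)\ge \log(D/2)$ (equivalently $D\le 2e^{d_g(\mathrm O,\x)}$) on $\HH\setminus\HHD$ from the distance formula, and combine. The only cosmetic difference is that the paper keeps the exponent $\alpha+\beta/\gamma$ and passes to $2+4/\gamma$ at the end via monotonicity of $e^{w\,d_g}$ in $w\ge 0$, whereas you bound $D^{\alpha+\beta/\gamma}\le D^{2+4/\gamma}$ first (which tacitly needs $D\ge 1$); either ordering works.
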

\begin{proof}
    The particular choice of $D$ enables us to rewrite:
    \[\frac{D^\alpha}{h^\beta}=\zeta^{-\frac{\beta}{\gamma}}D^{\alpha+\frac{\beta}{\gamma}}.\]
   Further, using the formula \eqref{eq:hyp-distance} of distance function on $\HH$ we obtain that, if $\x\in \HH\setminus\mathbb{H}^2_{D}$, then $d_g(\mathrm{O},\x)\geq \log(D/2)$. In other words, 
   \[D \leq 2e^{d_g(\mathrm{O},\x)},\] 
   so we can estimate the left-hand side term in \eqref{eq:tail-control} as follows:
    \[
    \left(\frac{D^\alpha}{h^\beta}\right)^2 \int_{\HH\setminus\mathbb{H}^2_{D}} (v(\x))^2 \dd{\x} \leq 2^{2\left(\alpha+\frac{\beta}{\gamma}\right)}\zeta^{-\frac{2\beta}{\gamma}} \int_{\HH\setminus\mathbb{H}^2_{D}}e^{2\left(\alpha+\frac{\beta}{\gamma}\right) d_g(\mathrm{O},\x)} (v(\x))^2 \dd{\x}
    \]
   and obtain the conclusion.
\end{proof}

\subsection{Poincar\' e inequality and exponential stability}
\label{sec:poincare-continuous}

In contrast to the Euclidean spaces $\RR^n$, the negative curvature of the hyperbolic spaces induces a Poincar\' e-type inequality on the whole space, which, in turns, leads to the exponential decay of the solution of the homogeneous heat equation in ${\rm L}^2(\HH)$. In the particular case of the $2$-dimensional space $\HH$, the sharp Poincar\' e inequality reads as:

\begin{prop}[{\cite{sharpPoincare2018}}]
Let $v \in H^1(\HH)$. Then the following inequality holds true:
\begin{equation}\label{eq:poincare-continuous}
    \int_\HH |\nabla_g v|_g^2 \dd \mu \geq \frac{1}{4} \int_\HH |v|^2 \dd \mu
\end{equation}
and the constant $\frac{1}{4}$ cannot be improved.
\end{prop}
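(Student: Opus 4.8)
The plan is to prove the sharp Poincar\'e inequality \eqref{eq:poincare-continuous} by a change-of-variables that exploits the conformal structure of the half-plane model, followed by a one-dimensional ground-state substitution. First I would use the explicit form of the hyperbolic gradient and measure from Section~\ref{sec:preliminarii}: since $|\nabla_g v|_g^2 = x_2^2\,\|\nabla_e v\|_e^2$ and $\dd\mu = x_2^{-2}\,\dd x_1\,\dd x_2$, the left-hand side of \eqref{eq:poincare-continuous} becomes simply $\int_{\RR^2_+}\|\nabla_e v\|_e^2\,\dd x_1\,\dd x_2$, the \emph{Euclidean} Dirichlet energy on the half-plane, while the right-hand side is $\tfrac14\int_{\RR^2_+}|v|^2 x_2^{-2}\,\dd x_1\,\dd x_2$. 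So the claim reduces to the Hardy-type inequality $\int_{\RR^2_+}\|\nabla_e v\|_e^2\,\dd x_1\,\dd x_2 \ge \tfrac14\int_{\RR^2_+} |v|^2 x_2^{-2}\,\dd x_1\,\dd x_2$ with optimal constant $\tfrac14$.

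Next I would prove this Hardy inequality. Dropping the nonnegative $\partial_{x_1}$-contribution, it suffices to establish, for each fixed $x_1$, the one-dimensional inequality $\int_0^\infty |w'(x_2)|^2\,\dd x_2 \ge \tfrac14\int_0^\infty |w(x_2)|^2 x_2^{-2}\,\dd x_2$ for $w\in C_c^\infty(0,\infty)$ (and then extend by density), and integrate in $x_1$. This is the classical one-dimensional Hardy inequality with weight; the clean way to see it is the ground-state substitution $w(x_2) = x_2^{1/2}\,\phi(x_2)$, where $x_2^{1/2}$ is the formal solution of the associated Euler equation. A direct computation gives $|w'|^2 = \tfrac14 x_2^{-1}\phi^2 + x_2^{1/2}\phi\,\phi'/\,x_2^{1/2}\cdot\text{(cross term)} + x_2|\phi'|^2$; more precisely one finds $\int_0^\infty |w'|^2\,\dd x_2 = \tfrac14\int_0^\infty x_2^{-1}\phi^2\,\dd x_2 + \int_0^\infty x_2|\phi'|^2\,\dd x_2$ after integrating the cross term by parts and checking the boundary terms vanish for compactly supported $\phi$. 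Since $\int_0^\infty x_2|\phi'|^2\,\dd x_2 \ge 0$ and $x_2^{-1}\phi^2 = x_2^{-2}w^2$, the inequality with constant $\tfrac14$ follows immediately.

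For sharpness I would exhibit a minimising sequence: take $\phi$ approximating the constant function (which makes the remainder term $\int x_2|\phi'|^2$ small relative to the main term) — concretely, truncations of $x_2^{1/2}$ localised logarithmically, e.g.\ $w_\eps(x_2) = x_2^{1/2}\,\chi(\eps\log x_2)$ for a fixed cutoff $\chi$, letting $\eps \to 0$. One checks the ratio of the two sides tends to $\tfrac14$, so no smaller constant works even in one dimension, hence none works on $\HH$. Alternatively, one may simply cite \cite{sharpPoincare2018} for the sharpness, since the proposition is quoted from there; but including the short ground-state argument makes the exposition self-contained.

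The main obstacle is not any deep estimate — the reduction is essentially algebraic — but rather bookkeeping: justifying the density argument (that $C_c^\infty(\RR^2_+)$ suffices to conclude for all $v\in H^1(\HH)$, noting that $H^1(\HH)$ here means finite hyperbolic Dirichlet energy plus finite $\mathrm{L}^2(\mu)$ norm, which matches finite Euclidean Dirichlet energy on the half-plane), and verifying that the boundary terms in the ground-state integration by parts genuinely vanish — the behaviour near $x_2 = 0$ requires that $w^2/x_2 \to 0$, which holds for the dense class and then passes to the limit. Since the statement is explicitly attributed to \cite{sharpPoincare2018}, it is also entirely legitimate to give only a one-paragraph proof sketch along the above lines and defer the technical details to that reference.
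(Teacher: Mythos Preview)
The paper does not actually give its own proof of this proposition: it is stated with the attribution \cite{sharpPoincare2018} and immediately used, with no argument supplied. So there is nothing to compare your proposal against on the paper's side.

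That said, your argument is correct and, interestingly, mirrors exactly the strategy the paper uses for the \emph{discrete} Poincar\'e inequalities in Theorems~\ref{thm:first-laplacian-poincare} and~\ref{thm:lapl-2-poincare}: reduce to a one-dimensional weighted inequality in the $x_2$-variable via a ground-state identity (the continuous analogue of \eqref{eq:poincare-first-unidimnesional-e1} and \eqref{eq:lapl-2-poincare-unidim-e1}), then tensorise for sharpness. The only point where your sketch is a bit thin is the passage ``no smaller constant works even in one dimension, hence none works on $\HH$'': a one-dimensional minimiser $w_\eps(x_2)$ does not lie in $H^1(\HH)$, so you must multiply by a cutoff $\psi(x_1/R)$ and let $R\to\infty$ to kill the $\partial_{x_1}$-contribution relative to the main terms---precisely what the paper does in the discrete case with the factor $(1-|i|/m)$. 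Once that is made explicit, your proof is complete and self-contained, and would make a fine replacement for the bare citation.
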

A direct consequence of this inequality is the following exponential ${\rm L}^2$ stability estimate for the solutions of the homogeneous heat equation on $\HH$ (i.e., the case $f\equiv 0$ and $T=\infty$). We note that, since, by definition  (see, for example, \cite[Section 2.1]{IgnatManeaMoroianu2024}), the Laplace-Beltrami operator is a non-positive self-adjoint unbounded operator on ${\rm L}^2(\HH)$, the solution of the heat equation is given by a contractions semigroup in the sense of \cite{cazenave}:
\begin{equation}\label{eq:sol-heat-eq-semigroup}
u(t,\x)=\left(e^{t\Delta_g} u_0\right)(\x),\, t\in [0,\infty), \x\in \HH.    
\end{equation}

\begin{prop}
    Let $u_0\in {\rm L}^2(\HH)$. Then, the following ${\rm L}^2$ exponential decay holds true:
    \[\left\|e^{t\Delta_g} u_0\right\|_{{\rm L}^2(\HH)} \leq e^{-\frac{t}{4}} \|u_0\|_{{\rm L}^2(\HH)}.\]
\end{prop}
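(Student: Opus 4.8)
The plan is to derive the exponential decay directly from the Poincar\'e inequality \eqref{eq:poincare-continuous} via a standard energy (Grönwall) argument applied to the semigroup solution \eqref{eq:sol-heat-eq-semigroup}. First I would set $u(t,\x)\coloneqq (e^{t\Delta_g}u_0)(\x)$ and, assuming first that $u_0$ lies in the domain of $\Delta_g$ (so that $u(t,\cdot)\in H^1(\HH)$ and $t\mapsto u(t,\cdot)$ is differentiable in ${\rm L}^2(\HH)$), compute the time derivative of the squared norm:
\begin{equation*}
\frac{\dd}{\dd t}\,\|u(t,\cdot)\|_{{\rm L}^2(\HH)}^2 = 2\,\langle \partial_t u, u\rangle_{{\rm L}^2(\HH)} = 2\,\langle \Delta_g u, u\rangle_{{\rm L}^2(\HH)} = -2\int_\HH |\nabla_g u|_g^2\,\dd\mu,
\end{equation*}
where the last equality is integration by parts (valid since $u(t,\cdot)\in H^1(\HH)$, and there are no boundary terms on the complete manifold $\HH$).

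Next I would invoke the Poincar\'e inequality \eqref{eq:poincare-continuous} to bound the right-hand side by $-\frac12\|u(t,\cdot)\|_{{\rm L}^2(\HH)}^2$, obtaining the differential inequality
\begin{equation*}
\frac{\dd}{\dd t}\,\|u(t,\cdot)\|_{{\rm L}^2(\HH)}^2 \leq -\frac12\,\|u(t,\cdot)\|_{{\rm L}^2(\HH)}^2.
\end{equation*}
Grönwall's lemma then gives $\|u(t,\cdot)\|_{{\rm L}^2(\HH)}^2 \leq e^{-t/2}\|u_0\|_{{\rm L}^2(\HH)}^2$, and taking square roots yields the claimed estimate $\|e^{t\Delta_g}u_0\|_{{\rm L}^2(\HH)} \leq e^{-t/4}\|u_0\|_{{\rm L}^2(\HH)}$.

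Finally, to handle a general $u_0\in{\rm L}^2(\HH)$ (not necessarily in the domain of $\Delta_g$), I would use a density argument: the domain of $\Delta_g$ is dense in ${\rm L}^2(\HH)$, the estimate passes to the limit because each $e^{t\Delta_g}$ is a bounded (contraction) operator and the bound $e^{-t/4}$ is independent of the approximating sequence. Alternatively, and perhaps more cleanly, one can argue spectrally: the Poincar\'e inequality says exactly that the self-adjoint operator $-\Delta_g$ satisfies $-\Delta_g \geq \tfrac14$ in the form sense, hence its spectrum is contained in $[\tfrac14,\infty)$, so by the spectral theorem $\|e^{t\Delta_g}\|_{{\rm L}^2\to{\rm L}^2} = e^{-t/4}\sup\{e^{-t(\lambda-1/4)} : \lambda\in\mathrm{spec}(-\Delta_g)\}\cdot e^{\text{(nothing)}} \leq e^{-t/4}$. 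I expect no serious obstacle here; the only point requiring a little care is justifying the differentiation of the norm and the integration by parts for solutions that are merely in the energy space, which is why I would either restrict to the operator domain first and then use density, or bypass the issue entirely with the functional-calculus argument.
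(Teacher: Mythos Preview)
Your proof is correct and follows essentially the same route as the paper: integrate by parts to turn the Poincar\'e inequality into the form bound $-\langle \Delta_g u,u\rangle \geq \tfrac14\|u\|^2$, then deduce the exponential decay. The only difference is packaging: the paper states the coercivity estimate and then invokes \cite[Theorems 3.1.1 and 3.2.1]{cazenave} for the semigroup decay, whereas you write out the underlying energy/Gr\"onwall computation explicitly (and add a spectral-calculus alternative); both amount to the same argument.
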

\begin{proof}
    An integration by parts (refer again to \cite[Section 2.1]{IgnatManeaMoroianu2024}) in the Poincar\' e inequality \eqref{eq:poincare-continuous} implies that 
    \[-\left\langle \Delta_g e^{t\Delta_g} u_0,e^{t\Delta_g} u_0\right\rangle_{{\rm L}^2(\HH)} \geq \frac{1}{4} \left\|e^{t\Delta_g} u_0\right\|_{{\rm L}^2(\HH)}^2.\]
    Then, the desired inequality follows by {\cite[Theorems 3.1.1 and 3.2.1]{cazenave}}.
\end{proof}

\subsection{Heat kernel estimates}
\label{sec:kernel_estimates}
In addition to the study of the heat equation on $\mathbb{H}^n$ solely by means of semigroup theory, one can dive deeper into a more explicit form of the solutions via an integral kernel, similarly to the Euclidean setup. More precisely, for any $n\geq 2$, there exists a function $K_n:(0,\infty)\times [0,\infty)\to [0,\infty)$ such that the solution of the homogenous heat equation on $\mathbb{H}^n$ can be given as the following integral expression:
\begin{equation} \label{eq:heat-solution-convolution}
    \left(e^{t\Delta_g} u_0\right)(\x)= \displaystyle \int_{\mathbb{H}^n} K_n(t, d_g(\mathbf{x}, \mathbf{y}))\, u_0(\mathbf{y})\, \dd{\mu(\mathbf{y})}, \, \forall \x \in \HH.
\end{equation}
We refer to \cite{DaviesMandouvalos,GrigoryanHeatKernelOnHyperbolic} for more insight about the heat kernel on $\mathbb{H}^n$. For the purpose of our study, we will need a uniform estimate of the kernel from \cite{DaviesMandouvalos}, which is summarised in the next proposition:

\begin{prop}[{\cite[Theorem 3.1]{DaviesMandouvalos}}]\label{prop:uniform-estimate-kernel}
    For any $n \geq 2$, there exist two constants $m, M > 0$ depending on $n$ such that:
    \begin{equation} \label{eq:cont04}
        m \, h_n(t, \uprho) \leq K_n(t, \uprho) \leq M \, h_n(t, \uprho), \quad \forall~(t, \uprho) \in (0, +\infty)\times [0, +\infty)\,,
    \end{equation} 
    where
    \begin{equation} \label{eq:cont05} 
        h_n(t, \uprho) = t ^ {-n / 2} \, \mathrm{e}^{-(n - 1)^ 2 t /4 - \uprho^2 / (4 t) - (n - 1) \uprho / 2} \, \left(1 + \uprho + t\right)^{(n - 3) / 2} \, \left(1 + \uprho\right). 
    \end{equation}
\end{prop}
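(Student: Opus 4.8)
Since the statement is quoted from \cite[Theorem 3.1]{DaviesMandouvalos}, in the paper one simply invokes that reference; here is the route I would follow to establish \eqref{eq:cont04}--\eqref{eq:cont05} from scratch. The plan is to exploit \emph{explicit} radial representations of the heat kernel. For odd $n$ there is a finite closed form, obtained by applying $-\tfrac{1}{\sinh\uprho}\partial_\uprho$ repeatedly to the one-dimensional Gaussian; in particular, for $n=3$,
\[
K_3(t,\uprho)=(4\pi t)^{-3/2}\,\frac{\uprho}{\sinh\uprho}\,e^{-t-\uprho^2/(4t)}.
\]
For even $n$ one uses McKean's integral (descent) formula, whose base case $n=2$ is
\[
K_2(t,\uprho)=\frac{\sqrt{2}}{(4\pi t)^{3/2}}\,e^{-t/4}\int_{\uprho}^{\infty}\frac{s\,e^{-s^2/(4t)}}{\sqrt{\cosh s-\cosh\uprho}}\,\dd{s}.
\]
Dimensions of the same parity are linked by the intertwining identity $K_{n+2}(t,\uprho)=-\tfrac{e^{-nt}}{2\pi\sinh\uprho}\,\partial_\uprho K_n(t,\uprho)$, where the factor $e^{-nt}$ accounts precisely for the shift $(n+1)^2/4-(n-1)^2/4=n$ in the exponent of the comparison function $h_n$.

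The case $n=3$ is immediate from the elementary two-sided bound $\tfrac{\uprho}{\sinh\uprho}\asymp(1+\uprho)e^{-\uprho}$, valid uniformly for $\uprho\ge0$, which reproduces exactly the factors $e^{-\uprho}$ and $(1+\uprho)$ of $h_3$ (recall $(1+\uprho+t)^{0}=1$). The substantive work is the case $n=2$, where one estimates the McKean integral by Laplace's method: writing $s=\uprho+u$ yields $e^{-s^2/(4t)}=e^{-\uprho^2/(4t)}e^{-\uprho u/(2t)}e^{-u^2/(4t)}$ and $\cosh s-\cosh\uprho=2\sinh\tfrac{s+\uprho}{2}\sinh\tfrac{u}{2}$, so the mass of the integral sits near $u=0$. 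Comparing the integrand with $e^{-\uprho/2}$ times a one-dimensional heat kernel in $u$, and splitting the parameter range into the regimes $t\lesssim1$ vs.\ $t\gtrsim1$ and $\uprho\lesssim1$ vs.\ $\uprho\gtrsim1$, one extracts the factor $e^{-\uprho/2}$, the extra power $t^{1/2}$ that turns $t^{-3/2}$ into $t^{-1}$, and the correction $(1+\uprho+t)^{-1/2}(1+\uprho)$. With $n\in\{2,3\}$ settled, the remaining dimensions follow by induction through the intertwining identity, checking regime by regime that differentiation in $\uprho$ and division by $\sinh\uprho$ carry bounds on $K_n$ to bounds on $K_{n+2}$, with some extra care in the near-diagonal zone $\uprho\lesssim\sqrt t$, where $K_n$ is even in $\uprho$ but the model function $h_n$ is not.

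\textbf{Main obstacle.} I expect the hard part to be the $n=2$ Laplace-type analysis: producing \emph{matching} upper and lower constants uniformly across all four regimes of $(t,\uprho)$, and in particular seeing how the singular weight $(\cosh s-\cosh\uprho)^{-1/2}$ interacts with the Gaussian near the diagonal to generate the half-integer power $(1+\uprho+t)^{-1/2}$ rather than pure Gaussian decay — this transition between the diffusive region $\uprho\lesssim t$ and the ballistic region $\uprho\gtrsim t$ is the crux. Once the two base cases are in hand, propagating the estimates to all $n\ge2$ via the intertwining operator is essentially bookkeeping.
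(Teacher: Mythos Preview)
Your proposal is correct in spirit, and you already identify the key point: the paper does not prove this proposition at all --- it is stated purely as a citation of \cite[Theorem~3.1]{DaviesMandouvalos}, with no argument supplied. So there is nothing in the paper to compare against beyond the bare reference.

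That said, the route you sketch is essentially the one taken in \cite{DaviesMandouvalos} itself: explicit closed form in odd dimensions (with $n=3$ as the trivial base case via $\uprho/\sinh\uprho\asymp(1+\uprho)e^{-\uprho}$), Laplace-type asymptotics of the Abel/McKean integral for $n=2$ split over the four regimes in $(t,\uprho)$, and the Millson-type descent operator $-\tfrac{1}{2\pi\sinh\uprho}\partial_\uprho$ together with the spectral shift $e^{-nt}$ to pass from $n$ to $n+2$. Your identification of the main obstacle --- obtaining matching two-sided constants for the $n=2$ integral uniformly across the diffusive/ballistic transition, where the half-power $(1+\uprho+t)^{-1/2}$ emerges --- is exactly the technical heart of the Davies--Mandouvalos argument. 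One minor caution: in the inductive step the lower bound requires more than ``bookkeeping'', since differentiating an upper bound does not automatically yield a lower bound on the derivative; Davies and Mandouvalos handle this by differentiating the integral representation itself rather than the estimate.
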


With the help of the proposition above, one can prove that the $\|\cdot\|_{\mathcal{M}}$~norm of the solution of the heat equation \eqref{eq:heat-hyperbolic-source} is controlled by the same norm applied to the inital datum $u_0$ and the source term $f$. This behaviour is outlined in the following proposition which will be useful to derive the convergence of the FDM scheme:

\begin{prop}\label{prop:semigrup-marginire}
For every $T > 0$ there exists a constant $C_{T} > 0$, such that, if $u_0$ and $f$ satisfy Hypothesis~\ref{hyp:biserica-Grivitei}, then the solution $u$ of \eqref{eq:heat-hyperbolic-source} satisfies, for every $t\in [0,T]$ the following estimate:
\[\|u(t)\|_{\mathcal M}\leq C_{T} \left( \|u_0\|_{\mathcal{M}} + \|f\|_{C([0,T],\mathcal{M})} \right).\]
\end{prop}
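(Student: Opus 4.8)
The plan is to use the Duhamel (variation of constants) representation of the solution
\[
u(t) = e^{t\Delta_g} u_0 + \int_0^t e^{(t-s)\Delta_g} f(s)\,\dd{s},
\]
so that it suffices to control $\|e^{t\Delta_g} v\|_{\mathcal{M}}$ for a fixed $v\in\mathcal{M}$, uniformly for $t\in[0,T]$, and then integrate in $s$ using the continuity of $f$. Since $\|\cdot\|_{\mathcal M}$ is built from weighted $\mathrm{L}^2$ norms of derivatives $\partial_{x_1}^\alpha\partial_{x_2}^\beta$ up to order $6$ (by the Remark after Hypothesis~\ref{hyp:biserica-Grivitei}), and since differentiation commutes with the heat semigroup in a suitable sense (the derivatives of $e^{t\Delta_g}v$ solve related equations, or one differentiates under the convolution integral \eqref{eq:heat-solution-convolution}), the core task reduces to a single kind of estimate: given a weight $e^{w\,d_g(\mathrm O,\cdot)}$ with $w>0$ arbitrary, bound
\[
\left\| e^{w\,d_g(\mathrm O,\x)}\!\left(e^{t\Delta_g}v\right)(\x)\right\|_{{\rm L}^2(\HH)}
\ \le\ C_T \sup_{w'>0}\left\| e^{w'\,d_g(\mathrm O,\x)}\,v(\x)\right\|_{{\rm L}^2(\HH)}
\]
for $t\in[0,T]$.

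The key step is to prove this weighted $\mathrm{L}^2$ bound using the uniform heat-kernel estimate of Proposition~\ref{prop:uniform-estimate-kernel}. Writing the convolution \eqref{eq:heat-solution-convolution} with $n=2$ and inserting $h_2(t,\uprho)\lesssim t^{-1} e^{-t/4 - \uprho^2/(4t) - \uprho/2}(1+\uprho+t)^{-1/2}(1+\uprho)$, I would estimate, by the triangle inequality for $d_g$,
\[
e^{w\,d_g(\mathrm O,\x)} \le e^{w\,d_g(\mathrm O,\y)}\,e^{w\,d_g(\x,\y)},
\]
so that the weight on the left splits into the weight on $v$ (absorbed into $\|v\|_{\mathcal M}$-type quantity, choosing $w'=w$ in the supremum) times $e^{w\,d_g(\x,\y)}=e^{w\uprho}$, which must be beaten by the Gaussian factor $e^{-\uprho^2/(4t)}$ in the kernel. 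Concretely, for $t\le T$ one has $e^{w\uprho}e^{-\uprho^2/(4t)}\le e^{w^2 t}\le e^{w^2 T}$, which blows up as $w\to\infty$ — so a naive single application fails. The fix is the standard semigroup-splitting trick: choose a fixed intermediate time, say $t_0=\min(t,1)$, write $e^{t\Delta_g}=e^{(t-t_0)\Delta_g}e^{t_0\Delta_g}$, and note that $e^{(t-t_0)\Delta_g}$ is an $\mathrm{L}^2$-contraction while on the short step $[0,t_0]$ we only need the weight estimate with $w$ replaced by a suitable larger $w''$ (paying a factor $e^{(w'')^2 t_0}\le e^{(w'')^2}$), controlled by taking $w'=w''$ in the supremum defining $\|v\|_{\mathcal M}$. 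Actually the cleanest route: bound $e^{w\,d_g(\mathrm O,\x)}|(e^{t\Delta_g}v)(\x)|$ pointwise by a convolution of $e^{w'\,d_g(\mathrm O,\cdot)}|v|$ (with $w'$ slightly larger than $w$) against the kernel $h_2(t,\uprho)e^{w\uprho}$, estimate the latter's $\mathrm{L}^1$ norm in $\y$ against $\mu$ uniformly for $t\in(0,T]$ — using $\int_{\HH} e^{-\uprho^2/(4t)-\uprho/2+w\uprho}(1+\uprho)\,t^{-1}\sinh\uprho\,\dd\uprho$ and the volume growth $\sinh\uprho\sim e^{\uprho}$ — and apply Young's convolution-type inequality on $\HH$ (Minkowski's integral inequality). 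Here one must be slightly careful: the polynomial-in-$t$ prefactor $t^{-1}(1+\uprho+t)^{-1/2}$ is singular as $t\to 0^+$, but integration against $\sinh\uprho\,\dd\uprho$ produces a compensating power of $t$, so the resulting constant stays finite on $(0,T]$; near $t=0$ the semigroup tends to the identity and the bound is trivially consistent.

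Having the weighted $\mathrm{L}^2$ bound for each of the finitely many derivative indices $(\alpha,\beta)$ with $\alpha+\beta\le 6$, taking the supremum over $w>0$ and summing over the indices yields $\|e^{t\Delta_g}v\|_{\mathcal M}\le C_T\|v\|_{\mathcal M}$; applying it with $v=u_0$ and $v=f(s)$ and integrating the Duhamel formula gives
\[
\|u(t)\|_{\mathcal M}\le C_T\|u_0\|_{\mathcal M}+\int_0^t C_T\|f(s)\|_{\mathcal M}\,\dd{s}\le C_T\big(\|u_0\|_{\mathcal M}+T\|f\|_{C([0,T],\mathcal M)}\big),
\]
which is the claim after renaming the constant. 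The main obstacle is the interchange of differentiation with the semigroup together with the uniform-in-$w$ control: one needs to ensure that differentiating $e^{t\Delta_g}v$ in $x_1,x_2$ (which in the half-plane model does \emph{not} commute with $\Delta_g$ because the coefficients $x_2^2$ are variable) still produces quantities controlled by $\|v\|_{\mathcal M}$. This is handled by commuting $\partial_{x_i}$ past $\Delta_g$ and collecting the lower-order commutator terms — which is exactly why $\mathcal M$ is defined with six derivatives and an exponential weight that is \emph{uniform} in $w$, so that each commutator, producing extra polynomial factors in $x_2$ or $1/x_2$ (hence, by \eqref{eq:hyp-distance}, factors bounded by $e^{c\,d_g(\mathrm O,\x)}$), can be reabsorbed at the cost of slightly enlarging $w$.
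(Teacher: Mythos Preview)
The paper does not actually prove this proposition; it only remarks that it follows from the heat-kernel estimate of Proposition~\ref{prop:uniform-estimate-kernel}. Your plan --- Duhamel's formula, the pointwise kernel bound, the triangle inequality $e^{w\,d_g(\mathrm O,\x)}\le e^{w\,d_g(\mathrm O,\y)}e^{w\,d_g(\x,\y)}$, and a Young/Minkowski convolution estimate --- is exactly the route the paper intends, so at the level of strategy you are in agreement.

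Two remarks on the execution. First, the uniform-in-$w$ difficulty you flag is genuine and neither of your proposed fixes removes it: the $L^1(\sinh\rho\,\dd\rho)$ norm of the tilted kernel $h_2(t,\rho)e^{w\rho}$ behaves like $e^{ctw^2}$, and passing from $w$ to a larger $w'$ on the $v$-side leaves the kernel factor $e^{w\rho}$ untouched; the semigroup splitting does not help either, since the weighted bound is needed at every step. In fact, taken literally, the space $\mathcal{M}$ defined in Hypothesis~\ref{hyp:biserica-Grivitei} contains only the zero function (any nonzero continuous $v$ is bounded away from zero near some point $x_0\neq\mathrm O$, so $\|e^{w\,d_g}v\|_{L^2}\gtrsim e^{w\,d_g(\mathrm O,x_0)}\to\infty$), which makes the statement vacuous. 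The intended reading is almost certainly a \emph{fixed} weight $w$ depending on $\gamma$ --- compare Proposition~\ref{prop:tail-control}, which only ever uses $w=2+4/\gamma$ --- and under that interpretation your argument does produce a $(T,w)$-dependent constant, which is all the rest of the paper needs. Second, your commutator approach to the derivatives is plausible but only sketched; a cleaner variant is to differentiate the convolution \eqref{eq:heat-solution-convolution} directly, using that the radial derivatives $\partial_\rho^k K_2(t,\rho)$ obey the same Gaussian upper bounds as $K_2$ while the covariant derivatives of $\rho=d_g(\x,\y)$ in $\x$ are uniformly bounded on $\HH$.
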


\section{The first discrete Laplacian}
\label{sec:first-lapl}

\subsection{Finite difference grid}
\label{sec:construction-first-lapl}
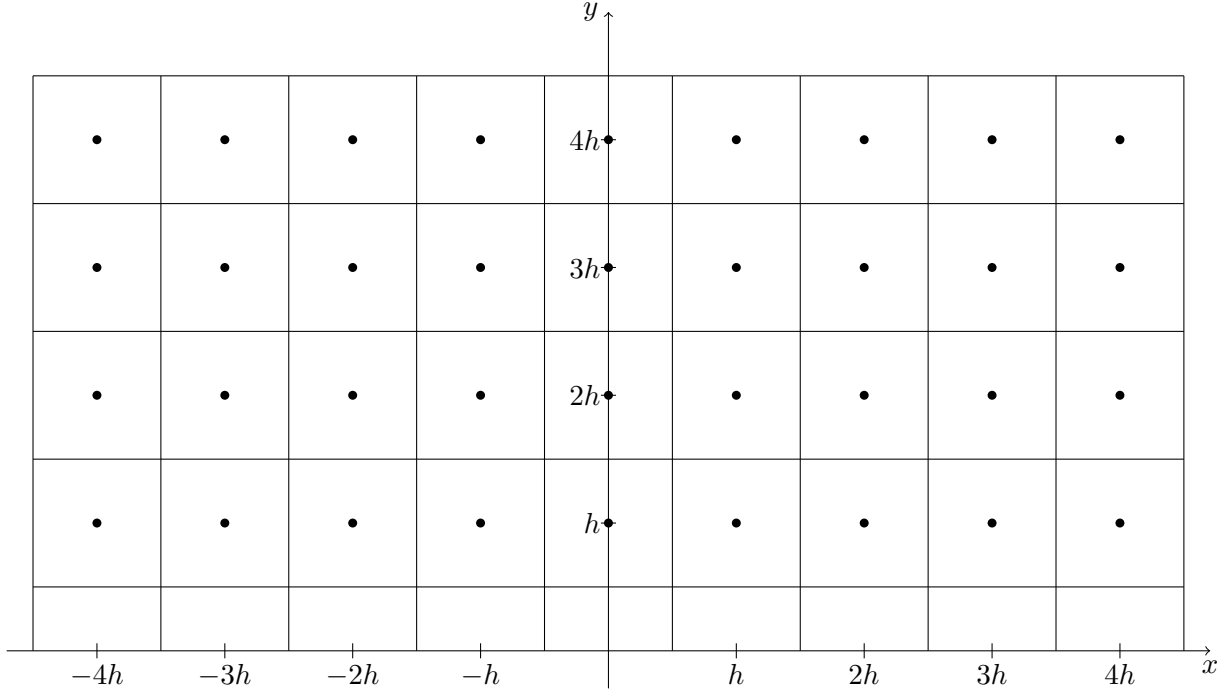
\begin{figure}[h]
\begin{center}
\begin{tikzpicture}
  \draw[->] (-8,0) -- (8,0) node[right] {};
  \draw[->] (0,-0.5) -- (0,8.5) node[above] {};

  \def\sx{1.7}
  \foreach \x in {-5,-4,-3,-2,-1,0,1,2,3,4}
    \draw ({\sx*(\x+0.5)},0) -- ({\sx*(\x+0.5)},{\sx*4.5});

  \node[below] at (8,0) {$x$};
  \node[left] at (0,8.5) {$y$};

  \foreach \x in {-4,-3,-2,-1,0,1,2,3,4}
    \foreach \y in {1,2,3,4}
      \node[circle,fill,inner sep=1.2pt] at ({\sx*\x},{\sx*\y}) {};

  \foreach \x in {-5,-4,-3,-2,-1,0,1,2,3}
    \foreach \y in {0,1,2,3,4}
      \draw ({\sx*(\x+0.5)},{\sx*(\y+0.5)}) -- ({\sx*(\x+1.5)},{\sx*(\y+0.5)});

  \foreach \x in {-4,-3,-2,2,3,4}
  {
    \draw ({\sx*\x},-0.1) -- ({\sx*\x},0.1) node[below=4pt] {$\x h$};
  }
\foreach \x in {-1}
  {
    \draw ({\sx*\x},-0.1) -- ({\sx*\x},0.1) node[below=4pt] {$-h$};
  }  
  \foreach \x in {1}
  {
    \draw ({\sx*\x},-0.1) -- ({\sx*\x},0.1) node[below=4pt] {$h$};
  } 
  \foreach \y in {2,3,4}
  {
    \draw (-0.1,{\sx*\y}) -- (0.1,{\sx*\y}) node[left=2pt] {$\y h$};
  }
  \foreach \y in {1}
  {
    \draw (-0.1,{\sx*\y}) -- (0.1,{\sx*\y}) node[left=2pt] {$h$};
  }

\end{tikzpicture}
\caption{\label{fig:first-grid} The grid corresponding to the first discrete Laplacian on $\mathbb{H}^2$.}
\end{center}
\end{figure}

We shall construct a numerical approximation, $u^h$, to the solution of the problem~\eqref{eq:heat-hyperbolic-source} on an equidistant spatial grid defined on $\HH$ by the following nodes:
\begin{equation} \label{eq:spatial-grid}
    (i h,j h) \in \HH \quad \, i \in \mathbb{Z}, \, j \in \mathbb{N}^\ast\,,
\end{equation}
where $h > 0$ is the uniform stepsize of the FDM grid, see Figure~\ref{fig:first-grid}.

\paragraph{Finite difference cell.} Around each spatial point of the discrete FDM grid, $(ih, jh) \in \HH$, we define the following cells:
\begin{equation*}
    \mathcal{C}_h^{i, j} \coloneqq \left[ih - \frac h 2, ih + \frac h 2\right] \times \left[jh - \frac h 2, jh + \frac h 2\right]\,,
\end{equation*}
with the hyperbolic area of
\begin{equation*}
    \left|\CC_h^{i,j}\right|_g:=\displaystyle \int_{\mathcal{C}_h^{i, j}} \dd{\upmu} = \displaystyle \int_{ih - h / 2}^{ih + h / 2} \int_{jh - h / 2}^{jh + h / 2} \dfrac{1}{x_2^2} \dd{\x} = \dfrac{1}{j - \frac 1 2} - \dfrac{1}{j + \frac 1 2} = \dfrac{1}{j^2 - \frac 1 4}.
\end{equation*}

\paragraph{Space of grid functions.}
In order to represent the value of the approximated solution $u^h$ in each of the cells above, we introduce the following space of grid functions:
\[\ell_h^2\coloneqq \left\{\left(v^h_{i, j}\right)_{i \in \mathbb{Z}, \, j \in \mathbb{N}^\ast}:  \Vert v^h \Vert_{\ell_h^2} \coloneqq \langle v^h, v^h\rangle_{\ell_h^2}^{1/2} < +\infty \right\},\]
where the aforementioned scalar product is defined as:
\begin{equation*}
    \langle u^h, v^h \rangle_{\ell_h^2} \coloneqq \displaystyle \sum_{i \in \mathbb{Z}, \, j \in \mathbb{N}^\ast}  \left|\CC_h^{i,j}\right|_g u^h_{i, j} v^h_{i, j} =  \displaystyle \sum_{i \in \mathbb{Z}, \, j \in \mathbb{N}^\ast}  \dfrac{1}{j^2 - \frac 1 4} u^h_{i, j} v^h_{i, j}\,,
\end{equation*}

\paragraph{Projection on the space of grid functions.} Next, we define the projection of a function $v \in \mathrm{L}^2(\HH)$ on the space of grid functions, $\ell^2_h$ as:
\begin{equation} \label{eq:projection-first-lapl}
    \Pi_h : \mathrm{L}^2(\HH) \longrightarrow \ell_h^2, \qquad (\Pi_h v)_{i, j} \coloneqq \frac{1}{\left|\CC_h^{i,j}\right|_g} \int_{\mathcal{C}_h^{i, j}} v(\x) \dd{\mu(\x)} =\left(j^2 - \dfrac{1}{4}\right) \displaystyle \int_{\mathcal{C}_h^{i, j}} v(x_1, x_2) \dfrac{1}{x_2^2} \dd{\x}.
\end{equation}
We note that the projection operator  $\Pi_h$ given by \eqref{eq:projection-first-lapl} is contractive, meaning that $\Vert \Pi_h(v) \Vert_h\leq \|v\|_{\mathrm{L}^2(\HH)}$.

\subsection{Discrete Laplace operator and accuracy}\label{section:First-Discrete-Laplacian}

We aim to construct the discrete FDM counterpart of $\Delta_g$ as a weighted combination of a five-point stencil in the grid in Figure \ref{fig:first-grid}. Namely, we seek to calibrate the weights $\nord, \sud,\vest, \est$ and $\punct$ corresponding to the north, south, west and east direction, as well as the weight of the point itself in the following expression:
\begin{equation} \label{eq:abstract-weights}
    \left(\Delta^{(1)}_h v^h\right)_{i, j} \coloneqq \mathcal{E} v^h_{i+1, j} + \mathcal{W}v^h_{i-1, j} + \mathcal{N} v^h_{i, j+1} + \mathcal{S} v^h_{i, j-1} -  \mathcal{P} v^h_{i, j}\, \quad \forall~i\in\mathbb{Z}, \; j\in\mathbb{N}^\ast\,,
\end{equation}
in order to attain accuracy of order $\mathcal{O}(h^2)$. More precisely, we aim to obtain the following estimate:
\begin{equation}\label{eq:error-vanishes}
    Err_h(v)\coloneqq\|\Delta^{(1)}_h \Pi_h v - \Pi_h \Delta_g v\|_{\ell_h^2}\xrightarrow{h\to 0}0,\, \forall v\in \mathcal{M}.
\end{equation}

The definitions of the $\ell_h^2$ norm and of the projection $\Pi_h$, together with a change of variables leads to:
\begin{equation}\label{eq:lapl1-err-NSEV}
\begin{aligned}
Err_{h}(v) &= \displaystyle  \sum_{i \in \Z, j \geq 2} \frac{1}{\left(j^2 - \frac{1}{4}\right)} \left[\est \left(j^2-\frac{1}{4}\right) \int_{\CC^{i,j}_h} \frac{v(x_1+ h,x_2)}{x_2^2} \dd{\x} +
\vest \left(j^2-\frac{1}{4}\right) \int_{\CC^{i,j}_h} \frac{v(x_1- h,x_2)}{x_2^2} \dd{\x} \right.\\[4pt]
 &\phantom{Err_h(v)} \quad +\nord\left((j+ 1)^2 -\frac{1}{4}\right) \int_{\CC^{i,j}_h} \frac{v(x_1,x_2+ h)}{(x_2+ h)^2}\dd{\x} + \sud \left((j- 1)^2 -\frac{1}{4}\right) \int_{\CC^{i,j}_h} \frac{v(x_1,x_2- h)}{(x_2- h)^2}\dd{\x} \\[4pt]
        &\phantom{Err_h(v)} \quad \left. -  \punct \left(j^2 - \frac{1}{4}\right) \int_{C_h^{i, j}}\dfrac{v(x_1,x_2)}{x_2^2} \dd{\mathbf{x}} - \left(j^2 - \frac{1}{4}\right) \int_{C_h^{i, j}} \left(\partial^2_{x_1} v(x_1, x_2) + \partial^2_{x_2} v(x_1,x_2)\right) \dd{\x} \right]^2\\
        &\phantom{Err_h(v)}\quad+\frac{3}{4} \sum_{i\in \Z} \left[(\Pi_h v)_{i \pm 1, 1} + (\Pi_h v)_{i, 2} - 4 (\Pi_h v)_{i, 1} - \frac{4}{3} \left(\Pi_h \Delta_g v\right)_{i, 1} \right]^2.
\end{aligned}
\end{equation}
For functions $v\in \mathcal{M}$, the last line vanishes at any polynomial rate by Proposition \ref{prop:tail-control}, so we focus on the remaining terms of \eqref{eq:lapl1-err-NSEV}. Their form  suggests using the following Taylor expansions with integral reminder on the horizontal direction:
\[\begin{aligned}
v(x_1\pm h,x_2) - 2v(x_1,x_2) = h^2 \partial_{x_1}^2 v(x_1,x_2) + \frac{h^4}{6}\int_{-1}^1 \partial_{x_1}^4 v(x_1+h\upsigma)(1-|\upsigma|)^3\dd\upsigma.
\end{aligned}\]
Therefore, 
\begin{equation}\label{eq:croytoru-1}
\begin{aligned}
\left(j^2-\frac 1 4\right)\left[v(x_1\pm h,x_2) - 2v(x_1,x_2)\right] &= \left[\left(j^2-\frac 1 4\right)h^2-x_2^2\right] \partial_{x_1}^2 v(x_1,x_2) + x_2^2\partial_{x_1}^2 v(x_1,x_2)\\ & \quad+ \underbrace{\left(j^2-\frac 1 4\right)\frac{h^4}{6}\int_{-1}^1 \partial_{x_1}^4 v(x_1+h\upsigma)(1-|\upsigma|)^3\dd\upsigma}_{\mathcal{O}(h^2)},
\end{aligned}
\end{equation}
where the estimate $\mathcal{O}(h^2)$ follows from the construction of the cell $\mathcal{C}_h^{i,j}$. By the same argument, the term $\left[\left(j^2-\frac 1 4\right)h^2-x_2^2\right]$ is of order $\mathcal{O}(h)$

On the vertical direction we perform the same reasoning, but for the function $x_2\to \dfrac{v(x_1,x_2)}{x_2^2}$:
\begin{equation}\label{eq:croytoru-2}
\begin{aligned}
&\left[(j+1)^2-\frac{1}{4}\right]\frac{u(x_1,x_2+h)}{(x_2+h)^2} + \left[(j-1)^2-\frac{1}{4}\right]\frac{u(x_1,x_2-h)}{(x_2-h)^2}- 2 \left(j^2-\frac{1}{4}\right) \frac{u(x_1,x_2)}{x_2^2}\\
&\quad =\left[\left(j^2-\frac{1}{4}\right)h^2 -x_2^2\right]\partial^2_{x_2}\left[ \frac{u(x_1,x_2)}{x_2^2}\right] +4(jh-x_2) \partial_{x_2}\left[ \frac{u(x_1,x_2)}{x_2^2}\right]+h^2 \partial^2_{x_2}\left[ \frac{u(x_1,x_2)}{x_2^2}\right]\\
&\quad + \underbrace{x_2^2\partial_{x_2}^2\left[ \frac{u(x_1,x_2)}{x_2^2}\right]+4x_2\partial_{x_2}\left[ \frac{u(x_1,x_2)}{x_2^2}\right]+ 2 \frac{u(x_1,x_2)}{x_2^2}}_{\partial_{x_2}u(x_1,x_2)} +\mathcal{O}(h^2),
\end{aligned}
\end{equation}
where the term $(jh-x_2)$ is also $\mathcal{O}(h)$ due to the construction of the FDM grid.

These Taylor expansions allow us to fit the parameters $\nord,\sud,\est,\vest,\punct$ and obtain the following formula for the finite-difference approximation of the Laplace-Beltrami operator $\Delta_g$  on the FDM grid~\eqref{eq:spatial-grid}:
\begin{equation} \label{eq:discrete-first-laplace}
    \left(\Delta^{(1)}_h v^h\right)_{i, j} \coloneqq \left( j^2 - \dfrac{1}{4} \right) \left(v^h_{i+1, j} + v^h_{i-1, j} + v^h_{i, j+1} + v^h_{i, j-1} - 4v^h_{i, j}\right)\, \quad \forall~i\in\mathbb{Z}, \; j\in\mathbb{N}^\ast\,,
\end{equation}
where we employ the convention $v^h_{i, 0} = 0, \; \forall~i\in\mathbb{Z}$. We note that, even though the weight  $\left(j^2-\frac 1 4\right)$ in \eqref{eq:discrete-first-laplace} does not directly depend on the grid parameter $h$, it automatically increases as the grid gets refined, i.e. when $h$ decreases.

Since the terms $\left[\frac{\left(j^2-1/4\right)h^2}{x_2^2}-1\right]$ and $(jh-x_2)$ have Euclidean zero mean on the cell $C_h^{i,j}$, one can improve, by \eqref{eq:lapl1-err-NSEV}-\eqref{eq:croytoru-2}, the order of accuracy of $\Delta_h^{(1)}$ to $\mathcal{O}(h^2)$:

\begin{thm}[Accuracy for the first discrete Laplacian]\label{thm:consistency-first-lapl}
There exists a universal constant $C>0$ such that, for every $v\in\mathcal{M}$ and $h\in (0,1)$ the following estimate holds:
\begin{equation}\label{eq:consistency-estimate-first-lapl}
Err_h(v)\coloneqq \|\Delta^{(1)}_h \Pi_h v - \Pi_h \Delta_g v\|_{\ell_h^2}\leq  C \left[h^2 \|v\|_{\mathcal{M}_\gamma} + \|v(x)\|_{{\rm L}^2(\HH\setminus\mathbb{H}^2_{\frac{2}{5h}})}+\|\Delta_g v(x)\|_{{\rm L}^2(\HH\setminus\mathbb{H}^2_{\frac{2}{5h}})} \right].
\end{equation}
\end{thm}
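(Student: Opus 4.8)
The plan is to bound $Err_h(v)$ by splitting the sum \eqref{eq:lapl1-err-NSEV} into the bulk part (indices $i\in\Z$, $j\ge 2$) and the boundary row ($j=1$), and within the bulk part to separate the terms that come from cells far from $\mathrm{O}$ — which are absorbed into the tail norms — from the near cells, where a quantitative Taylor analysis gives the $h^2$ factor. First I would note that the last line of \eqref{eq:lapl1-err-NSEV} is handled directly by Proposition~\ref{prop:tail-control}: writing $(\Pi_h v)_{i\pm1,1}$, $(\Pi_h v)_{i,2}$, $(\Pi_h v)_{i,1}$ as cell-averages and using contractivity of $\Pi_h$, that term is bounded by a constant times $\|v\|_{{\rm L}^2(\HH\setminus\mathbb{H}^2_{2/(5h)})}+\|\Delta_g v\|_{{\rm L}^2(\HH\setminus\mathbb{H}^2_{2/(5h)})}$ (and in fact also by $h^2\|v\|_{\mathcal M}$ if one prefers; the radius $2/(5h)$ appears because the relevant cells near the base row have $x_2\approx h$, hence $d_g(\mathrm O,\x)\gtrsim\log(1/h)$).

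For the bulk term, the key algebraic step is already laid out in \eqref{eq:croytoru-1}–\eqref{eq:croytoru-2}: plugging the stencil \eqref{eq:discrete-first-laplace} into $Err_h$, each summand over a cell $\CC_h^{i,j}$ equals
\[
\left(j^2-\tfrac14\right)\!\!\int_{\CC_h^{i,j}}\!\!\Big\{\big[(j^2-\tfrac14)h^2-x_2^2\big]\,\partial_{x_1}^2 v + \big[(j^2-\tfrac14)h^2-x_2^2\big]\,\partial_{x_2}^2\!\big[\tfrac{v}{x_2^2}\big] + 4(jh-x_2)\,\partial_{x_2}\!\big[\tfrac{v}{x_2^2}\big] + h^2(\cdots) + \mathcal O(h^4)(\cdots)\Big\}\frac{\dd\x}{x_2^2},
\]
up to the integral-remainder terms which are pointwise $\mathcal O(h^4)$ times fourth derivatives. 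The two "bad" factors $[(j^2-\tfrac14)h^2-x_2^2]$ and $(jh-x_2)$ are only $\mathcal O(h)$ pointwise, but — as the text emphasises — after dividing by $x_2^2$ they have \emph{Euclidean zero mean} on the cell $\CC_h^{i,j}$ (indeed $\int_{jh-h/2}^{jh+h/2}(s-jh)\,ds=0$ and $\int_{jh-h/2}^{jh+h/2}((j^2-\tfrac14)h^2-s^2)\,ds=0$). So I would subtract from $\partial_{x_1}^2 v$, $\partial_{x_2}^2[v/x_2^2]$, $\partial_{x_2}[v/x_2^2]$ their cell-averages; the zero-mean factors kill the averaged parts, and what remains is $\mathcal O(h)\times(\text{oscillation of a second/first derivative over a cell of size }h)=\mathcal O(h^2)$ times a third derivative, by the mean value / Taylor inequality. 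Collecting: each bulk summand is bounded by $h^2$ times $\frac{1}{|\CC_h^{i,j}|_g}\int_{\CC_h^{i,j}}(\text{sum of }|\partial^\alpha v|\text{ with }\alpha\le 4)^2\,\dd\mu$ (after squaring and using Cauchy–Schwarz to pass the cell-average inside the square), so summing over $i,j$ and using $|\CC_h^{i,j}|_g=1/(j^2-\tfrac14)$ to reconstitute the integral gives $Err_h(v)^2\lesssim h^4\sum_{\alpha\le4}\|\partial^\alpha v\|_{{\rm L}^2(\HH)}^2\lesssim h^4\|v\|_{\mathcal M}^2$, which is the first term on the right of \eqref{eq:consistency-estimate-first-lapl}.

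Two points need care, and the second is the real obstacle. (i) Converting $\partial_{x_2}[v/x_2^2]$, $\partial_{x_2}^2[v/x_2^2]$ back into combinations of $\partial_{x_2}^k v$ with coefficients that are powers of $1/x_2$: on the bulk region $j\ge 2$ one has $x_2\ge 3h/2$, so these coefficients are controlled, but to get a \emph{uniform} constant one either restricts to $x_2$ bounded below by a fixed multiple of $h$ and tracks the $h$-powers, or — cleaner — works with the identity $\partial_{x_2}^2[v/x_2^2]+\tfrac{4}{x_2}\partial_{x_2}[v/x_2^2]+\tfrac{2}{x_2^2}\cdot\tfrac{v}{x_2^2}=\tfrac{1}{x_2^2}\partial_{x_2}^2 v$ already used in \eqref{eq:croytoru-2}, keeping everything in terms of $\Delta_g v=x_2^2(\partial_{x_1}^2 v+\partial_{x_2}^2 v)$ and lower-order derivatives so that the $\|\Delta_g v\|_{{\rm L}^2(\HH\setminus\cdots)}$ tail term in \eqref{eq:consistency-estimate-first-lapl} naturally appears from the far cells. (ii) The main obstacle: making the split "near vs. far" quantitative so that the $\mathcal O(h^2)$ estimate on near cells does not blow up while the far-cell contribution is genuinely absorbed by the tail norms. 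Concretely, for cells with $jh\le \tfrac{2}{5h}$ roughly, i.e. those lying in $\mathbb H^2_{2/(5h)}$, the extra powers of $D=2/(5h)$ hidden in factors like $(j^2-\tfrac14)h^2$ stay bounded, so the Taylor argument yields clean $h^2$ bounds; for the remaining cells one does not even try Taylor — one simply bounds the stencil crudely (triangle inequality, contractivity of $\Pi_h$) by the $L^2$-mass of $v$ and of $\Delta_g v$ over $\HH\setminus\mathbb H^2_{2/(5h)}$, which is exactly the last two terms of \eqref{eq:consistency-estimate-first-lapl}. Choosing the threshold so that "near" cells satisfy $x_2\le$ (a small constant)$/h$ and $x_2\ge 3h/2$ is what pins down the peculiar radius $\tfrac{2}{5h}$; verifying that with this choice all constants are universal (independent of $v$ and of $h\in(0,1)$) is the bookkeeping-heavy heart of the proof.
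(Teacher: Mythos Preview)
Your proposal follows the paper's argument closely: the split into the boundary row $j=1$ (handled via the tail norms, as the paper does by invoking Proposition~\ref{prop:tail-control}) and the bulk $j\ge2$ (handled by the Taylor expansions \eqref{eq:croytoru-1}--\eqref{eq:croytoru-2} together with the Euclidean zero-mean property of $\big[(j^2-\tfrac14)h^2/x_2^2-1\big]$ and $(jh-x_2)$ on $\CC_h^{i,j}$) is exactly the paper's route.

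Where you diverge is in your point (ii): you call a near/far decomposition of the bulk region ``the real obstacle'', but the paper makes no such split, and it is not needed. The norm $\|\cdot\|_{\mathcal M}$ carries the exponential weight $e^{w\,d_g(\mathrm O,\x)}$ with $w>0$ arbitrary, and since $d_g(\mathrm O,\x)\ge|\log x_2|$ in the half-plane model, every fixed power of $x_2$ or $1/x_2$ that arises in the Taylor remainders --- in particular the factor $(j^2-\tfrac14)h^2\approx x_2^2$ you worry about, or the inverse powers coming from expanding $\partial_{x_2}^k[v/x_2^2]$ --- is absorbed uniformly into $\|v\|_{\mathcal M}$. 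Hence the whole bulk $j\ge2$ produces a clean $Ch^2\|v\|_{\mathcal M}$ with a universal constant. The two tail terms on the right of \eqref{eq:consistency-estimate-first-lapl} originate solely from the $j=1$ row, where the convention $v^h_{i,0}=0$ destroys the Taylor structure; the cells $\CC_h^{i,1}$ and their stencil neighbours $\CC_h^{i,2}$ satisfy $x_2\le 5h/2$, hence lie in $\HH\setminus\HH_{2/(5h)}$, which explains that particular radius. Your bookkeeping-heavy programme would still work, but it is heavier than what the paper actually needs.
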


\subsection{Reduction to a bounded domain}

Further, motivated by the fact that the operator $\Delta^{(1)}_h$ is not dissipative on the entire grid, we introduce the both theoretical and numerically suited 
bounded domain $\HHD$ defined in \eqref{Intro:eq:cine-PLM-i-HD}, for a fixed $D > 0$ which will be later chosen as in~\eqref{Intro:eq:cine-PLM-i-D}. Taking into account the decay of the solution of the heat equation for large $\x$ (refer to Propositions \ref{prop:tail-control} and \ref{prop:semigrup-marginire}), we approximate the solution of~\eqref{eq:heat-hyperbolic-source} with the solution of the discrete version of the following homogeneous Dirichlet initial-boundary value problem:
\begin{eqnarray} \label{eq:Dirichlet-heat}
\left\{
\begin{array}{ll}
\partial_t u(t, \mathbf{x}) = \Delta_g u(t,\mathbf{x})+f(t,\x)\,,
& t \in (0, T],\; \mathbf{x} \in \HHD\,, \\[4pt]
\phantom{\partial_t} u(t, \mathbf{x}) = 0\,,
& t \in [0, T],\; \mathbf{x} \in \partial \HHD\,,\\[4pt]
\phantom{\partial_t} u(0, \mathbf{x}) = u_0(\mathbf{x})\,,
& \mathbf{x} \in \HHD\,.
\end{array}
\right.
\end{eqnarray}
Accordingly, we denote the space of grid functions restricted to this domain by
\[
\ell_{h,D}^2\coloneqq\left\{(v^h_{i,j})_{i\in Z_1, j\in Z_2} \in \ell_h^2, \rm{where}~Z_1 = \overline{- N, N}\,, Z_2 = \overline{1, M}\right\}\,,
\]
where $N = \left\lfloor \frac{D}{h} \right\rfloor$ and $M = \left\lfloor \frac{1}{h}\left(D - \frac{1}{D}\right) \right\rfloor + 1$.
As $\ell_{h,D}^2 \subset \ell_h^2$, we equip it with the same norm, whilst, for a function $v\in \mathrm{L}^2(\HH)$, we define the projection on the space of grid functions $\ell^2_{h,D}$ as the corestriction of~\eqref{eq:projection-first-lapl}:
\[
    \Pi_{h, D} : \mathrm{L}^2(\HH) \longrightarrow \ell_{h,D}^2, \qquad (\Pi_{h, D} v)_{i, j} \coloneqq (\Pi_{h} v)_{i, j}.
\]
Finally, we define the discrete Laplacian restricted to $\HHD$ as
\[
    \Delta^{(1)}_{h, D}v^h \coloneqq \Delta^{(1)}_h v^h\,,
\]
together with the convention that $v^h_{i,j}=0$ for $(i,j)\notin Z_1\times Z_2$.

\begin{rem}\label{rem:consistency-first-laplace-sos}
Choosing $D\coloneqq D_{h,\gamma,\zeta}$ as in~\eqref{Intro:eq:cine-PLM-i-D} and applying Propositions \ref{prop:tail-control} and \ref{prop:semigrup-marginire} to $u$, the solution of~\eqref{eq:heat-hyperbolic-source} with $u_0$ and $f$ satisfying Hypothesis~\ref{hyp:biserica-Grivitei}, we obtain that:
\[\|\Delta^{(1)}_{h,D} \Pi_{h,D} u(t) - \Pi_{h,D} \Delta_g u(t)\|_{\ell_{h,D}^2} \leq h^2 C_{\gamma,\zeta,T} \left( \|u_0\|_{\mathcal{M}} + \|f\|_{C([0,T],\mathcal{M})} \right).\]
\end{rem}

\subsection{Poincar\' e inequality}
In this section, we will show that the numerical scheme corresponding to the aforementioned discrete Laplace operator preserves the asymptotic decay of the continuous homogeneous heat equation on $\HH$. More precisely, we will show that the Poincar\'e inequality corresponding to this discrete Laplacian has the same optimal constant as its continuous counterpart in Section \ref{sec:poincare-continuous}:
\begin{thm}[Poincar\' e inequality for the first discrete Laplacian]
\label{thm:first-laplacian-poincare}
Assume that $v \coloneqq v^h \in \ell_h^2$ has compact support. Then, the following inequality holds true:
\begin{equation}\label{eq:poincare-first-lapl}
-\langle \Delta^{(1)}_h v, v \rangle_{\ell_h^2} \geq \frac{1}{4} \|v\|_{\ell_h^2}^2. 
\end{equation}
Moreover, the constant $\frac{1}{4}$ is sharp, meaning that it cannot be improved.
\end{thm}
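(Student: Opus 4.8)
The plan is to decouple the two lattice directions and reduce \eqref{eq:poincare-first-lapl} to a one–dimensional weighted discrete Hardy inequality whose optimal constant is $\tfrac14$. The first observation is that the weight $j^2-\tfrac14$ in the definition \eqref{eq:discrete-first-laplace} of $\Delta^{(1)}_h$ cancels \emph{exactly} against the weight $(j^2-\tfrac14)^{-1}$ in the inner product of $\ell_h^2$, so that $-\langle\Delta^{(1)}_h v,v\rangle_{\ell_h^2}$ equals the plain Euclidean discrete Dirichlet energy of $v$ on the half-lattice $\mathbb{Z}\times\mathbb{N}^\ast$, with the Dirichlet convention $v_{i,0}=0$. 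Since $v$ has compact support, summation by parts produces no boundary terms at infinity and gives
\[
-\langle\Delta^{(1)}_h v,v\rangle_{\ell_h^2}=\sum_{j\ge1}\sum_{i\in\mathbb{Z}}\big(v_{i+1,j}-v_{i,j}\big)^2+\sum_{i\in\mathbb{Z}}\sum_{j\ge0}\big(v_{i,j+1}-v_{i,j}\big)^2,
\]
where the $j=0$ term of the inner sum is $v_{i,1}^2$. Dropping the nonnegative horizontal sum and summing over $i$, it suffices to prove that for every finitely supported $(a_j)_{j\ge1}$, with $a_0:=0$,
\[
\sum_{j\ge0}(a_{j+1}-a_j)^2\ \ge\ \frac14\sum_{j\ge1}\frac{a_j^2}{j^2-\tfrac14}.
\]
Here the weight $\tfrac{1}{j^2-1/4}=\big|\CC_h^{i,j}\big|_g$ is exactly the hyperbolic cell area, and the constant $\tfrac14$ is the discrete shadow of the bottom of the spectrum of $-\Delta_g$, whose generalized eigenfunction is $x_2^{1/2}$; this dictates the correct comparison function below.

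To prove the displayed $1$D inequality I would use a ground–state substitution based on the \emph{discrete Picone identity}
\[
(a_{j+1}-a_j)^2=(\varphi_{j+1}-\varphi_j)\!\left(\frac{a_{j+1}^2}{\varphi_{j+1}}-\frac{a_j^2}{\varphi_j}\right)+\varphi_j\varphi_{j+1}\!\left(\frac{a_j}{\varphi_j}-\frac{a_{j+1}}{\varphi_{j+1}}\right)^2,\qquad\varphi_j>0,
\]
which holds because the difference of the two sides equals the last term, a square. I would apply it with $\varphi_j:=\sqrt{j}$ (the lattice discretisation of $x_2^{1/2}$), sum over $j\ge1$, perform an Abel summation on the telescoping part — no contribution arises at $j=0$ because $a_0=0$ — and add back the $j=0$ term $a_1^2$; discarding the nonnegative Picone remainder this yields
\[
\sum_{j\ge0}(a_{j+1}-a_j)^2\ \ge\ \big(2-\sqrt{2}\big)a_1^2+\sum_{k\ge2}\frac{2\sqrt{k}-\sqrt{k+1}-\sqrt{k-1}}{\sqrt{k}}\,a_k^2.
\]
The inequality then follows from the two pointwise ``supersolution'' estimates: $2-\sqrt{2}\ge\tfrac13$ (immediate, and exactly the $k=1$ weight $\tfrac14(1-\tfrac14)^{-1}$), and
\[
2\sqrt{k}-\sqrt{k+1}-\sqrt{k-1}\ \ge\ \frac{\sqrt{k}}{4\big(k^2-\tfrac14\big)}\qquad(k\ge1).
\]
For the latter I would note that for $k\ge1$ both sides and $2-(4k^2-1)^{-1}$ are positive, square, and substitute $u:=(4k^2-1)^{-1}\in(0,\tfrac13]$, under which $1-k^{-2}=\tfrac{1-3u}{1+u}$: the estimate becomes $(1-2u+\tfrac12u^2)^2(1+u)\ge1-3u$, whose left minus right side expands to $u^2\big(1+3u-\tfrac74u^2+\tfrac14u^3\big)$, with the bracket clearly positive on $(0,\tfrac13]$. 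This proves $-\langle\Delta^{(1)}_h v,v\rangle_{\ell_h^2}\ge\tfrac14\|v\|_{\ell_h^2}^2$.

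For the sharpness of $\tfrac14$ I would test with separated functions $v_{i,j}=\chi_L(i)\,a_j$, where $\chi_L$ is the linear tent supported on $[-L,L]$ and $a_j=\sqrt{j}\,(1-j/N)_+$ is a gently truncated copy of $\varphi_j=\sqrt{j}$. A direct computation gives
\[
-\langle\Delta^{(1)}_h v,v\rangle_{\ell_h^2}=\Big(\sum_{j\ge1}a_j^2\Big)\sum_i(\chi_L(i+1)-\chi_L(i))^2+\Big(\sum_i\chi_L(i)^2\Big)\sum_{j\ge0}(a_{j+1}-a_j)^2,\qquad \|v\|_{\ell_h^2}^2=\Big(\sum_i\chi_L(i)^2\Big)\sum_{j\ge1}\frac{a_j^2}{j^2-\tfrac14}.
\]
Using the Picone identity now as an equality together with the expansions $\sqrt{k}\,(2\sqrt{k}-\sqrt{k+1}-\sqrt{k-1})=\tfrac14k^{-1}+O(k^{-3})$ and $\tfrac{k}{k^2-1/4}=k^{-1}+O(k^{-3})$, one checks that $\sum_{j\ge0}(a_{j+1}-a_j)^2=\tfrac14\sum_j\tfrac{a_j^2}{j^2-1/4}+O(1)$ while $\sum_j\tfrac{a_j^2}{j^2-1/4}=\sum_{j=1}^N\tfrac{(1-j/N)^2}{j}+O(1)\sim\log N\to\infty$; hence the $1$D Rayleigh quotient tends to $\tfrac14$. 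Keeping $N$ fixed and letting $L\to\infty$ kills the horizontal contribution (for the tent, $\sum_i(\chi_L(i+1)-\chi_L(i))^2/\sum_i\chi_L(i)^2=O(L^{-2})$), after which $N\to\infty$ drives the whole quotient to $\tfrac14$, so the constant cannot be improved.

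The main obstacle is the pointwise supersolution estimate for $\varphi_j=\sqrt{j}$: it is true but genuinely tight — the ratio of its two sides tends to $1$ as $k\to\infty$ — so neither a crude Taylor expansion nor a concavity bound suffices, and one really needs the exact polynomial reduction in $u$. A secondary technical point, on the sharpness side, is that the relevant energies grow only logarithmically in $N$, so one must keep careful track of the $O(1)$ defect and Picone-remainder terms and take the two limits in the correct order ($L\to\infty$ first, then $N\to\infty$).
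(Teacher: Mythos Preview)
Your proposal is correct and follows essentially the same route as the paper's proof: summation by parts to obtain the unweighted Dirichlet energy, reduction to the one-dimensional Hardy-type inequality, ground-state substitution with $\varphi_j=\sqrt{j}$ (your Picone identity is exactly the identity \eqref{eq:poincare-first-unidimnesional-e1} in disguise, with the same remainder $\sqrt{j(j+1)}\big(a_j/\sqrt{j}-a_{j+1}/\sqrt{j+1}\big)^2$), the same pointwise supersolution estimate, and for sharpness a cutoff of $\sqrt{j}$ tensored with a linear tent in $i$. The only differences are cosmetic: you package the algebraic identity as a discrete Picone formula and supply a full proof of the pointwise estimate (which the paper leaves as ``direct computation''), and for sharpness you use the linear cutoff $a_j=\sqrt{j}\,(1-j/N)_+$ whereas the paper uses the logarithmic cutoff $w_j^n=\sqrt{j}\cdot(2\log n-\log j)/\log n$ on $[n,n^2]$; both choices give a defect of order $O(1/\log N)$ and the limit order $L\to\infty$ then $N\to\infty$ is the same.
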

In the proof of Theorem \ref{thm:first-laplacian-poincare} we use the following lemma, which provides an integration by parts formula for the discrete Laplace operator $\Delta_h^{(1)}$:
\begin{prop}
\label{prop:first-laplacian-dissipative}
    For any compactly supported sequence $v^h\in \ell_h^2$ (i.e., only a finite number of elements are not null), the following equality holds true:
    \begin{equation} \label{eq:integration_parts}
        \langle \Delta^{(1)}_h v^h, v^h\rangle = - \displaystyle \sum_{i\in\mathbb{Z}, j\in\mathbb{N}} \left[\left(v^h_{i+1, j} - v^h_{i, j} \right)^2 + \left(v^h_{i, j+1} - v^h_{i, j} \right)^2 \right]\,,
    \end{equation}
with the convention $v^h_{i, 0} = 0, \; \forall~i\in\mathbb{Z}$.
\end{prop}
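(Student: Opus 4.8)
\emph{Proof proposal.} The plan rests on a single algebraic observation: the weight $j^2-\tfrac14$ in the definition \eqref{eq:discrete-first-laplace} of $\Delta^{(1)}_h$ is exactly the reciprocal of the cell weight $\left|\CC_h^{i,j}\right|_g=\tfrac{1}{j^2-1/4}$ that defines the $\ell_h^2$ scalar product. Consequently the two weights cancel and
\[
\langle \Delta^{(1)}_h v^h, v^h\rangle
= \sum_{i\in\Z,\; j\in\NN^\ast}\left(v^h_{i+1,j}+v^h_{i-1,j}+v^h_{i,j+1}+v^h_{i,j-1}-4v^h_{i,j}\right)v^h_{i,j},
\]
i.e., the bilinear form of the ordinary five-point graph Laplacian on $\Z\times\NN^\ast$ with the Dirichlet convention $v^h_{i,0}=0$. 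Since $v^h$ has compact support all the sums involved are finite, so I may split them into a horizontal and a vertical part and rearrange freely.

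For the horizontal part I fix $j\geq1$, write the second difference as $(v^h_{i+1,j}-v^h_{i,j})-(v^h_{i,j}-v^h_{i-1,j})$, and perform a discrete summation by parts in $i$; compact support kills the boundary terms as $i\to\pm\infty$, leaving $-\sum_{i\in\Z}(v^h_{i+1,j}-v^h_{i,j})^2$. Summing over $j\geq1$, and noting that the would-be $j=0$ term $(v^h_{i+1,0}-v^h_{i,0})^2$ vanishes by the convention $v^h_{i,0}=0$, the horizontal contribution becomes $-\sum_{i\in\Z,\,j\in\NN}(v^h_{i+1,j}-v^h_{i,j})^2$. For the vertical part I fix $i$, set $d_j:=v^h_{i,j+1}-v^h_{i,j}$ for $j\geq0$, and rewrite the corresponding sum as $\sum_{j\geq1}(d_j-d_{j-1})v^h_{i,j}$. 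Re-indexing the $d_{j-1}$ term, and using $v^h_{i,0}=0$ to extend the remaining sum down to $j=0$, this telescopes to $\sum_{j\geq0}d_j(v^h_{i,j}-v^h_{i,j+1})=-\sum_{j\geq0}(v^h_{i,j+1}-v^h_{i,j})^2$, whose $j=0$ summand is exactly $(v^h_{i,1})^2$. Summing over $i\in\Z$ and adding the two contributions yields \eqref{eq:integration_parts}.

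The only delicate point — which I would flag as the main, if modest, obstacle — is the bookkeeping at the boundary $j=0$: one must check that the telescoping in the vertical direction leaves no stray term beyond the $(v^h_{i,1})^2$ already present on the right-hand side, and this is precisely where the homogeneous Dirichlet convention $v^h_{i,0}=0$ enters. Everything else is a routine Abel-summation computation, made rigorous by the compact support of $v^h$.
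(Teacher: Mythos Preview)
Your proof is correct. The paper in fact states this proposition without proof, treating it as a routine computation; your argument is precisely the standard Abel-summation proof the authors presumably have in mind, including the key observation that the weight $j^2-\tfrac14$ in $\Delta^{(1)}_h$ cancels against the cell measure $\left|\CC_h^{i,j}\right|_g=(j^2-\tfrac14)^{-1}$ in the $\ell_h^2$ scalar product, and the careful bookkeeping at the $j=0$ boundary via the Dirichlet convention.
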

\begin{proof}[{Proof of Theorem \ref{thm:first-laplacian-poincare}}]
    According to Proposition \ref{prop:first-laplacian-dissipative}, we need to prove the following inequality:
\begin{equation}
    \label{eq:poincare-first-explicit}
    \sum_{i\in \Z, j\in \NN} \left[(v_{i+1,j}-v_{i,j})^2 + (v_{i,j+1}-v_{i,j})^2 \right]\geq \frac{1}{4} \sum_{i\in \Z, j\in \NN} \frac{1}{j^2-1/4} |v_{i,j}|^2,
\end{equation}
with the convention $v_{i,0}=0$, $\forall i\in \Z$.
Since the weights involved in the right-hand side term above only depend on the second variable (i.e. $j\in \NN$), we consider first a single-variable sequence $(w_j)_{j\in \NN^\ast}$ and claim that:
\begin{equation}
    \label{eq:poincare-first-unidimnesional}
     \sum_{j\in \NN}  (w_{j+1}-w_{j})^2 \geq \frac{1}{4} \sum_{j\in \NN} \frac{1}{j^2-1/4} |w_{j}|^2,
\end{equation}
of course, employing the convention $w_0=0$.
The proof of this claim is inspired by the study of a discrete Hardy inequality on the line \cite{david2022DiscreteHardyLine}. Indeed, taking into account that $\sum_{j\geq0} w_{j}^2 =\sum_{j\geq0} w_{j+1}^2$, we write the left-hand side term above as:
\begin{equation}
    \label{eq:poincare-first-unidimnesional-e1}
     \sum_{j\in \NN}  (w_{j+1}-w_{j})^2 = \sum_{j\in \NN^\ast} \left(2-\sqrt{\frac{j-1}{j}}-\sqrt{\frac{j+1}{j}}\right) |w_j|^2 +  \sum_{j\in \NN^\ast} \left(\sqrt[4]{\frac{j+1}{j}}w_j-\sqrt[4]{\frac{j}{j+1}}w_{j+1}\right)^2.
\end{equation}
Now, \eqref{eq:poincare-first-unidimnesional} follows easily since, by direct computation, we have that for every $j\geq 1$,
\begin{equation*}
    2-\sqrt{\frac{j-1}{j}}-\sqrt{\frac{j+1}{j}} \geq \frac{1}{4(j^2-1/4)}.
\end{equation*}
To obtain the desired inequality \eqref{eq:poincare-first-explicit}, all we need to do is to sum up the instances of \eqref{eq:poincare-first-unidimnesional} for every sequence $(v_{i,j})_{j\in \NN^\ast}$ as $i\in \Z$.

To prove the sharpness of the constant $\frac{1}{4}$, first we will construct a minimizing sequence for the one-dimensional quantity:
\begin{equation*}
\mathcal{I}(w):=\frac{\displaystyle\sum_{j\in \NN} (w_{j+1}-w_{j})^2}{\displaystyle\sum_{j\in \NN} \frac{1}{j^2-1/4} |w_{j}|^2}.
\end{equation*}
Since a minimizing sequence should annihilate the last term of \eqref{eq:poincare-first-unidimnesional-e1}, we will consider $w_j=\sqrt{j}$, but at the same time we properly cut it off in order to become compactly supported. More precisely, for every $n\in \NN^\ast$, $n\geq 2$, we define (see \cite[Section 2.2]{david2022DiscreteHardyLine}):
\begin{equation*}
w^n_j\coloneqq\sqrt{j}\cdot\left\{\begin{array}{cc}
    1,& 1\leq j<n;\\
    \frac{2\log n - \log j}{\log n}, & n\leq j\leq n^2;\\
    0, & j>n^2.
\end{array}\right.
\end{equation*}

From \eqref{eq:poincare-first-unidimnesional} and \eqref{eq:poincare-first-unidimnesional-e1} we obtain that: 
\begin{equation*}
0\leq I(w^n)-\frac{1}{4}=\frac{\displaystyle\sum_{j\in \NN^\ast} \left(\sqrt[4]{\frac{j+1}{j}}w^n_j-\sqrt[4]{\frac{j}{j+1}}w^n_{j+1}\right)^2+\sum_{j\in \NN^\ast}\left(2-\sqrt{\frac{j-1}{j}}-\sqrt{\frac{j+1}{j}} -\frac{1}{4j^2-1}\right) |w_j^n|^2}{\displaystyle\sum_{j\in \NN} \frac{1}{j^2-1/4} |w_{j}^n|^2}.
\end{equation*}

Computing all the terms explicitly and using standard properties of the generalized harmonic series,
we deduce that:
\begin{equation}\label{eq:poincare-first-unidim-estimate-5}
0\leq \mathcal{I}(w^n)-\frac{1}{4}\lesssim \frac{\frac{1}{\log n}+\frac{\pi^2}{6}}{\log n} \lesssim \frac{1}{\log n}\xrightarrow{n\to \infty} 0,
\end{equation}
which means that $(w^n)_{n\geq 2}$ is a minimizing sequence for the functional $I$, thus proving the sharpness of the constant $\frac{1}{4}$ for the one-dimensional Poincar\' e-type inequality \eqref{eq:poincare-first-unidimnesional}.

Based on the one-dimensional minimising sequence $(w^n)_{n\geq 2}$, we now construct a minimizing double sequence $(v^{m,n})_{m,n\geq 2}$ for the functional:
\begin{equation*}
\mathcal{J}(v):=\frac{\displaystyle\sum_{i\in \Z, j\in \NN} \left[(v_{i+1,j}-v_{i,j})^2 + (v_{i,j+1}-v_{i,j})^2 \right]}{\displaystyle\sum_{i\in \Z, j\in \NN} \frac{1}{j^2-1/4} v_{i,j}^2},
\end{equation*}
which, in turn, will prove the optimality of the constant $\frac{1}{4}$ in \eqref{eq:poincare-first-explicit}. Indeed, let us define $(v^{m,n}_{i,j})_{i\in \Z,j\in \NN^\ast}$ in the following way:
\[v^{m,n}_{i,j}\coloneqq\left\{\begin{array}{cc}
w_j^n \left(1-\frac{|i|}{m}\right), & |i|\leq m\\
0, & \text{otherwise}
\end{array}\right.
\]
Direct computations, together with \eqref{eq:poincare-first-unidim-estimate-5}, lead to:
\[
\begin{aligned}
0\leq \mathcal{J}(v^{m,n})-\frac{1}{4} &\lesssim \frac{1}{\log n} + \frac{\displaystyle \sum_{j\in \NN} |w^n_j|^2}{\displaystyle \sum_{j\in \NN} \frac{1}{j^2-1/4} |w^n_j|^2 } \cdot \frac{6}{2m^2+1}.
\end{aligned}
\]
In the conclusion, for a fixed $\varepsilon>0$, we choose $n$ small enough such that $\frac{1}{\log n}\leq \frac{\varepsilon}{2}$ and then pass $m$ to infinity to get that the constant $\frac{1}{4}$ is sharp for \eqref{eq:poincare-first-explicit}.
\end{proof}

\section{The second discrete Laplacian}
\label{sec:second-lapl}
\subsection{Finite difference grid}
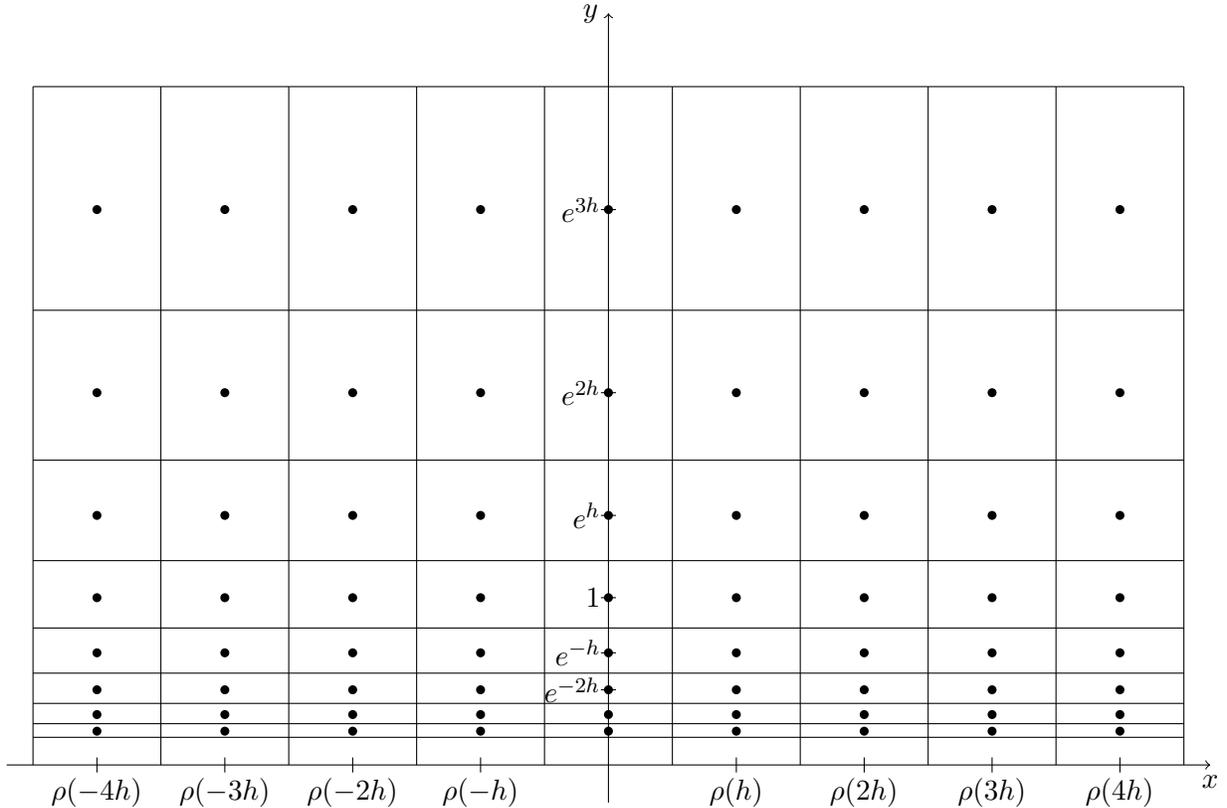
\begin{figure}[h]
\begin{center}
\begin{tikzpicture}
  \draw[->] (-8,0) -- (8,0) node[right] {};
  \draw[->] (0,-0.5) -- (0,10) node[above] {};

\def\ra{2.5}
\def\sx{1.7}
  \foreach \x in {-5,-4,-3,-2,-1,0,1,2,3,4}
    \draw ({\sx*(\x+0.5)},0) -- ({\sx*(\x+0.5)},{exp((5+0.5)/\ra)});

  \node[below] at (8,0) {$x$};
  \node[left] at (0,10) {$y$};

  \foreach \x in {-4,-3,-2,-1,0,1,2,3,4}
    \foreach \y in {-2,...,5}
      \node[circle,fill,inner sep=1.2pt] at ({\sx*\x},{exp(\y/\ra)}) {};

  \foreach \x in {-5,-4,-3,-2,-1,0,1,2,3}
    \foreach \y in {-3,...,5}
      \draw ({\sx*(\x+0.5)},{exp((\y+0.5)/\ra)}) -- ({\sx*(\x+1.5)},{exp((\y+0.5)/\ra)});

  \foreach \x in {-4,-3,-2,2,3,4}
  {
    \draw ({\sx*\x},-0.1) -- ({\sx*\x},0.1) node[below=4pt] {$\x\rho(h)$};
  }
  \foreach \x in {-1}
  {
    \draw ({\sx*\x},-0.1) -- ({\sx*\x},0.1) node[below=4pt] {$-\rho(h)$};
  }
   \foreach \x in {1}
  {
    \draw ({\sx*\x},-0.1) -- ({\sx*\x},0.1) node[below=4pt] {$\rho(h)$};
  }
  \foreach \y in {-2,2,3}
  {
    \draw (-0.1,{exp((\y+2)/\ra)}) -- (0.1,{exp((\y+2)/\ra)}) node[left=2pt] {$e^{\y h}$};
  }
    \foreach \y in {-1}
  {
    \draw (-0.1,{exp((\y+2)/\ra)}) -- (0.1,{exp((\y+2)/\ra)}) node[left=2pt] {$e^{-h}$};
  }
 \foreach \y in {0}
  {
    \draw (-0.1,{exp((\y+2)/\ra)}) -- (0.1,{exp((\y+2)/\ra)}) node[left=2pt] {$1$};
  }
   \foreach \y in {1}
  {
    \draw (-0.1,{exp((\y+2)/\ra)}) -- (0.1,{exp((\y+2)/\ra)}) node[left=2pt] {$e^{h}$};
  }

\end{tikzpicture}
\caption{\label{fig:second-grid} The grid corresponding to the second discrete Laplacian on $\mathbb{H}^2$.  $\rho(h)=2\sinh\left(\frac{h}{2}\right).$}
\end{center}
\end{figure}

The design of the second variant of discrete Laplacian on $\HH$ aims to balance between alignment to the hyperbolic geometry and the computational simplicity of finite differences. The primary feature of the grid is that, along each vertical and horizontal line, the hyperbolic width of the divisions remains constant. However, while the division width is consistent across different vertical lines, the horizontal lines still retain a subtle influence from Euclidean geometry: the grid points on the same horizontal line become hyperbolically far apart as the horizontal line approaches the baseline. More precisely, this grid, depicted in Figure \ref{fig:second-grid}, is defined by the nodes:
\begin{equation} \label{eq:spatial-grid-2}
    (i\rho(h),e^{jh}) \in \HH\,, \quad \, i, j \in \mathbb{Z}\,,
\end{equation}
where
\begin{equation}
    \label{defRho}
    \rho(h)=2\sinh\left(\frac{h}{2}\right)\,, \quad h>0\,.
\end{equation}

The particular choice of $\rho(h)$ in \eqref{defRho} is justified by the fact that, on the horizontal line $x_2=1$, the hyperbolic distances between consecutive points is exactly $h$. We also remark, that, as $h$ approaches zero, $\rho(h)\sim h$. Furthermore, the hyperbolic distance between any consecutive nodes on a vertical line of this grid is $h$.

\paragraph{Finite difference cell.} The choice of the discretisation \eqref{eq:spatial-grid-2} induces the partition of $\HH$ into cells, similar to the ones in Section \ref{sec:construction-first-lapl}:
\begin{equation}
    \label{eq:cell-2}
    \CC_{h}^{i,j}\coloneqq \left[\left(i-\frac 1 2\right)\rho(h),\left(i+\frac 1 2 \right)\rho(h)\right]\times \left[e^{jh-\frac h 2},e^{jh+\frac h 2}\right].
\end{equation}
The hyperbolic area of this cell is then:
\begin{equation}\label{eq:cell-area-lapl-2}
\left|\CC_{h}^{i,j}\right|_{g}:=\int_{i\rho(h)-\rho(h)/2}^{i\rho(h)+\rho(h)/2}\int_{e^{jh-h/2}}^{e^{jh+h/2}}\frac{1}{x_2^2} \dd{\x}=(\rho(h))^2e^{-jh}.
\end{equation}

\paragraph{Scalar product, norm and projection on grid}
Similar to Section \ref{sec:construction-first-lapl}, this newly defined grid induces a scalar product, a Hilbert space norm and a projection on the space of grid functions:
\[\ell_h^2 \coloneqq \left\{ \left(v^h_{i, j}\right)_{i,j \in \mathbb{Z}} : \|v^h\|_{\ell_h^2}^2 \coloneqq\langle v^h,v^h\rangle_{\ell_h^2} < +\infty\right\},\]
where the scalar product is defined as:
\[\langle u^h,v^h \rangle_{\ell_h^2} \coloneqq \sum_{i,j\in \Z} (\rho(h))^2 e^{-jh} u^h_{i,j} v^h_{i,j}.\]

In this setting, the projection operator takes the following form:
\begin{equation}\label{eq:projection-2}  \Pi_h : \mathrm{L}^2(\HH) \longrightarrow \ell_h^2,  \qquad (\Pi_h v)_{i, j} \coloneqq \frac{e^{jh}}{(\rho(h))^2} \displaystyle \int_{\mathcal{C}_h^{i, j}} v(x_1, x_2) \dfrac{1}{x_2^2} \dd{\x}.
\end{equation}
The contractivity of the projection $\Pi_h$ also holds in this case: $\|\Pi_h v\|_{l_h^2} \leq \|v\|_{{\rm L}^2(\HH)}$.

\begin{rem}\label{rem:equidistant-complicated}
We have chosen to adapt this grid only partially to the geometry of the half-plane model of $\HH$ for both theoretical and practical reasons, because, unlike the Euclidean space, the hyperbolic space expands exponentially as the distance to any fixed point increases.  We refer to \cite[Figure 2]{YuDeSa2019} for the behaviour of distances towards the infinity boundary $x_2=0$ of the half-plane model and note that, in an equidistant grid, each node on one row would require two nodes on the row below. Because of that, it is impossible to construct a hyperbolic equidistant grid both axes' direction and to index it by $\Z\times \Z$, such that the neighbours have consecutive indexes.
\end{rem}

\subsection{Discrete Laplace operator and accuracy}\label{sec:construction-second-lapl} 
Based on the grid illustrated in Figure\eqref{fig:second-grid}, we develop our second discrete FDM Laplace operator in the spirit of \eqref{eq:abstract-weights}. Once again, our aim is to obtain an accuracy error $Err_h(v)\coloneqq\|\Delta^{(2)}_h \Pi_h v - \Pi_h \Delta_g v\|_{\ell_h^2}$ that vanishes as $h$ approaches $0$. 

Using the change of variables $x_1 \leftarrow x_1\pm \rho(h)$ and $x_2\leftarrow x_2\, e^{\pm h}$, we obtain:
\begin{equation}\label{eq:2-lapl-NSEV}
\begin{aligned}
    Err_h(v)^2 &= \sum_{i,j\in\Z} \frac{e^{jh}}{(\rho(h))^2} \left\{ \int_{\CC_h^{i,j}} \left[\est\, v(x_1+ \rho(h),x_2)+\vest\, v(x_1- \rho(h),x_2)+\nord\, v(x_1,x_2e^h)+ \sud\, v(x_1,x_2e^{-h}) \right]\frac{1}{x_2^2} \dd{\x}\right.\\
     &\quad \left.  +\int_{\CC_h^{i,j}} \left[\punct\, v(x_1,x_2)\frac{1}{x_2^2}- \left(\partial^2_{x_2} v(x_1,x_2)+\partial^2_{x_2} v(x_1,x_2)\right)\right]\dd{\x}\right\}^2,
\end{aligned}\end{equation}
where the weights $\nord,\sud,\est,\vest$ and $\punct$ play the same role as in \eqref{eq:abstract-weights}. To identify the weights corresponding to the horizontal direction, we emply  a Taylor expansion similar to \eqref{eq:croytoru-1}:
\begin{equation}\label{eq:croytoru-3}
\begin{aligned}
    \frac{e^{2jh}}{x_2^2\, \rho(h)^2}\left[v(x_1\pm \rho(h),x_2) - 2v(x_1,x_2)\right] - \partial_{x_1}^2 v(x_1,x_2) &= \left[\frac{e^{2jh}}{x_2^2}-1 \right]\partial_{x_1}^2 v(x_1,x_2)   +\mathcal{O}(h^2).
\end{aligned}
\end{equation}
The expression above is of order $\mathcal{O}(h)$ by the construction of the grid.

For the vertical direction, we perform two Taylor expansions for the function $\varphi(s)=v(x_1,x_2+sx_2)$ on the intervals $\left(e^{-h}-1,0\right)$ and $\left(0,e^{h}-1\right)$, respectively. A linear combination of those expansions leads to:
\begin{equation}\label{eq:croytoru-4}
    \begin{aligned}
    &\frac{2}{e^h+1} v(x_1,x_2e^h)+\frac{2 e^h}{e^h+1} v(x_1,x_2e^{-h}) - 2 v(x_1,x_2) -  (\rho(h))^2 x_2^2 \partial^2_{x_2} v(x_1,x_2) \\
    &\quad = \underbrace{(\rho(h))^4\left[\frac{1}{3} x_2^3 \partial_{x_2}^3 v(x_1,x_2) + \frac{e^{h/2}}{e^h+1} \int_0^1 \sum_{\pm} e^{\pm 3h/2} x_2^4 \partial^4_{x_2} v(x_1,x_2+\upsigma (e^{\pm h}-1)x_2)\frac{(1-\upsigma)^3}{3} \dd{\upsigma}\right]}_{\mathcal{O}(h^4)}.
    \end{aligned}
\end{equation}

By \eqref{eq:2-lapl-NSEV}-\eqref{eq:croytoru-4}, we deduce the form of the discrete FDM Laplace operator corresponding to this grid: 
\begin{equation} \label{eq:laplacian-2}
(\Delta^{(2)}_h v^h)_{i,j} \coloneqq \frac{1}{(\rho(h))^2}\left[e^{2jh}(v^h_{i+1,j}+v^h_{i-1,j}-2 v^h_{i,j}) + \frac{2}{e^h+1} v^h_{i,j+1} + \frac{2 e^h}{e^h+1} v^h_{i,j-1}- 2 v^h_{i,j}\right].
\end{equation}

\begin{rem}\label{rem:second-laplace-greutati}
Regarding the choice of the weights of this discrete Laplace operator, we note that:
\begin{enumerate}[1)]
\item Both the horizontal and vertical weights of the discrete Laplace operator resemble the curvature-driven behaviour of diffusion in that particular cell.
\item The weights corresponding to the vertical direction of the half-space model do not depend on the particular place in space where the operator is applied (i.e., do not depend on $i$ and $j$), thus emphasising a universal capture of the curvature of $\HH$ on this particular direction. Moreover, the space invariance of the weights implies that the part of \eqref{eq:laplacian-2} corresponding to the vertical direction is a dissipative operator on the whole grid, i.e., on $\ell_h^2$.
\end{enumerate}
\end{rem}

Since the term $\left[\frac{e^{2jh}}{x_2^2}-1 \right]$ has Euclidean zero mean on the cell $C_h^{i,j}$, one can improve using \eqref{eq:2-lapl-NSEV}-\eqref{eq:croytoru-3} the order of accuracy of $\Delta_h^{(2)}$ to $\mathcal{O}(h^2)$:

\begin{thm}[Accuracy for the second discrete Laplacian]
\label{thm:accuracy-2-lapl}
There exists a universal constant $C>0$ such that for every $v\in \mathcal{M}$ and $h\in (0,\frac 1 2)$, the following estimate holds:
\begin{equation}
\label{eq:consistency-estimate-second-lapl}
Err_h(v)\coloneqq \|\Delta^{(2)}_h \Pi_h v - \Pi_h \Delta_g v\|_{\ell_h^2}\leq  h^2\,\|v\|_{\mathcal{M}},
\end{equation}
\end{thm}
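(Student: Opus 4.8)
I would take \eqref{eq:2-lapl-NSEV} as the starting point; the first observation is that, since the grid \eqref{eq:spatial-grid-2} is indexed by the full lattice $\Z\times\Z$, there are no boundary rows and hence no tail term is needed — this is the structural reason the estimate here is cleaner than that of Theorem~\ref{thm:consistency-first-lapl}. Inserting the Taylor expansions \eqref{eq:croytoru-3} and \eqref{eq:croytoru-4}, each divided by $\rho(h)^2 x_2^2$, rewrites the brace in \eqref{eq:2-lapl-NSEV} as $\int_{\CC_h^{i,j}} r\,\dd{\x}$ with a pointwise residual $r = r_1+r_2$ on $\CC_h^{i,j}$, where $r_1 = \big(\tfrac{e^{2jh}}{x_2^2}-1\big)\partial_{x_1}^2 v$ is the leading piece — only $\mathcal O(h)$ pointwise, but with vanishing $x_2$-average over $[e^{jh-h/2},e^{jh+h/2}]$ (the same computation that produced \eqref{eq:cell-area-lapl-2}) — while $r_2$ collects the genuinely $\mathcal O(h^2)$ contributions: the fourth-order horizontal integral remainder hidden in the $\mathcal O(h^2)$ of \eqref{eq:croytoru-3}, and $\rho(h)^2$ times the $x_2\partial_{x_2}^3 v$-term and the $x_2^2\partial_{x_2}^4 v$-integral coming from \eqref{eq:croytoru-4}. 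Two facts are used throughout: $\rho(h)\asymp h$ and $e^{\pm h}\asymp 1$ for $h\in(0,\tfrac12)$; and, since $\mathrm{O}=(0,1)$, the distance formula \eqref{eq:hyp-distance} gives $d_g(\mathrm{O},\x)\ge|\log x_2|$, whence $x_2^k\le e^{k\,d_g(\mathrm{O},\x)}$ for every $k\ge0$, so that — together with the $\sup_{w>0}$ in \eqref{eq:biserica-Grivitei}, in the coordinate-derivative form of the Remark following Hypothesis~\ref{hyp:biserica-Grivitei} — any ${\rm L}^2(\HH)$-norm of a fixed polynomial-in-$x_2$ multiple of a derivative of $v$ of order at most $6$ is $\lesssim\|v\|_{\mathcal M}$ (the ``absorption principle'').

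For the contribution of $r_2$ — the routine part — I would apply Cauchy–Schwarz on each cell, $\big(\int_{\CC_h^{i,j}}r_2\big)^2\le |\CC_h^{i,j}|_e\int_{\CC_h^{i,j}}r_2^2$, with Euclidean area $|\CC_h^{i,j}|_e=\rho(h)^2 e^{jh}$; the prefactor $\tfrac{e^{jh}}{\rho(h)^2}$ in \eqref{eq:2-lapl-NSEV} then collapses the sum to $\sum_{i,j}e^{2jh}\int_{\CC_h^{i,j}}r_2^2$, and since $e^{2jh}\le e^h x_2^2\lesssim x_2^2$ on $\CC_h^{i,j}$ this is $\lesssim\|x_2^2 r_2\|_{{\rm L}^2(\HH)}^2$. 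Expanding $x_2^2 r_2$ term by term and using Fubini together with translation invariance in $x_1$ (for the $\partial_{x_1}^4$-remainder) and the dilation $x_2\mapsto x_2/c$, $c=1+\sigma(e^{\pm h}-1)\asymp 1$ (for the $\partial_{x_2}^4$-remainder), each summand becomes $\rho(h)^4$ times an ${\rm L}^2(\HH)$-norm of a fixed polynomial-in-$x_2$ multiple of a derivative of $v$ of order at most $4$, hence $\lesssim\rho(h)^4\|v\|_{\mathcal M}^2\asymp h^4\|v\|_{\mathcal M}^2$.

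The term $r_1$ is where the real work lies, since plain Cauchy–Schwarz would yield only $\mathcal O(h)$. Here I would exploit the cancellation: because $\int_{e^{jh-h/2}}^{e^{jh+h/2}}\big(\tfrac{e^{2jh}}{x_2^2}-1\big)\,\dd{x_2}=0$, inside $\int_{\CC_h^{i,j}}r_1\,\dd{\x}$ one may replace $\partial_{x_1}^2 v(x_1,x_2)$ by $\partial_{x_1}^2 v(x_1,x_2)-\partial_{x_1}^2 v(x_1,e^{jh})$ at no cost. Estimating $\big|\tfrac{e^{2jh}}{x_2^2}-1\big|\le e^h-1\lesssim h$ on the cell, writing $\partial_{x_1}^2 v(x_1,x_2)-\partial_{x_1}^2 v(x_1,e^{jh})=\int_{e^{jh}}^{x_2}\partial_{x_1}^2\partial_{x_2}v(x_1,s)\,\dd{s}$ and bounding it by $|x_2-e^{jh}|^{1/2}\lesssim (h e^{jh})^{1/2}$ times the local ${\rm L}^2$-norm in $s$ of $\partial_{x_1}^2\partial_{x_2}v$, and then absorbing the two cell side-lengths (both $\asymp h$, up to the factor $e^{jh}$ in the vertical one), I would obtain $\big|\int_{\CC_h^{i,j}}r_1\,\dd{\x}\big|\lesssim h^{3}e^{3jh/2}\,\|\partial_{x_1}^2\partial_{x_2}v\|_{{\rm L}^2(\CC_h^{i,j})}$. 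Squaring, multiplying by $\tfrac{e^{jh}}{\rho(h)^2}$, summing, and using $e^{4jh}\lesssim x_2^4$ on the cells then gives $\lesssim h^4\|x_2^3\partial_{x_1}^2\partial_{x_2}v\|_{{\rm L}^2(\HH)}^2\lesssim h^4\|v\|_{\mathcal M}^2$ by the absorption principle.

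Adding the two estimates yields $Err_h(v)\le C h^2\|v\|_{\mathcal M}$ with a universal constant $C$, for all $h\in(0,\tfrac12)$, which is the claim; note that only derivatives of order $\le4$ enter, so $C^4$-regularity already suffices. I expect the one genuinely delicate step to be the treatment of $r_1$: one must notice that the $\mathcal O(h)$ coefficient $\tfrac{e^{2jh}}{x_2^2}-1$ contributes nothing to leading order by virtue of its zero cell-average, and then carry out the resulting difference estimate in \emph{weighted} ${\rm L}^2$ rather than ${\rm L}^\infty$, so that the cell side-lengths, the powers of $e^{jh}$ converted into powers of $x_2$, and the exponential weight defining $\|\cdot\|_{\mathcal M}$ all balance to exactly the order $h^2$. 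Everything else is bookkeeping of powers of $x_2$, which the $\sup_{w>0}$ in \eqref{eq:biserica-Grivitei} is designed to absorb.
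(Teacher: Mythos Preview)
Your proposal is correct and follows exactly the approach indicated in the paper: insert the Taylor expansions \eqref{eq:croytoru-3}--\eqref{eq:croytoru-4} into \eqref{eq:2-lapl-NSEV}, treat the genuinely $\mathcal{O}(h^2)$ remainders $r_2$ by cellwise Cauchy--Schwarz, and upgrade the $\mathcal{O}(h)$ term $r_1=(e^{2jh}/x_2^2-1)\partial_{x_1}^2 v$ to $\mathcal{O}(h^2)$ by exploiting its vanishing Euclidean $x_2$-mean on the cell. The paper compresses all of this into a single sentence, whereas you have carefully spelled out the weighted-$\mathrm{L}^2$ bookkeeping (the conversion of $e^{kjh}$ to powers of $x_2$, the dilation/translation for the shifted remainders, and the absorption of $x_2$-powers into $\|\cdot\|_{\mathcal M}$ via $d_g(\mathrm{O},\x)\ge|\log x_2|$); your observation that no boundary-row tail term arises here, in contrast to Theorem~\ref{thm:consistency-first-lapl}, is also the right structural explanation for the cleaner estimate.
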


However, the operator \eqref{eq:laplacian-2} is not dissipative on the whole grid and thus, similar to the construction in Section~\ref{sec:construction-first-lapl}, we denote the space of grid functions corresponding to $\Delta^{(2)}_h$ restricted to $\HHD$  by
\[
\ell_{h,D}^2\coloneqq\left\{(v^h_{i,j})_{i\in Z_1, j\in Z_2} \in \ell_h^2, \rm{with}~Z_1 = \overline{- N, N}\,, Z_2 = \overline{-M, M}\right\}\,,
\]
where $N = \left\lfloor \frac{D}{\rho(h)} \right\rfloor$ and $M =  \left\lfloor \frac{\mathrm{log}(D)}{h} \right\rfloor$. We remark that the presence of the logarithm in the vertical direction leads to a significant reduction of the number of nodes employed in the numerical computation and thus optimising both computation speed and memory. For more details, see Tables~\ref{table01} and~\ref{table02}.

\begin{rem}\label{rem:consistency-second-laplace-sos}
Similarly to Remark~\ref{rem:consistency-first-laplace-sos}, we obtain the desired consistency estimate:
\[\|\Delta^{(2)}_{h,D} \Pi_{h,D} u(t) - \Pi_{h,D} \Delta_g u(t)\|_{\ell_{h,D}^2} \leq h^2 C_{\gamma,\zeta,T} \left( \|u_0\|_{\mathcal{M}} + \|f\|_{C([0,T],\mathcal{M})} \right),\]
where $D \coloneqq D_{h, \gamma, \zeta}$ is chosen according to~\eqref{Intro:eq:cine-PLM-i-D}.
\end{rem}

\subsection{Poincar\' e inequality}
In the sequel, we will prove a Poincar\' e-like inequality for the discrete Laplace operator \eqref{eq:laplacian-2}, with the optimal constant converging to $\frac{1}{4}$ (i.e. the constant in the continuous setting) as $h$ approaches zero:

\begin{thm}[Poincar\' e inequality for the second discrete Laplacian]\label{thm:lapl-2-poincare}
Let $v \coloneqq v^h \in \ell_h^2$ be compactly supported. Then, the following inequality holds true:
\begin{equation}
    - \langle \Delta^{(2)}_h v, v \rangle_{\ell_h^2} \geq C_h \|v\|_{\ell_h^2}^2,
\end{equation}
where the constant $C_h \coloneqq \frac{2 e^h}{(e^h+1)(1+e^{h/2})^2} $ is optimal and satisfies:
\[\lim_{h\to 0} C_h =\frac{1}{4}.\]
\end{thm}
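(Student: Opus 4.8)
The plan is to follow the same two-step strategy as in the proof of Theorem~\ref{thm:first-laplacian-poincare}: first establish an integration-by-parts identity for $\Delta^{(2)}_h$, and then reduce the resulting quadratic-form inequality to a one-dimensional problem in the vertical variable $j$, where the horizontal contribution is manifestly non-negative.

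First I would compute $-\langle \Delta^{(2)}_h v, v\rangle_{\ell_h^2}$ explicitly. Using the definition \eqref{eq:laplacian-2} and the weight $(\rho(h))^2 e^{-jh}$ in the scalar product, the factor $\frac{1}{(\rho(h))^2}$ cancels, and the horizontal part $e^{2jh}(v_{i+1,j}+v_{i-1,j}-2v_{i,j})$ gets multiplied by $e^{jh}$, producing after summation by parts a non-negative term of the form $\sum_{i,j} e^{jh}(v_{i+1,j}-v_{i,j})^2 \geq 0$. Since this horizontal term only helps the inequality, it suffices to bound below the vertical part
\[
-\sum_{i,j\in\Z} e^{-jh}\left[\frac{2}{e^h+1} v_{i,j+1} + \frac{2e^h}{e^h+1} v_{i,j-1} - 2 v_{i,j}\right] v_{i,j}.
\]
As in the first Laplacian, the horizontal weights in the target $\|v\|_{\ell_h^2}^2$ depend only on $j$, so it is enough to prove the one-dimensional inequality: for every compactly supported $(w_j)_{j\in\Z}$,
\[
-\sum_{j\in\Z} e^{-jh}\left[\tfrac{2}{e^h+1} w_{j+1} + \tfrac{2e^h}{e^h+1} w_{j-1} - 2w_j\right]w_j \;\geq\; C_h \sum_{j\in\Z} e^{-jh} w_j^2,
\]
and then sum over $i\in\Z$.

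The core of the argument is this one-dimensional estimate, and I expect it to be the main obstacle — specifically, finding the right weighted telescoping decomposition analogous to \eqref{eq:poincare-first-unidimnesional-e1}. The natural guess, guided by the fact that the extremal profile for the continuous Poincaré inequality on $\HH$ behaves like $e^{\mathrm{dist}/2}$ (here $x_2^{-1/2}$, i.e. $w_j \sim e^{jh/2}$), is to write the vertical quadratic form as a sum of a pointwise term $\sum_j a_j e^{-jh} w_j^2$ with $a_j \equiv$ (some $h$-dependent constant $\geq C_h$) plus a manifestly non-negative square term $\sum_j b_j\big(w_j - \lambda_h w_{j+1}\big)^2$ that vanishes exactly on $w_j = e^{jh/2}$. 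Shifting indices using $\sum_j e^{-jh}w_{j+1}^2 = e^{h}\sum_j e^{-jh}w_j^2$ and $\sum_j e^{-jh}w_{j-1}^2 = e^{-h}\sum_j e^{-jh}w_j^2$ turns the cross terms into diagonal ones; matching coefficients should force $\lambda_h = e^{-h/2}$ up to the overall normalization and yield the pointwise constant
\[
\frac{2}{e^h+1}\left(1 + e^h - 2e^{h/2}\right)\cdot\frac{1}{(1+e^{h/2})^2}\cdot(\text{rescaling}) = \frac{2e^h}{(e^h+1)(1+e^{h/2})^2} = C_h.
\]
I would carry out this coefficient-matching carefully; the computation is elementary but the bookkeeping with the $e^{-jh}$ weights and the index shifts is where errors creep in.

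For optimality, I would mimic the cutoff construction from \cite{david2022DiscreteHardyLine} used in Theorem~\ref{thm:first-laplacian-poincare}: take $w_j = e^{jh/2}$ (the zero of the square term) and multiply by a slowly decaying logarithmic cutoff supported on a large range of $j$, so that the square term contributes only an $o(1)$ relative error while the pointwise term realizes exactly $C_h$. This shows $C_h$ is optimal in the one-dimensional problem; then the two-dimensional optimality follows by tensoring with a triangular cutoff $\big(1-\tfrac{|i|}{m}\big)$ in the horizontal variable and letting $m\to\infty$, exactly as in the last paragraph of the proof of Theorem~\ref{thm:first-laplacian-poincare}, since the horizontal term is $O(1/m^2)$ relative to the main term. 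Finally, $\lim_{h\to 0} C_h = \tfrac14$ is a direct Taylor expansion: $e^h + 1 \to 2$, $(1+e^{h/2})^2 \to 4$, $2e^h \to 2$, giving $2/(2\cdot 4) = 1/4$.
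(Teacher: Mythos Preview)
Your plan coincides with the paper's: integration by parts (Lemma~\ref{eq:lapl-2-dissipative}), discarding the non-negative horizontal term, reducing to a one-dimensional vertical inequality, and writing the latter as a diagonal term plus a perfect square vanishing on $w_j=e^{jh/2}$ (this is exactly identity~\eqref{eq:lapl-2-poincare-unidim-e1}). Two minor points. First, your displayed one-dimensional inequality is missing the factor $(\rho(h))^2$ on the right-hand side (the $\ell_h^2$-norm carries it); once restored, your ``(rescaling)'' placeholder is precisely the identity $C_h(\rho(h))^2=\tfrac{2}{e^h+1}(e^{h/2}-1)^2$. Second, for optimality in $j$ the paper uses a \emph{sharp} cutoff $w_j=e^{jh/2}\mathbf{1}_{|j|\le n}$ rather than a logarithmic one: here the weighted norm $\sum_{|j|\le n}e^{-jh}w_j^2=2n+1$ grows linearly in $n$ while the two boundary contributions from the square term are $O(1)$, so the slow cutoff --- which was genuinely needed in Theorem~\ref{thm:first-laplacian-poincare} because there the norm grew only like $\log n$ --- is unnecessary in this setting. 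Your logarithmic cutoff would also work but is more machinery than required.
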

In the proof of Theorem \ref{thm:lapl-2-poincare}, we make use of the following summation by parts formula:
\begin{lem}
\label{eq:lapl-2-dissipative}
    Let $v^h \in \ell_h^2$ be a compactly supported double sequence (i.e. $v^h_{i.j}$ vanishes except for a finite number of indices $(i,j)\in \Z\times \Z$). Then, the following identity holds true:
    \[\langle \Delta^{(2)}_h v^h, v^h \rangle_{\ell_h^2} = - \sum_{i,j\in \Z} e^{jh} (v^h_{i+1,j}-v^h_{i,j})^2 -\frac{2}{e^h+1} \sum_{i,j\in \Z} \frac{1}{e^{jh}} (v^h_{i,j+1}-v^h_{i,j})^2.\]
\end{lem}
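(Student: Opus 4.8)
\textbf{Proof plan for Lemma \ref{eq:lapl-2-dissipative}.} The plan is to substitute the definition \eqref{eq:laplacian-2} of $\Delta^{(2)}_h$ into the scalar product, use the explicit form $\langle u^h,v^h\rangle_{\ell_h^2}=\sum_{i,j\in\Z}(\rho(h))^2 e^{-jh}u^h_{i,j}v^h_{i,j}$, and then reorganise the resulting double sum by a discrete summation-by-parts (Abel resummation) separately in the horizontal and vertical directions. Since $v^h$ is compactly supported, all sums are finite and every index shift is legitimate with no boundary contributions.

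First I would treat the horizontal part. From \eqref{eq:laplacian-2}, the horizontal contribution to $(\Delta^{(2)}_h v^h)_{i,j}$ is $\frac{1}{(\rho(h))^2}e^{2jh}(v^h_{i+1,j}+v^h_{i-1,j}-2v^h_{i,j})$, so after multiplying by $(\rho(h))^2 e^{-jh}v^h_{i,j}$ the factor $(\rho(h))^2$ cancels and one is left with $\sum_{i,j}e^{jh}(v^h_{i+1,j}+v^h_{i-1,j}-2v^h_{i,j})v^h_{i,j}$. For each fixed $j$ this is the classical one-dimensional discrete Laplacian pairing; shifting the index $i\mapsto i-1$ in the term containing $v^h_{i-1,j}$ and grouping gives $-\sum_{i}e^{jh}(v^h_{i+1,j}-v^h_{i,j})^2$, which after summing over $j$ yields the first term on the right-hand side. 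The weight $e^{jh}$ is an innocent bystander here because it does not depend on $i$.

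Next I would handle the vertical part, which is the only place requiring care. The vertical contribution to $(\Delta^{(2)}_h v^h)_{i,j}$ is $\frac{1}{(\rho(h))^2}\bigl[\frac{2}{e^h+1}v^h_{i,j+1}+\frac{2e^h}{e^h+1}v^h_{i,j-1}-2v^h_{i,j}\bigr]$; pairing against $(\rho(h))^2 e^{-jh}v^h_{i,j}$ produces $\sum_{i,j}e^{-jh}\bigl[\frac{2}{e^h+1}v^h_{i,j+1}+\frac{2e^h}{e^h+1}v^h_{i,j-1}-2v^h_{i,j}\bigr]v^h_{i,j}$. In the term with $v^h_{i,j-1}$ I shift $j\mapsto j+1$, turning its coefficient $e^{-jh}\cdot\frac{2e^h}{e^h+1}$ into $e^{-(j+1)h}\cdot\frac{2e^h}{e^h+1}=\frac{2}{e^h+1}e^{-jh}$, which now matches the coefficient of the $v^h_{i,j+1}$ term. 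Hence the vertical part equals $\frac{2}{e^h+1}\sum_{i,j}e^{-jh}\bigl[v^h_{i,j+1}v^h_{i,j}+v^h_{i,j}v^h_{i,j+1}-\tfrac{e^h+1}{1}v^h_{i,j}\cdot\tfrac{?}{}\bigr]$ — more precisely, one must check that the constant $-2v^h_{i,j}$ term, after the same shift bookkeeping, combines with the cross terms to form a perfect square $-(v^h_{i,j+1}-v^h_{i,j})^2$. To see this one writes $2v^h_{i,j+1}v^h_{i,j}-v^h_{i,j}^2-v^h_{i,j+1}^2=-(v^h_{i,j+1}-v^h_{i,j})^2$ and verifies, using the telescoping identity $\sum_j e^{-jh}v^h_{i,j+1}^2=\sum_j e^{-(j-1)h}v^h_{i,j}^2=e^h\sum_j e^{-jh}v^h_{i,j}^2$ (valid by compact support), that the coefficients align so that the surviving expression is exactly $-\frac{2}{e^h+1}\sum_{i,j}e^{-jh}(v^h_{i,j+1}-v^h_{i,j})^2$. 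Adding the horizontal and vertical contributions gives the claimed identity.

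The main obstacle is purely bookkeeping: in the vertical direction the weight $e^{-jh}$ does depend on $j$, so the two index shifts (one on the $v^h_{i,j-1}$ summand, one used to re-express $\sum_j e^{-jh}v^h_{i,j+1}^2$) must be performed consistently and one must confirm that the exponential factors $e^h$ and $\frac{2e^h}{e^h+1}$ conspire to leave a clean squared difference with coefficient $\frac{2}{e^h+1}e^{-jh}$ rather than some $j$-dependent mess. Once the shifts are lined up the computation is routine, and compact support guarantees there are no boundary terms to track.
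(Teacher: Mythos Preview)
Your plan is correct: substitute \eqref{eq:laplacian-2} into the $\ell_h^2$ scalar product, cancel the $(\rho(h))^2$, and perform Abel resummation separately in $i$ (where the weight $e^{jh}$ is constant, giving the classical identity) and in $j$ (where the shift $j\mapsto j+1$ turns the coefficient $\tfrac{2e^h}{e^h+1}e^{-jh}$ into $\tfrac{2}{e^h+1}e^{-jh}$, and the telescoping $\sum_j e^{-jh}v_{i,j+1}^2=e^h\sum_j e^{-jh}v_{i,j}^2$ makes the diagonal terms collapse to $-2\sum_j e^{-jh}v_{i,j}^2$). Compact support justifies every shift with no boundary contribution.

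The paper itself does not supply a proof of this lemma; it is simply announced as a ``summation by parts formula'' and then used in the proof of Theorem~\ref{thm:lapl-2-poincare}. Your approach is exactly the routine computation the paper is implicitly invoking, so there is nothing to compare. The only blemish is the garbled placeholder expression with the ``$?$'' midway through the vertical computation; when you write this up, replace that line by the clean bookkeeping you describe immediately afterwards, namely $A+B=\tfrac{4}{e^h+1}\sum_j e^{-jh}v^h_{i,j}v^h_{i,j+1}$ and $-\tfrac{2}{e^h+1}\sum_j e^{-jh}\bigl(v^h_{i,j+1}{}^{2}+v^h_{i,j}{}^{2}\bigr)=-2\sum_j e^{-jh}(v^h_{i,j})^{2}$.
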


\begin{proof}[{Proof of Theorem \ref{thm:lapl-2-poincare}}]
First, we write the inequality that we aim to prove:
\begin{equation}
    \label{eq:lapl-2-poincare-explicit}
    \sum_{i,j\in \Z} e^{jh} (v_{i+1,j}-v_{i,j})^2 +\frac{2}{e^h+1} \sum_{i,j\in \Z} \frac{1}{e^{jh}} (v_{i,j+1}-v_{i,j})^2 \geq C_h \, (\rho(h))^2 \sum_{i,j\in \Z} \frac{1}{e^{jh}} |v_{i,j}|^2.
\end{equation}
As in the case of the first Laplacian (Theorem \ref{thm:first-laplacian-poincare}), we prove a Poincar\' e-type inequality for an one-dimensional sequence $(w_j)_{j\in \Z}$:
\begin{equation}\label{eq:lapl-2-poincare-unidim-var}
\frac{2}{e^h+1} \sum_{j\in \Z } \frac{1}{e^{jh}} (w_{j+1}-w_j)^2 \geq C_h\, (\rho(h))^2 \sum_{j\in \Z} \frac{1}{e^{jh}} |w_j|^2,
\end{equation}
which, by direct computation, is equivalent to:
\begin{equation}\label{eq:lapl-2-poincare-unidim}
\sum_{j\in \Z} \frac{1}{e^{jh}} (w_{j+1}-w_j)^2 \geq (e^{h/2}-1)^2 \sum_{j\in \Z} \frac{1}{e^{jh}} |w_j|^2,
\end{equation}
Indeed, this inequality is provd by noticing that the following identity holds:
\begin{equation}
    \label{eq:lapl-2-poincare-unidim-e1}
    \sum_{j\in \Z} \frac{1}{e^{jh}} (w_{j+1}-w_j)^2 = (e^{h/2}-1)^2 \sum_{j\in \Z} \frac{1}{e^{jh}} w_j^2 + \sum_{j\in \Z} \left(\frac{1}{e^{jh/2-h/4}} w_j - \frac{1}{e^{jh/2+h/4}} w_{j+1} \right)^2.
\end{equation}
Eventually, to finish the proof of inequality \eqref{eq:lapl-2-poincare-explicit}, we sum the instances of \eqref{eq:lapl-2-poincare-unidim-var} for $w_j=v_{i,j}$ over $i\in \Z$.

In the sequel, we prove the optimality of the constant $C_h$ for the Poincar\' e inequality \eqref{eq:lapl-2-poincare-explicit}. In this sense, we construct a family of compactly supported double sequences $(v^{m,n}_{i,j})_{i,j\in \Z}$ indexed over the integer parameters $m,n\geq 2$:
\begin{equation}
\label{eq:extremisers-poincare-2}
v^{m,n}_{i,j} = \left\{
\begin{array}{cc}
\left(1-\frac{|i|}{m}\right)e^{jh/2}, & |i|\leq m\,, |j|\leq n;\\
0, & \text{otherwise}.
\end{array}
\right.
\end{equation}
We note that the part depending on $j$ (i.e. $e^{jh/2}$) was chosen as to annihilate the terms of the last sum in \eqref{eq:lapl-2-poincare-unidim-e1}. We finish our proof by showing that the sequence constructed in \eqref{eq:extremisers-poincare-2} is a minimizing sequence for:
\[I(v)=\frac{\displaystyle\sum_{i,j\in \Z} e^{jh} (v_{i+1,j}-v_{i,j})^2 +\frac{2}{e^h+1} \sum_{i,j\in \Z} \frac{1}{e^{jh}} (v_{i,j+1}-v_{i,j})^2} {\displaystyle \sum_{i,j\in \Z} \frac{1}{e^{jh}} v_{i,j}^2}
.\]
Indeed, by direct computation,
\[
\begin{aligned}
I(v^{m,n}) &= \frac{\displaystyle \sum_{i= -m}^{m-1} \frac{1}{m^2} \sum_{|j|\leq n} e^{2jh} + \frac{2}{e^h+1} \sum_{|i|\leq m} \left(1-\frac{|i|}{m}\right)^2 \left[ \sum_{j=-n}^{n-1} \frac{1}{e^{jh}}e^{jh}\left(e^{h/2}-1\right)^2 + 1+\frac{e^{-nh}}{e^{-(n+1)h}} \right] }{\displaystyle  \sum_{|i|\leq m} \left(1-\frac{|i|}{m}\right)^2 (2n+1) } \\
& = \frac{6}{2m^2+1}\frac{\displaystyle  \sum_{|j|\leq n} e^{2jh}}{2n+1} + \frac{2}{e^h+1}  \left[ \frac{2n}{2n+1} \left(e^{h/2}-1\right)^2 + \frac{1+e^h}{2n+1} \right]
\end{aligned}
\]
Taking $n$ large enough and then $m\to \infty$, we obtain that the above quantity approaches:
\[\frac{2}{e^h+1}\left(e^{h/2}-1\right)^2 = C_h (\rho(h))^2\]
and the conclusion follows.
\end{proof}

\section{Convergence of semi-discrete and discrete finite-difference schemes}\label{sec:semi-discret}

The aim of this section is twofold. First, we collect the properties of the two discrete Laplace operators on $\HH$ that will lead to the convergence of the finite-difference numerical scheme to the solution of the continuous problem \eqref{eq:heat-hyperbolic-source}. In the second part, we delve into the proof of the convergence result for the semi-discrete scheme corresponding to an abstract discrete Laplacian satisfying the aforementioned properties. We chose to consider this general setting since it provides sufficient conditions for the convergence of the semi-discrete numerical scheme associated to other discrete grids and Laplace operators one might further develop.

\subsection{An abstract semi-discrete numerical scheme for the heat equation on $\HH$ with source}

Let $(\CC_h^{i,j})_{(i,j)\in Z_1\times Z_2}$ the cells of a numerical grid with parameter $h>0$ on the half-plane model of $\HH$, where $Z_1,Z_2\subseteq \Z$ might depend on $h$. We also introduce the function space $\ell_h^2$ and
projection operator $\Pi_h$ corresponding to this abstract grid, as in Section \ref{sec:construction-first-lapl}. For functions in $\ell_h^2$ we also consider an abstract discrete finite-difference Laplace operator $\Delta_h$ and assume that the following properties are satisfied:
\begin{enumerate}[(L1)]
    \item (Contractivity of the projection) \label{L:contractivity} For every $v\in {\rm L}^2(\HH)$, $\|\Pi_h v\|_{\ell_h^2} \leq \|v\|_{{\rm L}^2(\HH)}$.
    
    \item (Dissipativity) \label{L:dissipativity} The operator $\Delta_h$ is self-adjoint on the Hilbert space $\ell_h^2$, with dense domain $D(\Delta_h)=\{v\in \ell_h^2: \Delta_h v \in \ell_h^2 \}$. Moreover, $\Delta_h$ is a m-dissipative operator in the sense of \cite[Chapter 2]{cazenave}.
    
    \item \label{L:consistency} (Consistency) There exists $h_0>0$ such that for every $u_0$ and $f$ satisfying Hypothesis~\ref{hyp:biserica-Grivitei}, and every time span $T>0$, there exists a constant $C_{u_0,f,T}>0$,  such that the solution $u$ the equation~\eqref{eq:heat-hyperbolic-source} satisfies the following:
    \begin{equation} \label{eq:consistency-discrete-lapl}
    \|\Delta_h \Pi_h e^{t\Delta_g} u(t) - \Pi_h \Delta_g e^{t\Delta_g} u(t)\|_{\ell_h^2}\leq C_{u_0,f,T}\, h^2,\, \forall h\in (0,h_0), t\in [0,T].
    \end{equation}
\end{enumerate}
\begin{rem}\label{rem:well-posedness-L2-estimate-discrete-heat}
For $u_0$ and $f$ satisfying Hypothesis~\ref{hyp:biserica-Grivitei}, let us consider the semi-discrete approximation of~\eqref{eq:heat-hyperbolic-source}:
\begin{eqnarray}\label{eq:heat-discrete-source}
\left\{
\begin{array}{ll}
\partial_t u^h_{i,j}(t) = (\Delta_h u^h(t))_{i,j} + (\Pi_h (f(t)))_{i,j}\,,
& t \in (0, T],\; (i,j)\in Z_1\times Z_2;\\[4pt]
\phantom{\partial_t} u^h_{i,j}(0) = (\Pi_h(u_0))_{i,j},
& \phantom{ t \in (0, \infty),}\;\; (i,j)\in Z_1\times Z_2\,,
\end{array}
\right.
\end{eqnarray}
where the initial data is projected on $\ell_h^2$. Properties (L\ref{L:contractivity}) and (L\ref{L:dissipativity}), alongside Duhamel's formula {\cite[Proposition 4.1.6]{cazenave}}, imply that the Cauchy problem stated above is \emph{well-posed} and the solution takes the form:
\begin{equation}\label{eq:soultion-heat-semidiscrete-source}
u^h(t)=e^{t\Delta_h} u_0+\int_0^t e^{(t-\sigma)\Delta_h} \Pi_h f(\sigma) \dd{\sigma}.
\end{equation}
\end{rem}

\begin{rem}
The results in Sections \ref{sec:first-lapl} and \ref{sec:second-lapl} imply that the two variants of restricted discrete Laplace operators $\Delta_{h,D}^{(1)}$ and $\Delta_{h,D}^{(2)}$ defined on the corresponding finite-dimensional $\ell_{h,D}^2$ spaces satisfy the properties (L\ref{L:contractivity})-(L\ref{L:consistency}), for the particular choice of $D\coloneqq D_{h,\gamma,\zeta}$ as in \eqref{Intro:eq:cine-PLM-i-D}.
\end{rem}

\subsection{Convergence of the semi-discrete scheme and error estimates}

The following theorem characterise the approximation error between the semi-discrete scheme \eqref{eq:heat-discrete-source} and the heat equation with source \eqref{eq:heat-hyperbolic-source}.

\begin{thm}[Convergence of the FDM scheme] Let $(\CC_h^{i,j})_{(i,j)\in Z_1\times Z_2}$ a grid on $\HH$ and $\Delta_h$ a discrete finite-difference operator associated to it, satisfying properties (L\ref{L:contractivity})-(L\ref{L:consistency}) above. Then, for every initial datum $u_0$ and $f$ satishying Hypothesis~\ref{hyp:biserica-Grivitei}, there exists a constant $C_{u_0,f,T}>0$ such that, for every $h\in (0,h_0)$ and $t\in [0,T],$
\[\|u^h(t)-\Pi_h u(t)\|_{\ell_h^2}\leq C_{u_0,f,T}\, h^2, \]
where $u^h$ is the solution of \eqref{eq:heat-discrete-source} and $u$ is the solution of \eqref{eq:heat-hyperbolic-source}.
\end{thm}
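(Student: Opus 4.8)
The plan is to estimate the error $e^h(t) \coloneqq u^h(t) - \Pi_h u(t)$ by deriving the differential equation it satisfies and then applying the dissipativity of $\Delta_h$ together with a Gronwall-type argument. First I would compute $\partial_t e^h(t)$. Since $u$ solves \eqref{eq:heat-hyperbolic-source}, we have $\partial_t \Pi_h u(t) = \Pi_h \partial_t u(t) = \Pi_h \Delta_g u(t) + \Pi_h f(t)$, while $u^h$ solves \eqref{eq:heat-discrete-source}, so $\partial_t u^h(t) = \Delta_h u^h(t) + \Pi_h f(t)$. Subtracting, the source terms cancel and the equation for the error becomes
\begin{equation*}
\partial_t e^h(t) = \Delta_h e^h(t) + \Delta_h \Pi_h u(t) - \Pi_h \Delta_g u(t) \eqqcolon \Delta_h e^h(t) + r^h(t),
\end{equation*}
with initial condition $e^h(0) = \Pi_h u_0 - \Pi_h u_0 = 0$. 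The crucial point is that the \emph{consistency residual} $r^h(t)$ is precisely the quantity controlled by property (L\ref{L:consistency}): by \eqref{eq:consistency-discrete-lapl} applied to the solution $u(t) = e^{t\Delta_g}u_0 + \int_0^t e^{(t-\sigma)\Delta_g} f(\sigma)\,\dd\sigma$, one has $\|r^h(t)\|_{\ell_h^2} \leq C_{u_0,f,T}\, h^2$ uniformly for $t\in[0,T]$ and $h\in(0,h_0)$.

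Next, since by (L\ref{L:dissipativity}) the operator $\Delta_h$ generates a contraction semigroup on $\ell_h^2$, Duhamel's formula (as in \cite[Proposition 4.1.6]{cazenave}) gives the closed form
\begin{equation*}
e^h(t) = \int_0^t e^{(t-\sigma)\Delta_h}\, r^h(\sigma)\, \dd\sigma.
\end{equation*}
Taking the $\ell_h^2$ norm, using the contraction property $\|e^{(t-\sigma)\Delta_h}\|_{\ell_h^2\to\ell_h^2}\leq 1$, and inserting the consistency bound yields
\begin{equation*}
\|e^h(t)\|_{\ell_h^2} \leq \int_0^t \|r^h(\sigma)\|_{\ell_h^2}\, \dd\sigma \leq \int_0^t C_{u_0,f,T}\, h^2 \, \dd\sigma \leq T\, C_{u_0,f,T}\, h^2,
\end{equation*}
which is the claimed estimate after absorbing $T$ into the constant. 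An equivalent route, avoiding explicit use of the semigroup formula, is to compute $\frac{1}{2}\frac{\dd}{\dd t}\|e^h(t)\|_{\ell_h^2}^2 = \langle \Delta_h e^h(t), e^h(t)\rangle_{\ell_h^2} + \langle r^h(t), e^h(t)\rangle_{\ell_h^2} \leq \|r^h(t)\|_{\ell_h^2}\|e^h(t)\|_{\ell_h^2}$, using dissipativity to discard the first term, and then integrate the resulting differential inequality $\frac{\dd}{\dd t}\|e^h(t)\|_{\ell_h^2} \leq \|r^h(t)\|_{\ell_h^2}$.

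I do not expect a serious obstacle here: all the analytic work has been front-loaded into the three hypotheses (L\ref{L:contractivity})--(L\ref{L:consistency}), and in particular the delicate part — showing that the consistency residual is genuinely $\mathcal{O}(h^2)$ in the $\ell_h^2$ norm, which for the concrete operators relied on Theorems \ref{thm:consistency-first-lapl} and \ref{thm:accuracy-2-lapl} together with the tail estimates of Propositions \ref{prop:tail-control} and \ref{prop:semigrup-marginire} — is already assumed. The only mild subtlety is bookkeeping the time dependence: one must check that property (L\ref{L:consistency}) as stated applies to $u(t)$ for each fixed $t$ with a constant independent of $t\in[0,T]$, which is ensured because $\|u(t)\|_{\mathcal{M}}$ is bounded uniformly on $[0,T]$ by Proposition \ref{prop:semigrup-marginire}. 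Beyond that, the argument is a routine stability-plus-consistency implies convergence estimate in the spirit of the Lax equivalence principle, and the exponent $h^2$ is inherited directly from the consistency order.
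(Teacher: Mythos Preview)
Your proof is correct and essentially coincides with the paper's: the paper derives the same error equation $\partial_t E_h = \Delta_h E_h + (\Delta_h\Pi_h u - \Pi_h\Delta_g u)$, uses dissipativity via the energy estimate $\frac{\dd{}}{\dd t}\|E_h\|^2 \leq 2\|E_h\|\,\|r^h\|$ (your ``equivalent route''), integrates to $\|E_h(t)\|\leq \int_0^t\|r^h(s)\|\,\dd s$, and concludes by (L\ref{L:consistency}). Your primary route via Duhamel and the contraction semigroup is just the mild-solution phrasing of the same computation, and the paper's proof is exactly the alternative you sketch at the end.
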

\begin{proof}
We take the time derivative of the $\ell_h^2$ norm of the error
\[E_h(t)\coloneqq u^h(t)-\Pi_h u(t)\]
and, using the form of the equations satisfied by $u$ and $u^h$, we obtain:
\[\begin{aligned}
\frac{\dd{}}{\dd{t}} \|E_h(t)\|^2_{\ell_h^2} & = 2 \left\langle u^h(t)-\Pi_h u(t), \Delta_h u^h(t) + \Pi_h f(t) - \Pi_h (\Delta_g u(t) + f(t))  \right\rangle_{\ell_h^2} \\
& = 2 \left\langle u^h(t)-\Pi_h u(t), \Delta_h u^h(t) - \Pi_h \Delta_g u(t)  \right\rangle_{\ell_h^2}\\
& =  2 \left\langle u^h(t)-\Pi_h u(t), \Delta_h u^h(t) - \Delta_h \Pi_h u(t)  \right\rangle_{\ell_h^2} + 2 \left\langle u^h(t)-\Pi_h u(t), \Delta_h \Pi_h u(t) - \Pi_h \Delta_g u(t)  \right\rangle_{\ell_h^2}\\
&= 2 \left\langle E_h(t), \Delta_h E_h(t) \right\rangle_{\ell_h^2} + 2 \left\langle E_h(t),  \Delta_h \Pi_h u(t) - \Pi_h \Delta_g u(t)\right\rangle_{\ell_h^2}.
\end{aligned}
\]
The dissipativity of the operator $\Delta_h$ (hypothesis (L\ref{L:dissipativity})) implies that the first term above is non-positive. Moreover, by the Cauchy-Schwarz inequality, we obtain:
\[\frac{\dd{}}{\dd{t}} \|E_h(t)\|^2_{\ell_h^2} \leq 2 \|E_h(t)\|_{\ell_h^2} \|\Delta_h \Pi_h u(t) - \Pi_h \Delta_g u(t)\|_{\ell_h^2},\]
which immediately leads to: 
\[\|E_h(t)\|_{\ell_h^2} \leq \|E_h(0)\|_{\ell_h^2} + \int_0^t \|\Delta_h \Pi_h u(s) - \Pi_h \Delta_g u(s)\|_{\ell_h^2} \dd{s}.\]
By the choice of initial data $u^h(0)$ of the numerical scheme, $E_h(0)=0$. This fact, combined with the consistency estimate (L\ref{L:consistency}) implies:
\[\|E_h(t)\|_{\ell_h^2} \leq \int_0^t \|\Delta_h \Pi_h u(s) - \Pi_h \Delta_g u(s)\|_{\ell_h^2} \dd{s}\leq C_{u_0,f,T} h^2,\]
which finishes the proof.
\end{proof}

\section{Fully discrete $\theta$-scheme for discrete Laplacians on $\HH$}\label{sec:crac-stix}
This section is dedicated to the study of the actual algorithm (Algorithm \ref{alg:theta-heat-hyperbolic}) that can be implemented on a computer in order to approximate the solution of the heat equation on the whole hyperbolic space $\HH$. Throughout this section, the tuple $((\mathcal{C}_h^{i,j})_{(i,j)\in Z_1\times Z_2},\ell_h^2,\Pi_h,\Delta_h)$ is either of the two finite grids, i.e. $C_h^{i,j}\subset \HHD$, functions spaces, i.e. $\ell_h^2 \coloneqq \ell^2_{h, D}$, projections, i.e. $\Pi_h \coloneqq \Pi_{h, D}$, and discrete Laplace operators, i.e. $\Delta_h \coloneqq \Delta_{h, D}$, in Sections \ref{sec:first-lapl} or \ref{sec:second-lapl}, respectively, for $D\coloneqq D_{h,\gamma,\zeta}$ as in \eqref{Intro:eq:cine-PLM-i-D}. 

The novel aspect is that the time is also discretised at a uniform time step equal to $\tau>0$ that will be chosen according to the space grid parameter $h$. Eventually, we will prove that the convergence rate of the resulting $\theta$-scheme for $\theta\in (\frac 1 2,1]$ is $\mathcal{O}(h^2+\tau)$, whereas, in the particular case $\theta=\frac{1}{2}$ corresponding to the Crank-Nicolson scheme, the convergence rate is $\mathcal{O}(h^2+\tau^2)$. In consequence, we select the time step $\tau$ according to $\theta$, on the lines \ref{alg:ifTheta}-\ref{alg:elseTheta} of Algorithm \ref{alg:theta-heat-hyperbolic}, in order to ensure a convergence rate of $\mathcal{O}(h^2)$ for our $\theta$-scheme, regardless of $\theta \in [\frac 1 2, 1]$.\\

From the numerically point of view, it is easier to transfer information from the continuous setting to the grid via pointwise evaluations, instead of using the integral projection operators $\Pi_h$ in \eqref{eq:projection-first-lapl} and \eqref{eq:projection-2}. The following lemma identifies the points $\xi_{i,j}$ in every cell $\CC_h^{i,j}$ which allow us to transfer the initial datum and source towards the grid (as on line \ref{alg:initial-datum} of Algorithm \ref{alg:theta-heat-hyperbolic}), while preserving the convergence rate $\mathcal{O}(h^2)$.
\begin{lem}\label{lem:approx-projection-with-pointwise}
Let $v\in \mathcal{M}$ and the tuple $((\mathcal{C}_h^{i,j})_{(i,j)\in Z_1\times Z_2},\ell_h^2,\Pi_h)$ be any of the two grids, functions spaces and projections in Sections \ref{sec:first-lapl} or \ref{sec:second-lapl}. Then, there exists a universal constant $C>0$ such that, uniformly in $h\in (0,1)$, the following estimate holds:
\begin{equation}\label{eq:approx-projection-with-pointwise}
    \|\Pi_h v - v(\xi_{i,j})\|_{\ell_h^2}\leq C h^2 \|v\|_{\mathcal M},
\end{equation}
where the point $\xi_{i,j}\in C_h^{i,j}$ satisfies:
\begin{equation}\label{eq:xi-is-mass-center}
\int_{C_h^{i,j}} (\x-\xi_{i,j}) \dd{\mu(\x)} = 0.
\end{equation}
More precisely, for the first discrete Laplace operator (Section \ref{sec:first-lapl}), 
\[\xi_{i,j}^{(1)}=\left(ih, h\left(j^2-\frac{1}{4}\right)\log\left(\frac{j+\frac{1}{2}}{j-\frac{1}{2}}\right)\right)\]
and for the second discrete Laplacian (Section \ref{sec:second-lapl}), 
\[\xi_{i,j}^{(2)} = \left(i \rho(h), \frac{h}{\rho(h)} e^{jh}\right).\]
\end{lem}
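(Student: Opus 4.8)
The plan is to prove the two claims in the lemma (existence of a mass-center point $\xi_{i,j}$ satisfying \eqref{eq:xi-is-mass-center}, together with the closed-form expressions, and the $\mathcal{O}(h^2)$ bound \eqref{eq:approx-projection-with-pointwise}) in that order, since the error estimate is essentially a Taylor expansion around the mass center. The key observation is that by the definition of the projection, $(\Pi_h v)_{i,j} = \frac{1}{|\CC_h^{i,j}|_g}\int_{\CC_h^{i,j}} v \,\dd\mu$, so for any choice of base point $\xi_{i,j}$ in the cell we can write
\[
(\Pi_h v)_{i,j} - v(\xi_{i,j}) = \frac{1}{|\CC_h^{i,j}|_g}\int_{\CC_h^{i,j}}\bigl(v(\x)-v(\xi_{i,j})\bigr)\,\dd\mu(\x).
\]
Performing a first-order Taylor expansion of $v$ around $\xi_{i,j}$ with integral remainder, the zeroth-order term is $v(\xi_{i,j})$ which cancels, the first-order term is $\nabla_e v(\xi_{i,j})\cdot(\x-\xi_{i,j})$, and its integral against $\dd\mu$ over the cell vanishes \emph{precisely} because of the defining property \eqref{eq:xi-is-mass-center}. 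What remains is the second-order remainder, which is controlled by $\sup_{\CC_h^{i,j}}|\nabla_e^2 v|$ times $(\operatorname{diam}_e \CC_h^{i,j})^2$ times the measure of the cell; since each cell has Euclidean diameter $\mathcal{O}(h)$ (using $\rho(h)\sim h$ and, in the vertical direction of the first grid, that the cell $[jh-h/2,jh+h/2]$ has width $h$, while for the second grid $[e^{jh-h/2},e^{jh+h/2}]$ has Euclidean width $\sim h\,e^{jh}$, i.e. $\mathcal{O}(h)$ relative to the local scale), this gives the $h^2$ gain. Summing the squares over all cells and using the exponential-weight control built into $\|\cdot\|_{\mathcal M}$ to absorb the $x_2$-dependent factors (exactly as in the consistency proofs of Theorems~\ref{thm:consistency-first-lapl} and~\ref{thm:accuracy-2-lapl}) yields \eqref{eq:approx-projection-with-pointwise}.

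\textbf{Finding $\xi_{i,j}$ explicitly.} Since the cells are Cartesian products and the hyperbolic measure factorises as $\dd\mu = x_2^{-2}\,dx_1\,dx_2$, condition \eqref{eq:xi-is-mass-center} splits into two one-dimensional barycenter conditions. In the $x_1$-direction the weight $x_2^{-2}$ is constant, so the first coordinate of $\xi_{i,j}$ is simply the midpoint of the $x_1$-interval: $ih$ for the first grid and $i\rho(h)$ for the second. In the $x_2$-direction one must solve $\int_a^b (x_2 - (\xi_{i,j})_2)\,x_2^{-2}\,dx_2 = 0$, i.e. $(\xi_{i,j})_2 = \bigl(\int_a^b x_2^{-1}\,dx_2\bigr)\big/\bigl(\int_a^b x_2^{-2}\,dx_2\bigr) = \log(b/a)\big/(a^{-1}-b^{-1})$. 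For the first grid $[a,b]=[jh-h/2, jh+h/2]$, giving $a^{-1}-b^{-1} = \frac{h}{(jh)^2-(h/2)^2} = \frac{1}{h(j^2-1/4)}$ and $\log(b/a)=\log\frac{j+1/2}{j-1/2}$, hence $(\xi_{i,j})_2 = h(j^2-\tfrac14)\log\frac{j+1/2}{j-1/2}$, matching $\xi^{(1)}_{i,j}$. For the second grid $[a,b]=[e^{jh-h/2}, e^{jh+h/2}]$, giving $\log(b/a)=h$ and $a^{-1}-b^{-1} = e^{-jh}(e^{h/2}-e^{-h/2}) = e^{-jh}\rho(h)$, hence $(\xi_{i,j})_2 = \frac{h}{\rho(h)}e^{jh}$, matching $\xi^{(2)}_{i,j}$. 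In both cases one checks $a < (\xi_{i,j})_2 < b$, so $\xi_{i,j}\in\CC_h^{i,j}$.

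\textbf{Main obstacle.} The computation of $\xi_{i,j}$ and the Taylor-cancellation mechanism are routine. The only genuinely delicate point is the uniform-in-$(i,j)$ bookkeeping in the second-order remainder: one must verify that the geometric factors arising from $|\CC_h^{i,j}|_g$, from the cell diameters, and from converting Euclidean derivatives $\nabla_e^2 v$ into the covariant/metric quantities appearing in $\|v\|_{\mathcal M}$ (which involves powers of $x_2$ of the type $x_2^2$ for $\Delta_g$, as in \eqref{def:laplacianH2}) combine so that, after summing $\sum_{i,j}|\CC_h^{i,j}|_g\,|\text{remainder}_{i,j}|^2$, everything is bounded by $h^4\|v\|_{\mathcal M}^2$ up to a universal constant, uniformly for $h\in(0,1)$. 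This is the same type of estimate already carried out for the consistency theorems, so one can either invoke those arguments almost verbatim or, more cleanly, reduce \eqref{eq:approx-projection-with-pointwise} to them by noting that $v\mapsto v(\xi_{i,j})$ is itself a ``consistent'' sampling; I expect no new difficulty beyond careful tracking of the $x_2$-weights, which the $\sup_{w>0} e^{w\,d_g(\mathrm O,\x)}$ factor in $\|\cdot\|_{\mathcal M}$ is designed precisely to handle (since $d_g(\mathrm O,\x)$ grows like $|\log x_2|$ towards both ends of the $x_2$-axis).
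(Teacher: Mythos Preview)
The paper states this lemma without proof, so there is no ``paper's own proof'' to compare against. Evaluating your proposal on its own merits:

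Your computation of the mass centers $\xi_{i,j}^{(1)}$ and $\xi_{i,j}^{(2)}$ is complete and correct; the factorisation of the barycenter condition into two one-dimensional integrals and the evaluation $(\xi_{i,j})_2 = \log(b/a)\big/(a^{-1}-b^{-1})$ is exactly the right mechanism, and the verification that $\xi_{i,j}\in\CC_h^{i,j}$ follows from the elementary bounds $\frac{x}{1+x}<\log(1+x)<x$ and $1-e^{-h}<h$ respectively.

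Your Taylor-cancellation strategy for \eqref{eq:approx-projection-with-pointwise} is also the natural and correct approach: the choice \eqref{eq:xi-is-mass-center} is engineered precisely so that the first-order term $\nabla_e v(\xi_{i,j})\cdot\int_{\CC_h^{i,j}}(\x-\xi_{i,j})\,\dd\mu$ vanishes, leaving a genuinely second-order remainder. Your identification of the ``main obstacle'' is accurate: for the second grid the Euclidean $x_2$-width of $\CC_h^{i,j}$ is $e^{jh}\rho(h)$, not $\mathcal O(h)$ uniformly, so the remainder picks up a factor $e^{2jh}\sim x_2^2$ per second $x_2$-derivative. After squaring and summing with weights $\rho(h)^2 e^{-jh}$, one needs roughly $\sum_{i,j}\rho(h)^2 e^{3jh}|\partial_{x_2}^2 v|^2$ rather than the $L^2(\HH)$ quantity $\sum_{i,j}\rho(h)^2 e^{-jh}|\partial_{x_2}^2 v|^2$; the missing $e^{4jh}\sim x_2^4$ is indeed absorbed by the weight $e^{w\,d_g(\mathrm O,\x)}$ in $\|\cdot\|_{\mathcal M}$ since $d_g(\mathrm O,\x)\ge|\log x_2|$. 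This is exactly parallel to the estimates behind Theorems~\ref{thm:consistency-first-lapl} and~\ref{thm:accuracy-2-lapl}, as you say, so your deferral to those arguments is legitimate. To turn the sketch into a full proof you should write the remainder in integral form (rather than with a $\sup$) and apply Cauchy--Schwarz over each cell before summing, but no new idea is required.
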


\begin{algorithm}\label{algorithm}
\DontPrintSemicolon
\KwIn{$u_0$, $f$, $T$, $h$, $\theta$, $\gamma$, $\zeta$, $\xi_{i,j}$ as in Lemma \ref{lem:approx-projection-with-pointwise}}
\KwOut{$U\sim u(t)$}
Initialisation: $D\leftarrow \zeta h^{-\gamma}$\;
\hspace{2.3cm} $Z_1\times Z_2 \leftarrow \left\{(i,j)\in \Z^2 : \xi_{i,j}\in [-D,D]\times [\frac 1 D,D]\right\}$\;
\hspace{2.3cm} $U_{i,j}\leftarrow u_0(\xi_{i,j})\,, \quad \forall~(i,j)\in Z_1\times Z_2$\;
\label{alg:initial-datum}
\hspace{2.3cm} $k\leftarrow 0$\;
\eIf{$\theta = \frac{1}{2}$ \label{alg:ifTheta}}{
$\tau\leftarrow h$\;
}{$\tau\leftarrow h^2$
\label{alg:elseTheta}
}

\Repeat{$k\tau > T$}{%
Update U:\;
$U\leftarrow ({\rm id}-\tau \theta \Delta_h)^{-1}\left[ \left({\rm id}+\tau (1-\theta)\right)U+ \tau f((k+\theta)\tau,\xi_{i,j}) \right] $ \;
\label{alg:update-u}
$k\leftarrow k+1$\;
}
\caption{$\theta$-scheme for the heat equation on $\HH$}
\label{alg:theta-heat-hyperbolic}
\end{algorithm}

\smallskip

The next theorem provides the convergence rate of Algorithm \ref{alg:theta-heat-hyperbolic}:
\begin{thm}[Convergence rate of the $\theta$-scheme]\label{thm:convergence-theta}
Let $T>0$, $u_0$ and $f$ satisfy Hypothesis~\ref{hyp:biserica-Grivitei} and assume furthermore that $\partial_t f, \partial_t^2 f, \partial_t \Delta_g f\in C([0,T],{\rm L}^2(\HH))$. Let also $\gamma>0$, $\zeta>2$, $\theta\in [\frac 1 2,1]$, $h\in (0,\frac 1 2)$, and $D$ as defined in~\eqref{Intro:eq:cine-PLM-i-D}. Then, there exists a constant $C_{T,u_0,f,\gamma,\zeta}>0$ such that, regardless of the choice of the tuple $((\CC_h^{i,j})_{(i,j)\in Z_1\times Z_2},\ell_h^2,\Pi_h,\Delta_h)$ as in the beginning of this section, the following estimate holds:
\[\left\|U-\Pi_h(u(T))_{i,j}\right\|_{\ell_h^2}\leq C_{T,u_0,f,\gamma,\zeta} (h^2+\tau),\]
where $U$ is the output vector of Algorithm \ref{alg:theta-heat-hyperbolic}.

Moreover, if $\theta=\frac{1}{2}$ (i.e., when we use the Crank-Nicolson scheme) and $T$ is an integral multiple of $\tau$, then the order of the convergence above is $\mathcal{O}(h^2+\tau^2)$.
\end{thm}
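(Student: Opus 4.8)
The plan is to decompose the total error $U - \Pi_h u(T)$ into two contributions and estimate each separately: the \emph{space-discretisation error} $\Pi_h u(T) - \widetilde{U}(T)$, where $\widetilde U$ solves the exact semi-discrete problem \eqref{eq:heat-discrete-source} with the \emph{pointwise} data $u_0(\xi_{i,j})$, $f(t,\xi_{i,j})$ instead of the projected data; and the \emph{time-discretisation error} $\widetilde U(T) - U$, comparing the exact-in-time semigroup solution of the $\ell_h^2$-valued ODE with the $\theta$-scheme iterates. For the first piece, one first replaces pointwise data by the projections at the cost of $Ch^2\|u_0\|_{\mathcal M}$ and $Ch^2\|f\|_{C([0,T],\mathcal M)}$ using Lemma~\ref{lem:approx-projection-with-pointwise} (here the defining property \eqref{eq:xi-is-mass-center} of $\xi_{i,j}$ is what buys the second order), and then invokes the already-established Theorem in Section~\ref{sec:semi-discret} on convergence of the semi-discrete FDM scheme, which gives $\|\widetilde U(T) - \Pi_h u(T)\|_{\ell_h^2}\le C_{u_0,f,T}h^2$ since both restricted Laplacians satisfy (L\ref{L:contractivity})--(L\ref{L:consistency}). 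One must check that the pointwise-data semi-discrete solution stays within $O(h^2)$ of the projected-data one; this follows from contractivity of $e^{t\Delta_h}$ (dissipativity, (L\ref{L:dissipativity})) and Duhamel, exactly as in Remark~\ref{rem:well-posedness-L2-estimate-discrete-heat}, applied to the difference of the two source/initial-data pairs.

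For the time-discretisation error I would run the standard $\theta$-scheme consistency/stability argument in the Hilbert space $\ell_h^2$, treating $A:=-\Delta_h$ as a nonnegative self-adjoint operator (its norm is $O(h^{-2})$, but the argument will not use the norm). Write the scheme as $U^{k+1} = R_\theta(\tau A)U^k + \tau(\mathrm{id}+\tau\theta A)^{-1}f((k+\theta)\tau,\xi)$ with $R_\theta(z)=(1-(1-\theta)z)/(1+\theta z)$; for $\theta\in[\tfrac12,1]$ one has $|R_\theta(z)|\le 1$ for all $z\ge 0$ (A-stability), which gives unconditional $\ell_h^2$-stability of the scheme uniformly in $h$. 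Then one estimates the local truncation error of the $\theta$-step applied to the exact semi-discrete solution $\widetilde U$: a Taylor expansion in $\tau$ around $t_k+\theta\tau$ shows the defect is $O(\tau^2)$ for general $\theta$ (so $O(\tau)$ globally after summing $T/\tau$ steps) and $O(\tau^3)$ for $\theta=\tfrac12$ by the symmetry of Crank--Nicolson about the midpoint (hence $O(\tau^2)$ globally). Here the extra hypotheses $\partial_t f,\partial_t^2 f,\partial_t\Delta_g f\in C([0,T],\mathrm L^2)$, together with Proposition~\ref{prop:semigrup-marginire}, are exactly what is needed to bound the relevant time-derivatives of $\widetilde U$ — e.g. $\partial_t^3\widetilde U$ for the Crank--Nicolson case — uniformly in $h$ after projecting; one differentiates the Duhamel formula \eqref{eq:soultion-heat-semidiscrete-source} and uses that $\Delta_h\Pi_h$ applied to smooth enough functions is controlled via the consistency estimate plus $\|\Delta_g(\cdot)\|$ bounds from $\|\cdot\|_{\mathcal M}$. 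Summing the local errors against the stable propagator $R_\theta(\tau A)$ yields $\|\widetilde U(T)-U\|_{\ell_h^2}\le C\tau$ (resp.\ $C\tau^2$).

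The main obstacle, and the place requiring genuine care, is the uniformity in $h$ of all the time-derivative bounds on the semi-discrete solution: a priori $\|\Delta_h\|\to\infty$, so naive estimates of $\partial_t^k\widetilde U = \Delta_h^k\widetilde U + \dots$ blow up. The resolution is to never differentiate in time by pulling down powers of $\Delta_h$, but instead to express $\partial_t^k\widetilde U$ through $\Pi_h$ applied to $\partial_t^k$ of the \emph{continuous} solution (controlled by $\|u_0\|_{\mathcal M}$, $\|f\|_{C([0,T],\mathcal M)}$ and the new $\partial_t f$-type hypotheses via Proposition~\ref{prop:semigrup-marginire} and the equation), plus commutator terms $\Delta_h\Pi_h - \Pi_h\Delta_g$ that are $O(h^2)$ by consistency (L\ref{L:consistency}). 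Concretely, $\partial_t\widetilde U = \Delta_h\widetilde U + \Pi_h f = \Pi_h(\Delta_g u + f) + (\Delta_h\Pi_h u - \Pi_h\Delta_g u) + \Delta_h(\widetilde U-\Pi_h u) = \Pi_h\partial_t u + O(h^2)$, and one bootstraps; the higher-order analogue is what forces the $C^6$/extra-$f$-regularity assumptions. Once this uniform-in-$h$ control is in place, the A-stability of $R_\theta$ and the two-term triangle inequality $\|U-\Pi_h u(T)\| \le \|U-\widetilde U(T)\| + \|\widetilde U(T)-\Pi_h u(T)\|$ close the argument, with the additional requirement "$T$ an integer multiple of $\tau$" in the Crank--Nicolson case ensuring the midpoint-quadrature cancellation is exact at the final time.
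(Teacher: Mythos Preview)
The paper states Theorem~\ref{thm:convergence-theta} without proof, so there is nothing to compare against directly; I evaluate your proposal on its own merits and against what the hypotheses of the theorem are evidently designed to support.

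Your overall architecture (A-stability of $R_\theta(\tau A)$ for $\theta\in[\tfrac12,1]$, local truncation analysis, summation against the contractive propagator, and the use of Lemma~\ref{lem:approx-projection-with-pointwise} to pass from pointwise to projected data) is correct, and the general-$\theta$ case closes as you describe: bounding $\partial_t^2\widetilde U$ uniformly in $h$ only requires $\Delta_h^2\Pi_h u_0$ to be $O(1)$, and indeed $\Delta_h^2\Pi_h u_0=\Pi_h\Delta_g^2u_0+r_h(\Delta_g u_0)+\Delta_h r_h(u_0)$ with the last term $O(h^{-2})\cdot O(h^2)=O(1)$.

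The gap is in the Crank--Nicolson case. Your bootstrap claim ``$\partial_t^k\widetilde U=\Pi_h\partial_t^k u+O(h^2)$'' already fails at $k=2$, and the weaker statement ``$\partial_t^k\widetilde U$ is bounded uniformly in $h$'' fails at $k=3$. Writing $E_k:=\partial_t^k\widetilde U-\Pi_h\partial_t^k u$, one has $E_{k+1}=\Delta_h E_k+r_h(\partial_t^k u)$ with $E_1(0)=r_h(u_0)$, so $E_2(0)=\Delta_h r_h(u_0)+r_h(\partial_t u(0))$ and $\Delta_h E_2(0)$ contains $\Delta_h^2 r_h(u_0)$. The consistency estimate (L\ref{L:consistency}) gives $\|r_h(u_0)\|_{\ell_h^2}\le Ch^2$ but says nothing about $\Delta_h r_h(u_0)$ beyond the crude $O(1)$, and a second application of $\Delta_h$ yields $O(h^{-2})$. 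Hence $\partial_t^3\widetilde U(0)=\Delta_h^3\Pi_h u_0+\dots$ is of order $h^{-2}$, and the first Crank--Nicolson step carries a local error $\sim\tau^3\|\partial_t^3\widetilde U\|\sim\tau^3 h^{-2}$, which for $\tau=h$ is $O(h)$, not $O(h^3)$. The integration-by-parts trick you implicitly use to show $\Delta_h(\widetilde U-\Pi_h u)=O(h^2)$ works at level $k=1$ because $E(0)=0$, but $E_1(0)=r_h(u_0)\neq0$, so the trick does not iterate.

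The clean fix, and the one the theorem's hypotheses are tailored to, is to drop $\widetilde U$ altogether and compare $U^k$ directly to $\Pi_h u(k\tau)$. Writing the defect of the $\theta$-step at $\Pi_h u$, one uses consistency \emph{once} to replace $\Delta_h\Pi_h u(\cdot)$ by $\Pi_h\Delta_g u(\cdot)+O(h^2)$ (a contribution $O(\tau h^2)$ per step, hence $O(h^2)$ globally), and the remaining expression is $\Pi_h$ applied to a purely continuous Taylor remainder in $\tau$. For $\theta=\tfrac12$ this remainder is $O(\tau^3\|\partial_t^3 u\|_{\mathrm L^2(\HH)})$, and $\partial_t^3 u=\Delta_g^3 u+\Delta_g^2 f+\Delta_g\partial_t f+\partial_t^2 f$ is in $\mathrm L^2(\HH)$ precisely under the $C^6$ assumption on $u_0$ (via Proposition~\ref{prop:semigrup-marginire}) together with the extra hypotheses $\partial_t f,\partial_t^2 f,\partial_t\Delta_g f\in C([0,T],\mathrm L^2(\HH))$. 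No powers of $\Delta_h$ beyond the first ever appear, so nothing blows up as $h\to0$.
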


\section{Numerical resuls: A comparative study of the two discrete Laplacians}\label{sec:numerics}
This section is dedicated to the experimental study of the two variants of numerical scheme developed in this article, as an illustration for their practical use and also to demonstrate the sharpness of the convergence rate $\mathcal{O}(h^2)$. Additionally, we compare the numerical results obtained using these two schemes, emphasising the advantages of employing a grid specifically tailored to the geometry of the hyperbolic space. Finally, we deduce and test an analogous of the best performing  discrete Laplacian on the three dimensional hyperbolic space $\mathbb{H}^3$.

\subsection{Example setup}
We consider the heat equation~\eqref{eq:heat-hyperbolic-source} on the entire space $\HH$ with the following specified heat source:
\begin{equation}\label{eq:num-heat-source}
    f(t, \x) \coloneqq - \mathrm{e}^{- t - x_1^2 - x_2^2 - x_2^{-2}} \left[2 x_2^2 \left( 2 x_1^2 - 1\right) + 2 \dfrac{2 x_2^8 - x_2^6 - 4 x_2^4 - 3 x_2^2 + 2}{x_2^4} + 1\right]\,,
\quad t \in (0, +\infty),~\x \in \HH\,,
\end{equation}
and the following initial temperature distribution:
\begin{equation}\label{eq:num-heat-initial-condition}
    u_0(\x) \coloneqq \mathrm{e}^{- x_1^2 - x_2^2 - x_2^{-2}}\,,
\quad \x \in \HH\,.
\end{equation}
It can be easily proved that the analytical solution of the heat equation~\eqref{eq:heat-hyperbolic-source} corresponding to the previously defined heat source~\eqref{eq:num-heat-source} and initial condition~\eqref{eq:num-heat-initial-condition} is given by
\begin{equation}\label{eq:num-heat-solution}
    u^{(\mathrm{ex})}(t, \x)
= \mathrm{e}^{- t - x_1^2 - x_2^2 - x_2^{-2}}\,,
\quad t \in (0, +\infty),~\x \in \HH\,.
\end{equation}
This exact solution serves as the benchmark we aim to approximate using our proposed numerical schemes.

This study will proceed as follows:  for various values of the spatial grid parameter, specifically $h \in \{\frac{1}{16}, \frac{1}{32}, \frac{1}{64}\}$, we will numerically solve the investigated problem with the prescribed heat source~\eqref{eq:num-heat-source} and initial condition~\eqref{eq:num-heat-initial-condition}. Subsequently, we will assess and compare the $\ell^{2}_{h, D}$ norms of the errors in the numerical approximation of the exact solution~\eqref{eq:num-heat-solution} obtained using both numerical schemes.

Regarding the choice of the bounded approximation domain $\HHD$ in \eqref{Intro:eq:cine-PLM-i-HD}, for our specific example we choose $\zeta = 6$ and $\gamma = \frac{1}{6}$ resulting in the following values of $D \in \{9.52, 10.69, 12\}$ corresponding to each value of the spatial discretisation parameter $h \in \{\frac{1}{16}, \frac{1}{32}, \frac{1}{64}\}$. See Figure \ref{fig:NRE-Comparison}.

\subsection{Convergence Order of the FDM}

Let $U^{(\ell)}_h$ be the output vector obtained using the $\theta$-scheme given by Algorithm~\ref{alg:theta-heat-hyperbolic}, where $\ell = 1, 2$ denotes the discrete Laplacian employed in the numerical approximation and $h > 0$ the step size of the spatial grid. To study the convergence of the aforementioned schemes towards the exact solution, we denote, for a fixed time $T=1$, the following convergence errors with respect to the $\ell^{2}_{h, D}$ norm
\[
    E^{(\ell)}_h \coloneqq \left\|U^{(\ell)}_h-u(T, \xi_{i,j})\right\|_{\ell^{2}_{h, D}}, \quad \ell = 1, 2.
\]

In Tables~\ref{table01} and \ref{table02} we present the values of the convergence errors $E^{(\ell)}_h$ ($\ell = 1, 2$), the number of FDM nodes needed to discretise $\HHD$, i.e. $N \times M$, the memory usage and the CPU time\footnote{The computation was performed on an Intel(R) Core(TM) i7-6700HQ CPU @ 2.60GHz processor.} required by Algorithm~\ref{alg:theta-heat-hyperbolic} with $\theta = \frac 1 2 $ and $\theta = 1$, respectively, for various spatial step sizes, $h \in \{\frac{1}{16}, \frac{1}{32}, \frac{1}{64}\}$. The following conclusions can be drawn from these tables:

\begin{enumerate}[(i)]
\setlength{\itemsep}{1pt}
\item For each discrete Laplacian and each choice of $\theta$, the convergence errors $E^{(\ell)}_h$ ($\ell = 1, 2$) decrease by a factor of $4$ as the mesh size $h$ is halved and the number of FDM nodes $N \times M$ increases. This indicates that the $\theta$-scheme in Algorithm~\ref{alg:theta-heat-hyperbolic} is convergent with a sharp order of $\mathcal{O}(h^2)$ with respect to the refinement of the finite difference grid.

\item For each discrete Laplacian and each $h$, we observe that the $\theta$-scheme corresponding to $\theta = \frac{1}{2}$ is much faster than the one corresponding to $\theta = 1$, as expected, since in the former case we were able to chose the time step $\tau = h$, in contrast to $\tau = h^2$ as in the latter case.

\item For each $h$ and each choice of $\theta$, the convergence errors $E^{(\ell)}_h$ ($\ell = 1, 2$) associated with the second discrete Laplacian are significantly smaller than their counterparts obtained using the first discrete Laplacian, illustrating the advantage of using a grid that is specifically tailored to the geometry of $\HH$. 

\item For each $h$ and each choice of $\theta$, the second discrete Laplacian requires significantly fewer FDM nodes compared to the first discrete Laplacian, resulting in substantial improvements in memory usage and CPU time.
\end{enumerate}

\begin{table}[ht]
\normalsize
\begin{center}
\begin{tabular}{|c|c|c|c|c|c|}
\hline $\Delta_h$ & $h$ & $E^{(\ell)}_h$ & $N \times M$ & Memory usage (MB) & CPU Time (s)\\
\hline \multirow{3}{2em}{$\Delta_h^{(1)}$} 
& $1 / 16$ & $2.3153\mathrm{e}{-04}$ & $\phantom{10}46360$ & \phantom{1}108 & $\phantom{1}0.29$\\  \cline{2-6}
& $1 / 32$ & $5.4069\mathrm{e}{-05}$ & $\phantom{1}233585$ & \phantom{1}430 & $\phantom{1}3.40$\\  \cline{2-6}
& $1 / 64$ & $1.3314\mathrm{e}{-05}$ & $1174268$ & 2206 & $43.75$\\
\hline \multirow{3}{2em}{$\Delta_h^{(2)}$} 
& $1 / 16$ & $1.2119\mathrm{e}{-04}$ & $\phantom{10}22265$ & \phantom{10}24 & $\phantom{1}0.14$\\  \cline{2-6}
& $1 / 32$ & $3.0210\mathrm{e}{-05}$ & $\phantom{1}103435$ & \phantom{1}144 & $\phantom{1}1.31$\\  \cline{2-6}
& $1 / 64$ & $7.5480\mathrm{e}{-06}$ & $\phantom{1}489665$ & \phantom{1}792 & $16.86$\\
\hline
\end{tabular}
\caption{Values of the convergence errors, $E^{(\ell)}_h$ ($\ell = 1, 2$), the corresponding number of FDM nodes used in the discretisation of $\HHD$, i.e. $N \times M$, the memory usage and the CPU time, corresponding to Algorithm~\ref{alg:theta-heat-hyperbolic} with $\theta = \frac 1 2$ and $h \in \{\frac{1}{16}, \frac{1}{32}, \frac{1}{64}\}$.}\label{table01}
\end{center}
\end{table}

\begin{table}[ht]
\normalsize
\begin{center}
\begin{tabular}{|c|c|c|c|c|c|}
\hline $\Delta_h$ & $h$ & $E^{(\ell)}_h$ & $N \times M$ & Memory usage (MB) & CPU Time (s)\\
\hline \multirow{3}{2em}{$\Delta_h^{(1)}$} 
& $1 / 16$ & $2.1438\mathrm{e}{-04}$ & $\phantom{10}46360$ & \phantom{10}91 & $\phantom{100}4.06$\\  \cline{2-6}
& $1 / 32$ & $5.1647\mathrm{e}{-05}$ & $\phantom{1}233585$ & \phantom{1}366 & $\phantom{1}106.48$\\  \cline{2-6}
& $1 / 64$ & $1.3193\mathrm{e}{-05}$ & $1174268$ & 1860 & $2524.66$\\
\hline \multirow{3}{2em}{$\Delta_h^{(2)}$} 
& $1 / 16$ & $1.6205\mathrm{e}{-04}$ & $\phantom{10}22265$ & \phantom{10}64 & $\phantom{100}1.89$\\  \cline{2-6}
& $1 / 32$ & $4.0443\mathrm{e}{-05}$ & $\phantom{1}103435$ & \phantom{1}177 & $\phantom{10}40.87$\\  \cline{2-6}
& $1 / 64$ & $1.0108\mathrm{e}{-05}$ & $\phantom{1}489665$ & \phantom{1}795 & $1016.55$\\
\hline
\end{tabular}
\caption{Values of the convergence errors, $E^{(\ell)}_h$ ($\ell = 1, 2$), the corresponding number of FDM nodes used in the discretisation of $\HHD$, i.e. $N \times M$, the memory usage and the CPU time, corresponding to Algorithm~\ref{alg:theta-heat-hyperbolic} with $\theta = 1$ and $h \in \{\frac{1}{16}, \frac{1}{32}, \frac{1}{64}\}$.}\label{table02}
\end{center}
\end{table}

\begin{figure}[ht]
\centering
\SetFigLayout{3}{2}
\subfigure[$\theta = \frac 1  2$]{
\includegraphics[scale=0.2]
{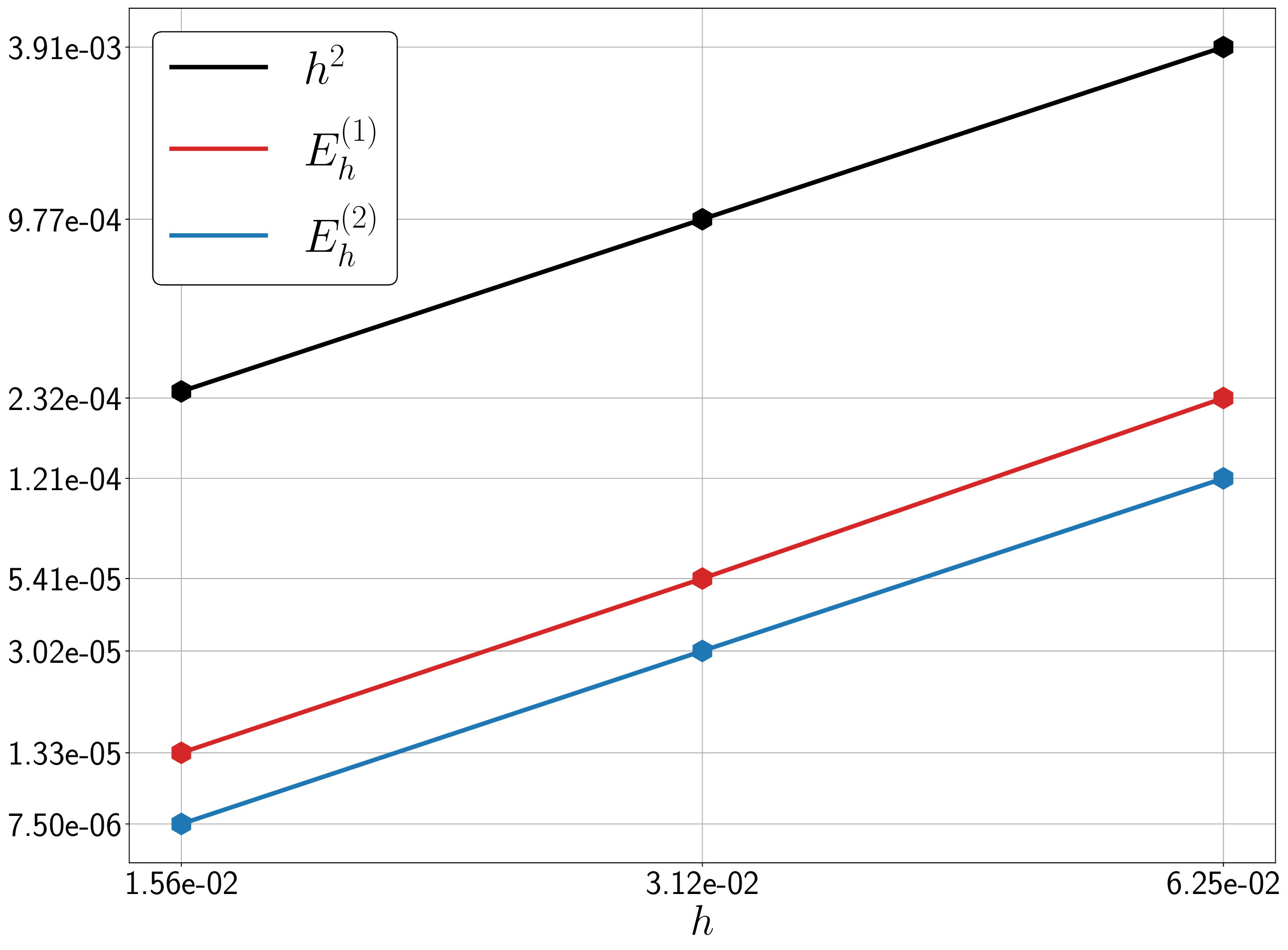}}
\subfigure[$\theta = 1$]{
\includegraphics[scale=0.2]
{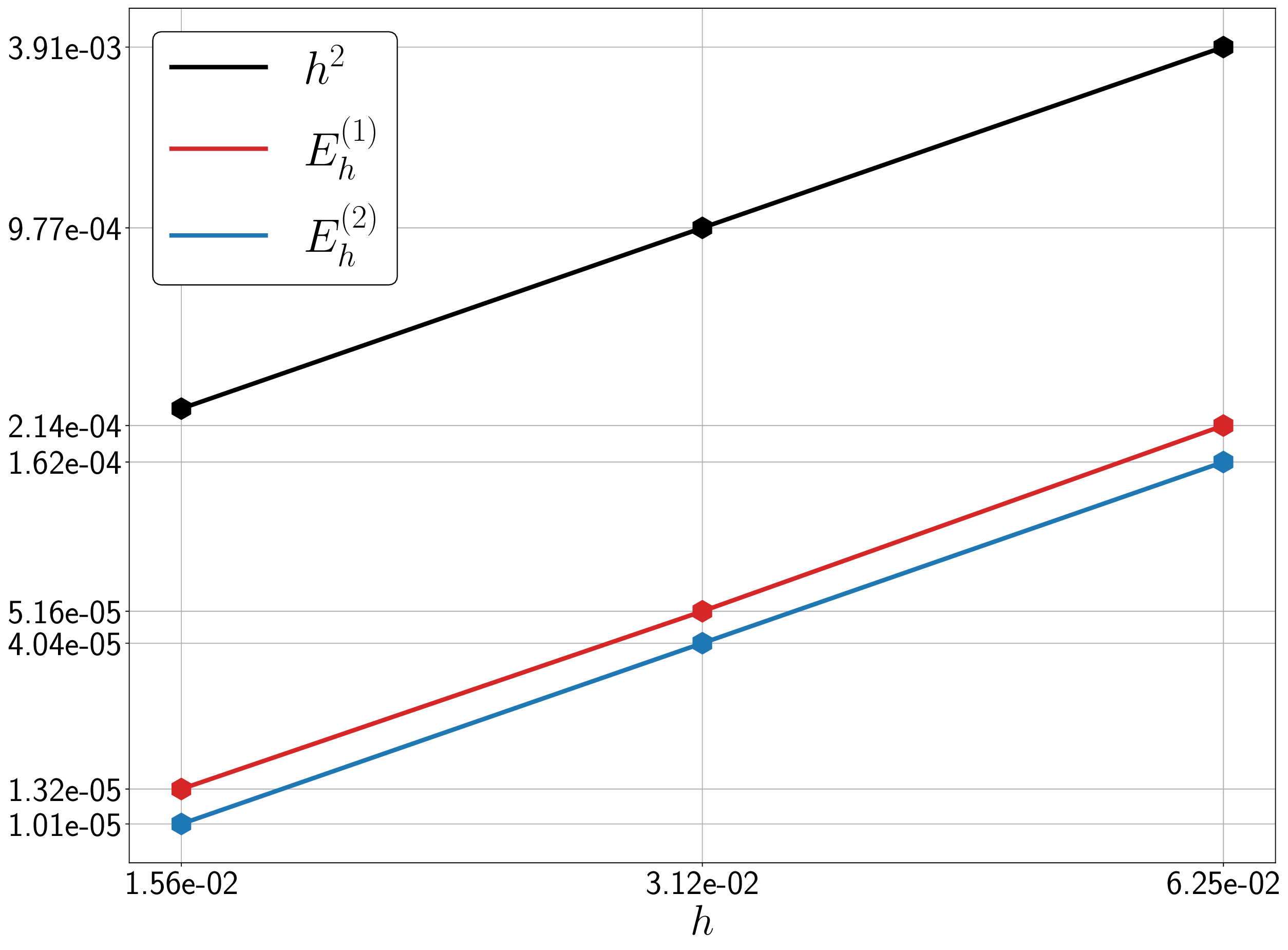}}
\hspace*{0.1cm}
\caption{The $\ell^{2}_{h, D}$-norms of the errors in the numerical approximation yielded by the \textcolor{red}{first} and \textcolor{blue}{second} Laplacian, respectively, as functions of the spatial step size $h$, obtained using Algorithm~\ref{alg:theta-heat-hyperbolic} with (a)~$\theta = \frac 1  2$ and (b)~$\theta = 1$, respectively, for $h \in \{\frac{1}{16}, \frac{1}{32}, \frac{1}{64}\}$.}
\label{fig:Conv-Comparision}
\end{figure}

Further, Figures~\ref{fig:Conv-Comparision} (a) and~(b) display, on a logarithmic scale, the $\ell^{2}_{h, D}$ norms of the errors in the numerical approximation of the exact solution~\eqref{eq:num-heat-solution}, i.e., $E^{(\ell)}_h$, $\ell = 1, 2$, corresponding to the first and second discrete Laplacian, respectively, obtained using Algorithm~\ref{alg:theta-heat-hyperbolic} with $\theta = \frac{1}{2}$ and $\theta = 1$, respectively, as functions of the spatial grid size $h$, as well as the graph of the function $h^2$. It can be seen from this figure that the three graphs mentioned above are parallel, meaning that both numerical FDM discretisations of the Laplacian on $\HH$ provide us with a numerical approximation for the exact solution with a sharp convergence rate of $\mathcal{O}(h^2)$. Moreover, the comparison of the $\ell^{2}_{h, D}$-norms of the errors associated with the first and second discrete Laplace operators reveals that the latter numerical scheme yields a more accurate approximation. This result underscores the effectiveness of utilising a grid specifically adapted to the geometry of $\HH$. Although not presented herein, similar results have been obtained when employing Algorithm~\ref{alg:theta-heat-hyperbolic} for various values for $\theta \in (\frac 1 2, 1]$, confirming the sharp convergence rate of $\mathcal{O}(h^2)$ for any choice of $\theta$.

\subsection{Normalised relative error of the approximation}
To investigate the precision of the pointwise approximation of the proposed algorithm, we define the normalised relative error in the numerically retrieved solution with respect to its analytical counterpart, namely
\begin{align*}
e^{(\ell)}_h
&\coloneqq \dfrac{\big\vert U^{(\ell)}_{h,i,j}
- u^{(\mathrm{ex})}(T, \xi_{i, j}) \big\vert}
{\displaystyle \max_{\y \in \HHD}
\big\vert u^{(\mathrm{ex})}(T,\y) \big\vert},
\quad (i,j)\in Z_1\times Z_2\,, \ell = \overline{1, 2},\, \xi_{i,j}\text{ as in Lemma \ref{lem:approx-projection-with-pointwise}.}
\end{align*}

\begin{figure}[htbp]
\centering
\SetFigLayout{3}{2}
\subfigure[$e^{(1)}_h$: First discrete Laplacian, $h = \frac{1}{ 16}$, $D = 9.52$]{
\includegraphics[scale=0.55]
{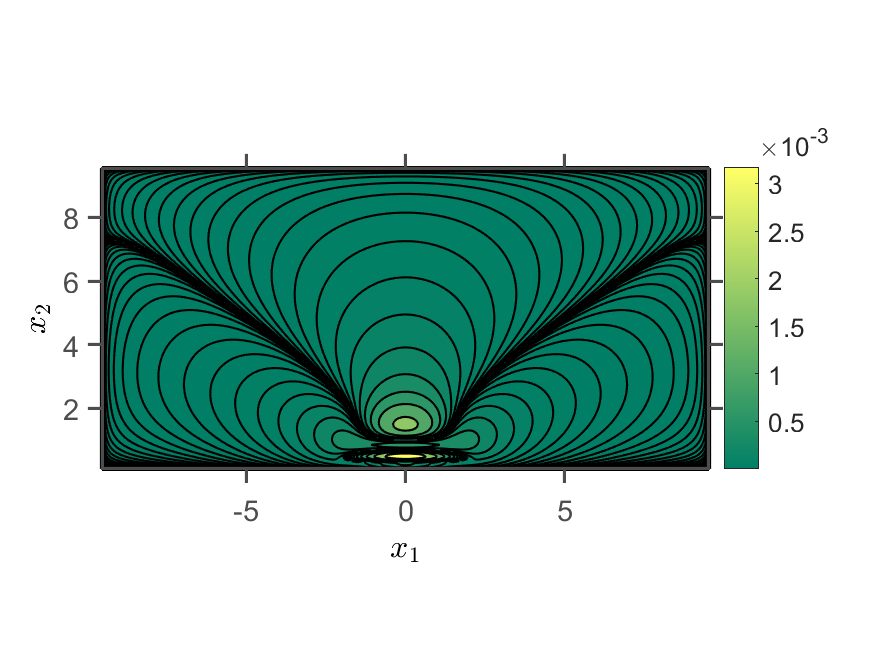}}
\vspace*{0.1cm}
\subfigure[$e^{(2)}_h$ Second discrete Laplacian, $h = \frac{1}{ 16}$, $D = 9.52$.]{
\includegraphics[scale=0.55]
{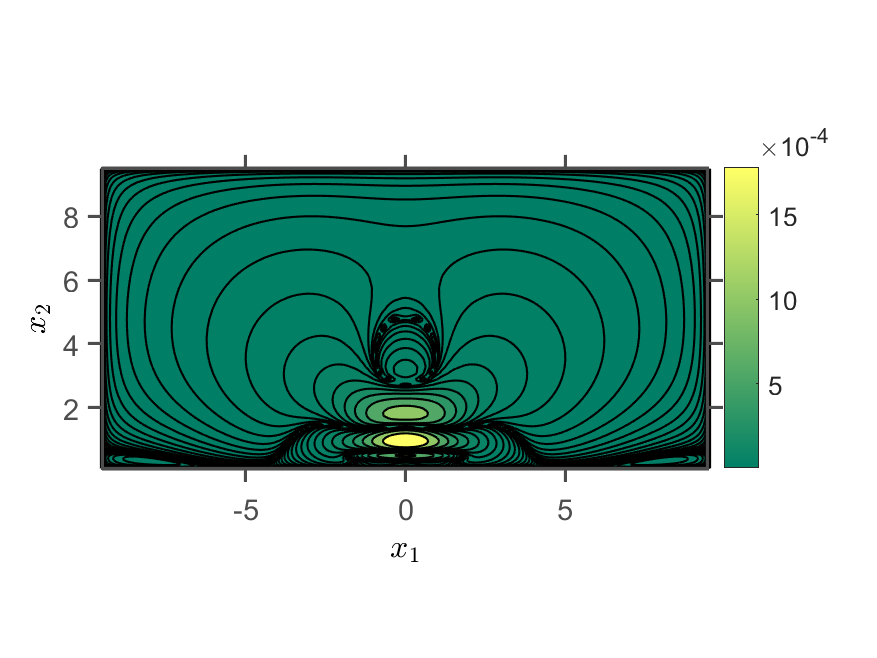}}
\hspace*{0.1cm}
\subfigure[$e^{(1)}_h$: First discrete Laplacian, $h = \frac 1 { 32}$, $D = 10.69$]{
\includegraphics[scale=0.55]
{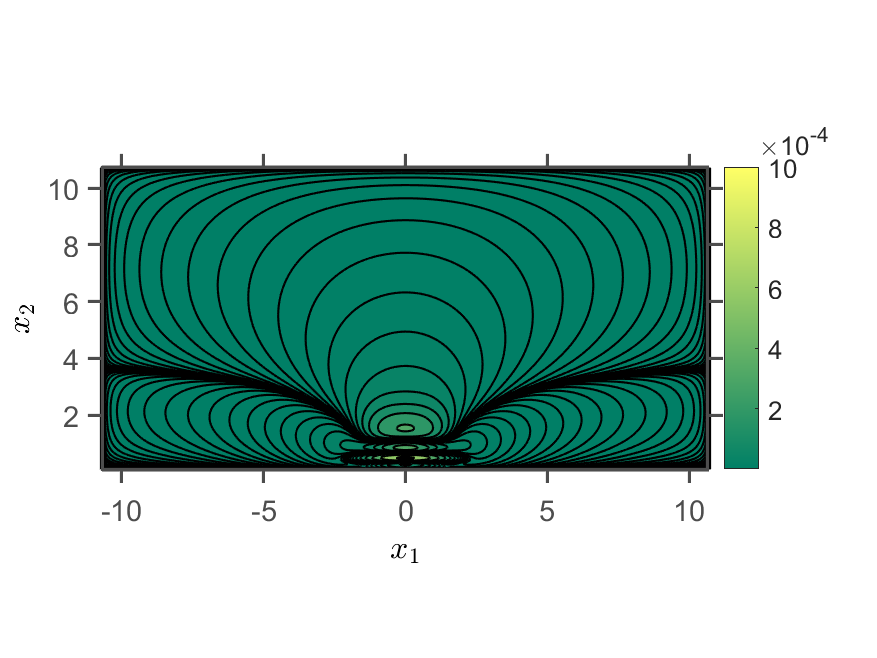}}
\vspace*{0.1cm}
\subfigure[$e^{(2)}_h$: Second discrete Laplacian, $h = \frac 1 { 32}$, $D = 10.69$]{
\includegraphics[scale=0.55]
{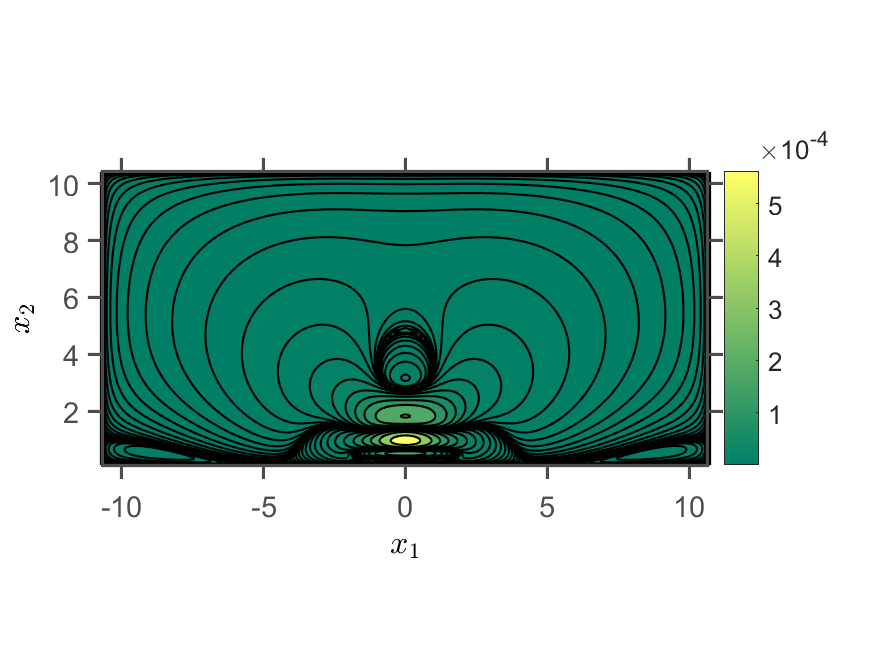}}
\hspace*{0.1cm}
\subfigure[$e^{(1)}_h$: First discrete Laplacian, $h = \frac{1} {64}$, $D = 12$]{
\includegraphics[scale=0.55]
{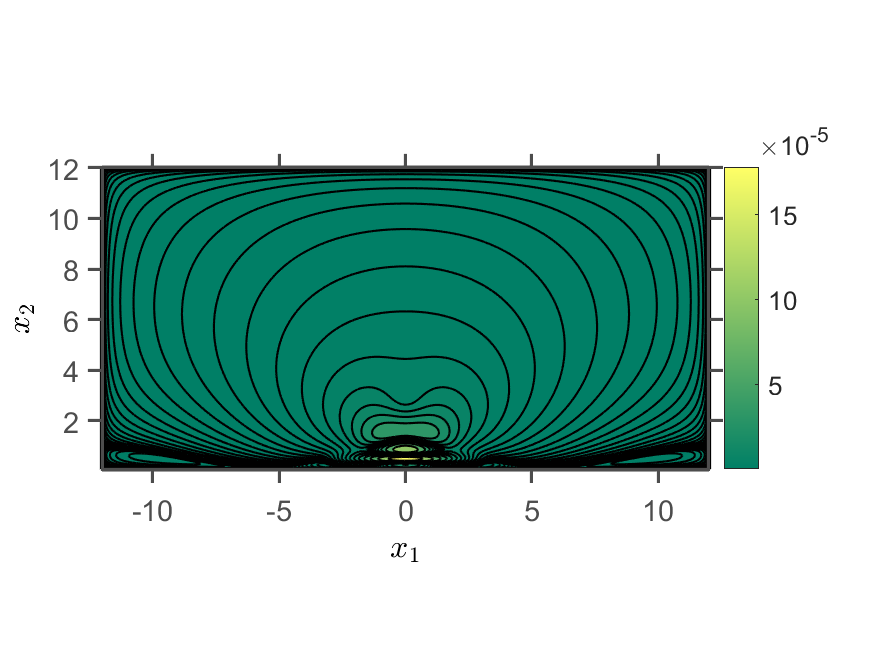}}
\vspace*{0.1cm}
\subfigure[$e^{(2)}_h$: Second discrete Laplacian, $h = \frac 1 { 64}$, $D = 12$]{
\includegraphics[scale=0.55]
{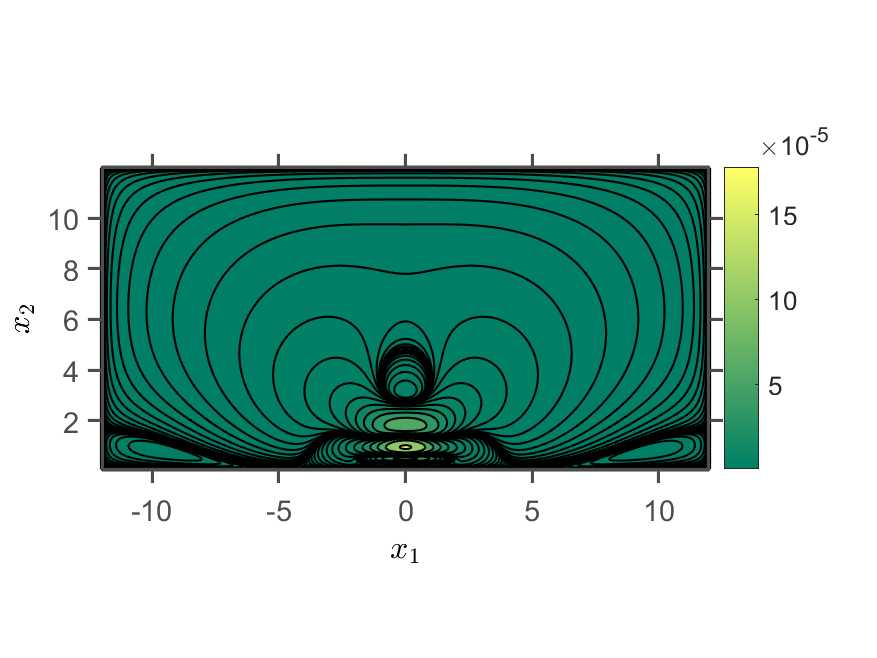}}
\hspace*{0.1cm}
\caption{The normalised relative errors $e^{(1)}_h$, corresponding to the numerical approximation obtained using the first discrete Laplacian are displayed on the left-hand side above, whereas the error terms $e^{(2)}_h$, corresponding to the numerical approximation obtained using the second discrete Laplacian are on the right-hand side. The results were obtained using Algorithm~\ref{alg:theta-heat-hyperbolic} with $\theta = \frac 1 2$ and $h \in \{\frac{1}{16}, \frac{1}{32}, \frac{1}{64}\}$.}
\label{fig:NRE-Comparison}
\end{figure}

Figure~\ref{fig:NRE-Comparison} displays a comparison between the normalised relative errors obtained by employing the first discrete Laplacian in the $\theta$-scheme given by Algorithm~\ref{alg:theta-heat-hyperbolic} corresponding to $\theta = \frac{1}{2}$, illustrated in Figures~\ref{fig:NRE-Comparison}~(a), (c) and (e), and their counterparts obtained via the second discrete Laplacian, depicted in Figures~\ref{fig:NRE-Comparison}~(b), (d) and (f), for various spacial step sizes, $h \in \{\frac{1}{16}, \frac{1}{32}, \frac{1}{64}\}$. From this figures it can be seen that Algorithm~\ref{alg:theta-heat-hyperbolic} provides us with a numerical approximation convergent with respect to refining the finite difference grid for both discrete Laplacians considered, noting how the normalised relative errors $e^{(\ell)}_h$, $\ell = 1, 2$, decrease as $h$ decreases.  Furthermore, by comparing Figures~\ref{fig:NRE-Comparison}~(a) and~(b) with Figures~\ref{fig:NRE-Comparison}~(c) and~(d), as well as with Figures~\ref{fig:NRE-Comparison}~(e) and~(f), it can be noted that the computational domain $\HHD$ increases as the stepsize $h$ decreases.

\subsection{Extension to 3D}
\label{sec:3D}

The finite difference method (FDM) scheme can be naturally extended to higher-dimensional hyperbolic spaces, $\mathbb{H}^n$ for $n \geq 3$. However, the memory and computational costs associated with the first discrete Laplacian increase drastically with the dimension, rendering it impractical for implementation beyond two dimensions. In contrast, the second discretisation remains computationally efficient and memory-friendly, making it well-suited for practical applications in three-dimensional hyperbolic space. The expression of the (continuous) Laplace-Beltrami operator in the half-space model of $\mathbb{H}^3$ is:
\[\Delta_g v(\x) = x_3^2 \Delta_e v(\x) - x_3 \partial_{x_3} v(\x),\text{ for } \x=(x_1,x_2,x_3)\in \mathbb{H}^3 \simeq \RR_+^3.\]
Its discrete counterpart follows by the same method used to deduce the 2D formula \eqref{eq:laplacian-2} and the resulting expression is:
\begin{equation}\label{eq:laplacian-2-3D}
    \begin{aligned} 
     (\Delta^{(2)}_h v^h)_{i,j, k} \coloneqq &\frac{1}{(\rho(h))^2}\Bigg[e^{2kh}(v^h_{i+1,j,k}+v^h_{i-1,j,k}+v^h_{i,j+1,k}+v^h_{i,j-1,k}-4 v^h_{i,j,k}) + \\
    & \left. \left(\frac{2}{e^h+1} - \dfrac{\rho(h)^2}{e^{2h} - 1} \right)v^h_{i,j,k+1} + \left(\frac{2 e^h}{e^h+1}  + \dfrac{\rho(h)^2 e^{2h}}{e^{2h} - 1} \right)v^h_{i,j,k-1} - \left(2 + \rho(h)^2 \right)v^h_{i,j, k}\right].
    \end{aligned}  
\end{equation}

To validate this scheme, we consider a benchmark solution analogous to the one used in the 2D case, but now focusing on a stationary heat problem on $\mathbb{H}^3$ by disregarding the time component. Similarly to the two-dimensional setting, assuming sufficient tail control, we approximate the equation on the whole space with a homogeneous Dirichlet boundary-value problem on a bounded domain 
\[\mathbb{H}^3_D \coloneqq [-D,D]\times[-D,D]\times \left[\frac 1 D,D\right], \,\text{ with }D\text{ as in \eqref{Intro:eq:cine-PLM-i-D}}.\]
Choosing a heat source similar to \eqref{eq:num-heat-source}, we prescribe the following exact solution:
\begin{equation}\label{eq:num-heat-solution-3D}
    v^{(\mathrm{ex})}(\x)
= \mathrm{e}^{- x_1^2 - x_2^2 - x_3^2 - x_3^{-2}}\,,
\quad ~\x \in \mathbb{H}^3\,.
\end{equation}

\noindent This setup enables a direct extension of the numerical experiment performed in the two-dimensional case and provides a basis for evaluating the convergence order of the proposed 3D scheme by evaluating the convergence error corresponding to the $\ell^{2}_{h, D}$-norm, 
\begin{equation}\label{eq:err-elliptic}
E_h^{(2)}\coloneqq \left\| v^{(ex)}(\xi_{i,j,k})-(\Delta_{h,D}^{(2)})^{-1}\left[\Delta_g v^{(ex)}(\xi_{i,j,k})\right]\right\|_{\ell_{h,D}^2}, \quad \xi_{i,j,k}\text{ center of }\CC_{h}^{i,j,k},
\end{equation}
for a varying spacial grid parameter, $h \in \{\frac{1}{8}, \frac{1}{16}, \frac{1}{32}\}$ and $D\coloneqq 2 h^{-\frac{1}{6}}$.

\begin{figure}[ht]
\centering
\subfigure[]{\includegraphics[scale=0.19]
{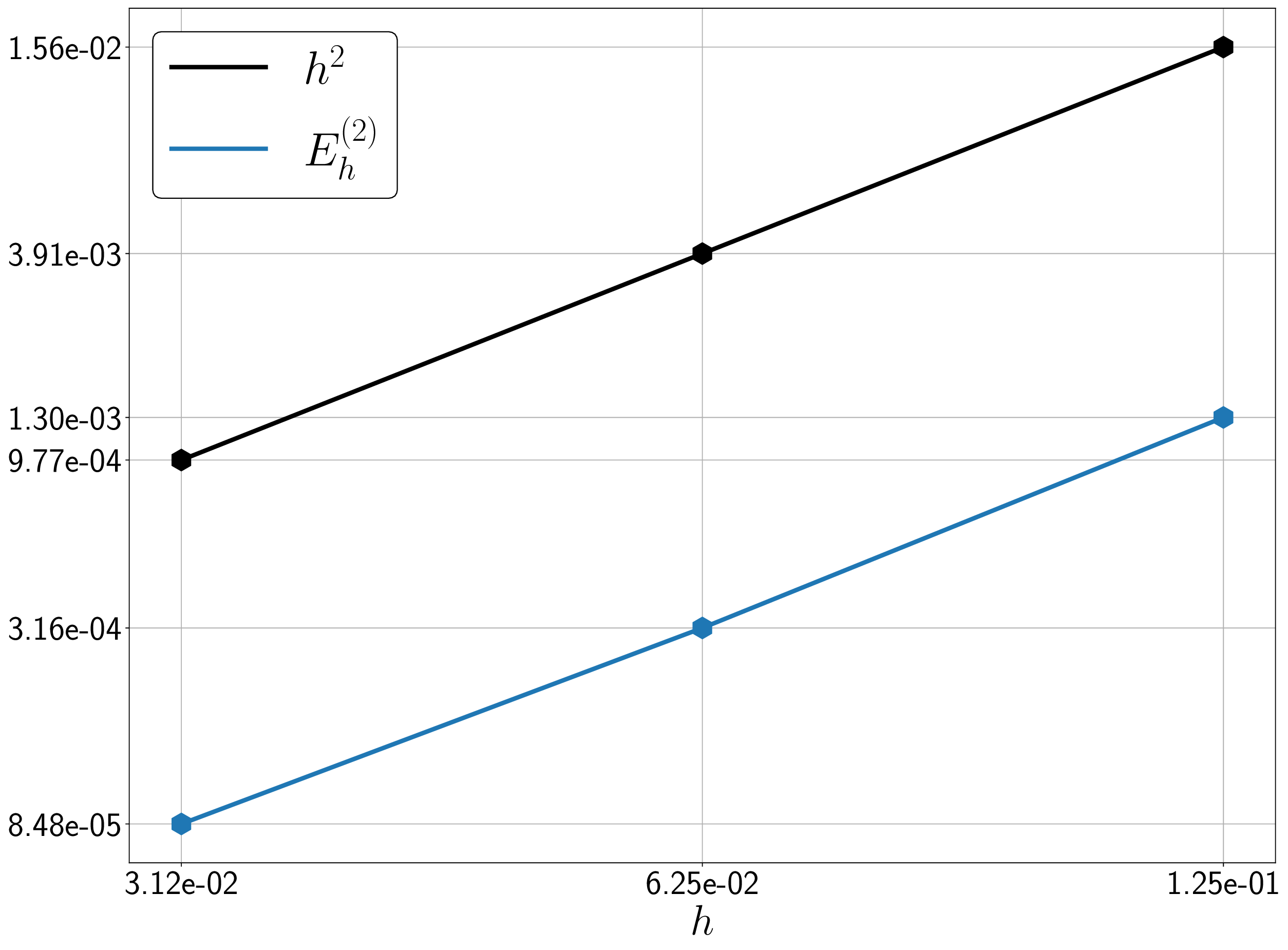}}
\hspace*{0.1cm}
\subfigure[]{\includegraphics[scale=0.3]
{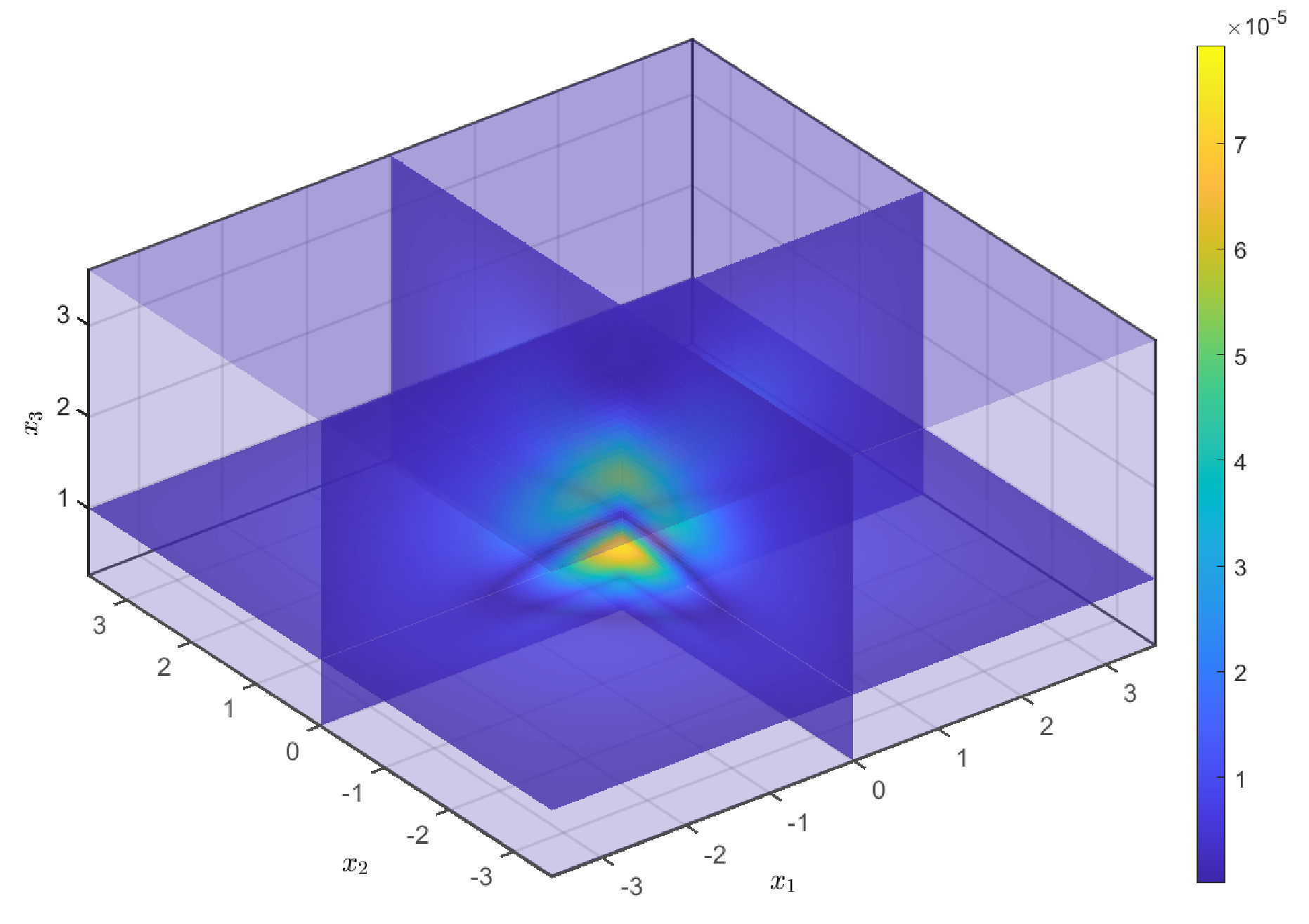}}
\caption{(a) The $\ell^{2}_{h, D}$-norm of the error \eqref{eq:err-elliptic} in the numerical approximation yielded by the 3D discrete Laplacian \eqref{eq:laplacian-2-3D}, as functions of the spatial step size $h$, for the stationary problem heat problem, with $h \in \{\frac{1}{8}, \frac{1}{16}, \frac{1}{32}\}$. (b) The normalised relative error for $h=\frac{1}{32}$, plotted as a heat map in the domain $\mathbb{H}^3_D$.}
\label{fig:Conv-Laplace-3D}
\end{figure}

Figure~\ref{fig:Conv-Laplace-3D}(a) shows, on a logarithmic scale, the $\ell^{2}_{h, D}$ error $E^{(2)}_h$ between the exact solution~\eqref{eq:num-heat-solution-3D} and the one obtained using the FDM scheme associated to the three-dimensional discrete Laplacian \eqref{eq:laplacian-2-3D} applied to the stationary problem. The error is plotted as a function of the spatial grid size $h$, alongside the reference graph of the function $h^2$. The two lines appear parallel, clearly indicating that the numerical scheme exhibits a second-order convergence rate, i.e., $\mathcal{O}(h^2)$. This confirms that the proposed finite difference discretisation technique for the Laplacian on $\mathbb{H}^n$ yields sharp and consistent convergence behavior, even in the three-dimensional setting.

\section{Conclusion and further research directions}
\label{sec:conclusion}

\subsection{Concluding remarks}

In this paper, we have constructed two discrete finite-difference approximations of the Laplace-Beltrami operator on the 2-dimensional hyperbolic space $\HH$, providing us with consistent, stable and thus convergent numerical schemes approximating the heat equation~\eqref{eq:heat-hyperbolic-source}. 

The consistency order of the aforementioned approximations has been proven to be $\mathcal{O}(h^2)$, see Remarks~\ref{rem:consistency-first-laplace-sos} and~\ref{rem:consistency-second-laplace-sos}, for the first and second discrete Laplacians, respectively. By employing a $\theta$-scheme to discretise the time variable, we propose Algorithm~\ref{alg:theta-heat-hyperbolic} to provide a numerical approximation of the solution of~\eqref{eq:heat-hyperbolic-source} with a convergence order of $\mathcal{O}(h^2)$ for both discrete Laplacians, see Theorem~\ref{thm:convergence-theta}. 

The sharpness of this theoretical results is clearly illustrated by the numerical experiments conducted in Section~\ref{sec:numerics}, see Figure~\ref{fig:Conv-Comparision}. Moreover, the suitability of the second discrete Laplace to the geometry of the hyperbolic space is emphasised from both a theoretical and a numerical point of view, see Remark~\ref{rem:second-laplace-greutati} and, Tables~\ref{table01}--\ref{table02}, respectively. This better-performing discrete Laplacian can be effectively generalized to higher dimensions, refer to Section \ref{sec:3D}

It is worth noting that the numerical solution provided by Algorithm~\ref{alg:theta-heat-hyperbolic} is an approximation to the heat equation on the entire space $\HH$. This has been achieved by defining the bounded domain $\HHD$ in terms of the mesh size, $h$, see~\eqref{Intro:eq:cine-PLM-i-HD}, resulting in the enlargement of $\HHD$ with respect to the decrease of $h$, as it can be seen in Figure~\ref{fig:NRE-Comparison}.

Furthermore, both variants of discrete Laplace operator on $\HH$ are strictly negative and their coercivity constants approach $\frac{1}{4}$ (i.e., the Poincar\' e optimal constant in the continuous setting), as $h$ approaches zero, refer to Theorems \ref{thm:first-laplacian-poincare} and  \ref{thm:lapl-2-poincare}.
\subsection{Future work directions}

We consider that this paper opens new research directions related to the problem of constructing a discrete counterpart to the Laplace-Beltrami operator on Riemannian manifolds. Some further research ideas related to the subject of this article are listed below in a synthetic manner.

\begin{enumerate}[I)]
    \item Produce other grids and corresponding discrete Laplacians better suited to the hyperbolic geometry, possibly using other models of the hyperbolic space. This is indeed a challenging problem, see Remark \ref{rem:equidistant-complicated}. 
    \item Study whether the discrete Laplace operator constructed in \cite{BuragoIvanov2014} using embedded graphs does fit into the framework of Sections \ref{sec:semi-discret} and \ref{sec:crac-stix}, in terms of yielding a convergent numerical scheme.
    \item Use the ideas in \cite{YuDeSa2019} to represent more accurately the numerical values of the grid functions in the computer's memory and analyse whether there is a significant improvement in the accuracy of the approximation of \eqref{eq:heat-hyperbolic-source} using the FDM schemes developed in the present paper.
    \item Inspired by the literature pertaining to artificial boundary conditions for the heat equation in unbounded domains of the Euclidean space \cite{WuSun2004,PangYang2017,ZhengXie2023}, produce more accurate artificial conditions at the boundary of the bounded domian $\HHD$ using the explicit solution of the heat equation on $\HH$.
\end{enumerate}

\printbibliography
\end{document}